\theoremstyle{plain} 
\newtheorem{theorem}{Theorem}[section]
\newtheorem{lemma}[theorem]{Lemma}
\newtheorem{proposition}[theorem]{Proposition}
\newtheorem{corollary}[theorem]{Corollary}
\theoremstyle{definition} 
\newtheorem{example}[theorem]{Example}
\newtheorem{definition}[theorem]{Definition}
\newtheorem{remark}[theorem]{Remark}
\newcommand{\End}[1]{\operatorname{\rm End}_{#1}}
\newcommand{\Hom}[1]{\operatorname{{\rm Hom}}_{#1}}
\newcommand{\Ext}[2]{\operatorname{\rm Ext}^{#1}_{#2}}
\newcommand{\add}{\mbox{{\rm add \!}}}
\newcommand{\MOD}{\operatorname{mod}}
\renewcommand{\mod}{\mbox{{\rm mod \!}}}
\newcommand{\proj}{\operatorname{{\rm proj }}}
\newcommand{\perf}{\mbox{{\rm per \!}}}
\newcommand{\Ho}{\mbox{{\rm H}}}
\newcommand{\cB}{\mathcal{B}}
\newcommand{\cC}{\mathcal{C}}
\newcommand{\cG}{\mathcal{G}}
\newcommand{\cM}{\mathcal{M}}
\newcommand{\cP}{\mathcal{P}}
\newcommand{\bP}{\mathbb{P}}
\newcommand{\bS}{\mathbb{S}}
\newcommand{\bZ}{\mathbb{Z}}
\renewcommand{\P}{P^\bullet}
\definecolor{dark-green}{RGB}{14,150,2}
\newcommand{\gpoint}{\color{dark-green}{\circ}}
\newcommand{\rpoint}{\red \bullet}
\newcommand{\LF}{\operatorname{\rm{LF}}}
\newcommand{\MCG}{\operatorname{{\rm MCG}}}
\newcommand{\Diff}{\operatorname{{\rm Diff}}}
\begin{document}
\title[A complete derived invariant for gentle algebras ]{A complete derived invariant for gentle algebras via winding numbers and Arf invariants}

\author{Claire Amiot}
\address{Universit\'e Grenoble Alpes, Institut Fourier, CS 40700, 38058 Grenoble cedex 09}
\email{Claire.Amiot@univ-grenoble-alpes.fr}

\author{Pierre-Guy Plamondon}
\address{Laboratoire de Math\'ematiques d'Orsay, Universit\'e Paris-Sud, CNRS, Universit\'e Paris-Saclay, 91405 Orsay, France}
\email{pierre-guy.plamondon@math.u-psud.fr}

\author{Sibylle Schroll}
\address{Department of Mathematics, University of Leicester, University Road, Leicester LE1 7RH, United Kingdom}
\email{schroll@leicester.ac.uk}

\keywords{}
\thanks{The first and second authors are supported by the French ANR grant SC3A (ANR-15-CE40-0004-01).  
The second author is also supported by a French PEPS JCJC grant.
The third author is supported by the EPSRC through an Early Career Fellowship EP/P016294/1.
}

\date{\today}

\subjclass[2010]{16E35, 
 55M25} 

\begin{abstract}
 Gentle algebras are in bijection with admissible dissections of marked oriented surfaces. In this paper,  we further study the properties of admissible dissections  and we show that silting objects for gentle algebras are given by admissible dissections of the associated surface. We associate to each gentle algebra a line field on the corresponding surface  and prove that the derived equivalence class of the algebra is completely determined by the homotopy class of the line field up to homeomorphism of the surface. Then, based on winding numbers and the Arf invariant of a certain quadratic form over $\mathbb Z_2$, we  translate this to a numerical complete derived invariant for gentle algebras.   
\end{abstract}

\maketitle

\tableofcontents

\section*{Introduction}

Derived categories play an important role in many different areas of mathematics, in particular in  algebra and geometry. However, classifying varieties or algebras up to derived equivalence is, in general, a difficult undertaking. In representation theory, different tools have been developed to handle this problem and have led to tilting theory \cite{tilting}. Even if two derived equivalent algebras share a lot of homological properties, they can be of a very  different nature. Moreover, even for small families of algebras that are  closed under derived equivalence,  it is difficult to establish a complete derived invariant.

In this paper, we give a complete derived invariant for a certain class of algebras called \emph{gentle algebras}. 
6Gentle algebras have been introduced in the early 1980's by Assem and Happel \cite{AssemHappel, AssemHappelErratum} (see also \cite{AssemSkowronski}) and their representation theory is well-studied \cite{ButlerRingel, Krause, Crawley-Boevey}.  Recenlty, gentle algebras have been associated to triangulations or dissections of surfaces, in connection with cluster algebras \cite{AssemBrustleCharbonneauJodoinPlamondon}, with ribbon graphs \cite{Schroll, SchrollCIMPA} or with Fukaya categories of surfaces \cite{HaidenKatzarkovKontsevich}.  It turns out that any gentle algebra can be obtained from a dissection of a surface; this has led to geometric models for their module categories~\cite{BaurCoelho} (building on \cite{CoelhoParsons}) and~$\tau$-tilting theory \cite{PaluPilaudPlamondon2} (see also \cite{BrustleDouvilleMousavandThomasYildirim, PaluPilaudPlamondon}).

From the point of view of homological algebra, the class of gentle algebras is of particular interest, since it is closed under derived equivalence \cite{SchroerZimmermann}.  Their derived categories are well-understood 
\cite{BekkertMerklen, BurbanDrozd2005, ArnesenLakingPauksztello, CanakciPauksztelloSchroll, BurbanDrozd, KalckYang}.  Moreover, in \cite{AvellaAlaminosGeiss}, Avella-Alaminos and Geiss introduced a numerical derived invariant distinguishing between many derived equivalence classes of gentle algebras.  This invariant has sparked a lot of activity on the subject~\cite{Avella-Alaminos, DavidRoeslerSchiffler, AmiotGrimeland, Amiot, Bobinski, Kalck, Nakaoka}.  However, the invariant of Avella-Alaminos and Geiss is not complete, in the sense that it does not distinguish between all derived equivalence classes of gentle algebras.

The derived categories of gentle algebras also enjoy a geometric model enconding their indecomposable objects and the morphisms between them \cite{OpperPlamondonSchroll}.  This model allows for a natural geometric interpretation of the derived invariant of Avella-Alaminos and Geiss.  In this paper, we use this interpretation to refine this invariant into a complete derived invariant for gentle algebras.

\medskip

More precisely, to a gentle algebra, using the geometric description given in \cite{OpperPlamondonSchroll}, one can associate a marked surface $(S,M,P)$ together with a dissection $\Delta$. The data of the dissection $\Delta$ allows to construct a line field $\eta(\Delta^*)$ on the surface $S$. The main result of the paper is then the following.

\begin{theorem}[\ref{theo::derivedInvariants}]\label{theo::intro}
Let $A$ and $A'$ be two gentle algebras associated with dissected surfaces $(S,M,P,\Delta)$ and $(S',M',P',\Delta')$ respectively. Then $A$ and $A'$ are derived equivalent if and only if there exists a homeomorphism of marked surfaces $\Phi:(S,M,P)\to (S',M',P')$ such that the line fields $\Phi^*(\eta(\Delta^*))$ and $\eta(\Delta'^*)$ are homotopic.
\end{theorem}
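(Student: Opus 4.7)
The strategy is to pivot on the following statement, which in a sense is a fixed-surface version of the theorem: for a fixed marked surface $(S,M,P)$, two admissible dissections $\Delta_0$ and $\Delta_1$ of $(S,M,P)$ yield derived equivalent gentle algebras if and only if $\eta(\Delta_0^*)$ and $\eta(\Delta_1^*)$ are homotopic. Granting this pivot, both directions of the theorem are short. For $(\Rightarrow)$, a derived equivalence $\mathcal{D}^b(\mod A)\simeq\mathcal{D}^b(\mod A')$ sends $A$ to a tilting, hence silting, object of $\mathcal{D}^b(\mod A')$ which, via the silting/admissible-dissection bijection announced in the abstract and established earlier in the paper, corresponds to an admissible dissection $\Delta''$ of $(S',M',P')$. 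Its endomorphism algebra is $A$, so by the homeomorphism-uniqueness of the surface model of a gentle algebra there is a homeomorphism $\Phi:(S,M,P)\to(S',M',P')$ with $\Phi(\Delta)$ isotopic to $\Delta''$; applying the pivot to $\Delta''$ and $\Delta'$ on $(S',M',P')$ and pulling back along $\Phi$ yields the desired homotopy. For $(\Leftarrow)$, one pulls $\Delta'$ back along $\Phi$ to a dissection on $(S,M,P)$ whose line field is homotopic to $\eta(\Delta^*)$, and the pivot produces the derived equivalence directly.

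To establish the pivot, I would proceed in three steps. First, match silting mutations of indecomposable summands with flips of admissible dissections; this realizes the mutation graph on silting objects as the flip graph on admissible dissections, and in particular any two dissections in the same flip-connected component give derived equivalent gentle algebras. Second, show that a flip preserves the homotopy class of the associated line field: a flip is supported in a disk containing the two arcs in question, and a direct local inspection of the dual dissection and of $\eta$ on this disk shows that the line fields before and after the flip agree up to homotopy rel boundary. Third and most subtle, prove the converse: if $\eta(\Delta_0^*)\simeq\eta(\Delta_1^*)$, then $\Delta_0$ and $\Delta_1$ are flip-connected. The natural approach here is an induction on the geometric intersection number of $\Delta_0$ and $\Delta_1$, reducing intersections by flipping innermost bigons or triangles cut out on the surface, and using the homotopy of line fields as the global bookkeeping that prevents the induction from stalling in a non-trivial homotopy class.

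The main obstacle is the third step. Steps one and two are essentially local computations that combine the standard combinatorics of silting mutation with the explicit form of $\eta(\Delta^*)$ near a dissecting arc, but step three is a genuinely global topological statement: it asks that the combinatorial equivalence relation on dissections generated by flips coincide with the topological equivalence relation generated by homotopy of the associated line fields. This is precisely where the winding numbers and Arf invariant emphasized in the title should enter, serving as the finite numerical data that classify homotopy classes of line fields on the surface, guide the induction, and certify that no residual topological obstruction remains once all local intersections between $\Delta_0$ and $\Delta_1$ have been eliminated.
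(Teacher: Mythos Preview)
Your reduction to a fixed-surface pivot and your use of the tilting-object/dissection correspondence for $(\Rightarrow)$ match the paper's opening move, but from there the approaches diverge completely: the paper never uses flips or silting mutation. Instead it exploits a direct link between the line-field condition and the existence of gradings turning a dissection into a tilting object. For $(\Leftarrow)$, pull $\Delta'$ back along $\Phi$ to arcs $\tau_1,\ldots,\tau_n$ on $(S,M,P)$ and assign to each $\gpoint$-point $x$ the integer $n(x)=\sum_k w^{\Delta^*}(\tau_{i_k}^{\varepsilon_k})$ along any path of $\tau_i$'s from a basepoint to $x$; the winding-number hypothesis (together with $w^{\Delta'^*}(\tau_i')=0$) is exactly what makes $n(x)$ path-independent, and the resulting gradings satisfy the tilting criterion of Corollary~\ref{coro::tilting-silting}, producing a tilting object with endomorphism ring $A'$. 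For $(\Rightarrow)$, write any closed curve $\delta$ as a concatenation of arcs of the tilting dissection $\Delta_T$; the tilting condition forces adjacent gradings to match, so the arc contributions telescope to zero and $w^{\Delta^*}(\delta)$ reduces to the purely combinatorial left/right count $\ell_T-r_T$, which is manifestly preserved by $\Phi$.

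Your step 3 is a genuine gap, and the sketch you give does not close it. An induction on intersection number has no evident monovariant---a flip removing one crossing can create others---and invoking winding numbers and the Arf invariant here is backwards: in the paper these classify $\MCG$-orbits of line fields (Theorem~\ref{thmLP}) and are used to deduce the numerical criterion \emph{from} the main theorem, not to prove it. Your plan also tacitly needs silting connectedness (that any two tilting objects in $D^b(A')$ are connected by a chain of mutations) to obtain the forward direction of the pivot from steps 1--2; this is not established and is precisely the kind of global statement the paper's direct grading construction is designed to avoid.
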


The idea of associating a line field to a dissected surface comes from
the recent connections of the derived categories of gentle algebras with Fukaya categories of surfaces with boundaries and stops \cite{HaidenKatzarkovKontsevich, LekiliPolishchuk}. In \cite{HaidenKatzarkovKontsevich}, it is shown that the partially wrapped Fukaya category of a surface with stops can be thought of as the derived category of a differential graded gentle algebra (with zero differential). The Fukaya category only depends on the data of the surface with stops and the homotopy class of the line field up to homeomorphism. Hence, one direction of this result can be deduced (in the homologically smooth case) from \cite{HaidenKatzarkovKontsevich}. However, the proof given in the present paper is independent, and does not use the machinery of Fukaya categories. Moreover, it also works for gentle algebras with infinite global dimension, that is, for the non-homologically smooth case. The main ingredient used here is the complete characterisation of silting and tilting objects of the derived category in terms of graded curves (see Theorem~\ref{theo::siltingObjects} and Corollary~\ref{coro::tilting-silting}). This characterisation of silting objects also allows us to give short new proofs of two well-known results on gentle algebras: Namely, of the fact that the class of gentle algebras is closed under derived equivalence (Theorem~\ref{theo::gentle-is-derived-closed}), originally proved in \cite{SchroerZimmermann}, and that gentle algebras are Gorenstein (Theorem~\ref{theo::gentle-is-Gorenstein}), originally proved in \cite{GeissReiten}.

We note that Theorem~\ref{theo::intro} has been independently proved in \cite[Theorem B]{Opper}. Our methods of proof for one of the implications are similar, in that both methods rely on the description of tilting objects. For the other direction, the approaches are different.

However, the main result as stated above is not concretely useful since the computation of the subgroup of homeomorphisms of a marked surface preserving the homotopy class of a given line field is not a realistic task. To make the result more concrete, we use the description of the orbits of homotopy classes of line fields under the action of the mapping class group given in \cite{LekiliPolishchuk}. This allows us to give a numerical derived invariant for gentle algebras which is much easier to compute. This invariant is computed using winding numbers of a basis of the fundamental group of the surface. In the case of gentle algebras coming from surfaces of genus zero, this invariant is precisely equivalent to the Avella-Alaminos-Geiss invariant, while in the case of higher genus, it is a generalisation of it. In genus $\geq 2$ it uses in particular the Arf invariant of some quadratic form over $\mathbb Z/2\mathbb Z$. 

For a dissected marked surface $(S,M,P,\Delta)$ we define simple closed curves $\cG=\{ \alpha_1,\beta_1,\ldots, \alpha_g,\beta_g \} $ and $\cB=\{c_1 ,\ldots,c_{b+p} \} $ as in the following picture:

\[\scalebox{0.8}{\begin{tikzpicture}[scale=0.6, >=stealth]
\shadedraw[top color= blue!30] (-3,0)..controls (-3,1) and (-1,2)..(0,2)..controls (1,2) and (3,1).. (4,1)..controls (5,1) and (7,2).. (8,2)..controls (9,2) and (11,1)..(12,1)..controls (13,1) and (14,2).. (14,3)..controls (14,2.5) and (16,2.5)..(16,3)..controls (16,1) and (18,1)..(18,3)..controls (18,2.5) and (20,2.5).. (20,3)..controls (20,0) and  (18.1,-1)..(18.1,-3).. controls (18.1, -2.9) and (17.9,-2.9).. (17.9,-3).. controls (17.9,0) and (15.1,0).. (15.1,-3).. controls (15.1,-2.9) and (14.9,-2.9).. (14.9,-3).. controls (14.9,-1) and 
 (14,-1)..(12,-1).. controls (11,-1) and (9,-2)..(8,-2).. controls (7,-2) and (5,-1).. (4,-1).. controls (3,-1) and (1,-2).. (0,-2)..controls (-1,-2) and (-3,-1).. (-3,0);

\draw (14,3)..controls (14,3.5) and (16,3.5)..(16,3);
\draw (18,3)..controls (18,3.5) and (20,3.5).. (20,3);

\draw[fill=white] (-1,0)..controls (0,-0.3) and (0,-0.3)..(1,0)..controls (0,0.3) and (0,0.3).. (-1,0);
\draw (-1,0)--(-1.2,0.06);
\draw (1,0)--(1.2,0.06);

\draw[fill=white] (7,0)..controls (8,-0.3) and (8,-0.3)..(9,0)..controls (8,0.3) and (8,0.3).. (7,0);
\draw (7,0)--(6.8,0.06);
\draw (9,0)--(9.2,0.06);

\draw[thick] (0,0) circle (2 and 1);
\draw[thick,->] (2,0.1) --(2,-0.1);
\node at (2.5, 0) {$\alpha_1$};

\draw[thick] (0,-0.2)..controls (0.5,-0.2) and (0.5,-2)..(0,-2);
\draw[thick,<-](0.38,-1.2)--(0.38,-1.4);
\node at (0.8,-1.4) {$\beta_1$};

\draw[thick] (8,0) circle (2 and 1);
\draw[thick,->] (10,0.1) --(10,-0.1);
\node at (10.5, 0) {$\alpha_2$};
\draw[thick] (8,-0.2)..controls (8.5,-0.2) and (8.5,-2)..(8,-2);
\draw[thick,<-](8.38,-1.2)--(8.38,-1.4);
\node at (8.8,-1.4) {$\beta_2$};

\draw[thick] (12,1)..controls (12.5,1) and (12.5,-1)..(12,-1);
\draw[thick,->] (12.4,0.1)--(12.4,-0.1);

\draw[thick] (13.9,2.5)..controls (13.9,2) and (16.1,2)..(16.1,2.5);
\draw[thick,->] (14.9,2.1)--(15.1,2.1);
\node at (15,1.8) {$c_1$};
\node at (15,3) {$\partial_1\Sigma$};

\draw[thick] (17.9,2.5)..controls (17.9,2) and (20,2)..(20,2.5);
\draw[thick,->] (18.9,2.1)--(19.1,2.1);
\node at (19,1.8) {$c_2$};
\node at (19,3) {$\partial_2\Sigma$};

 \draw[thick] (18.1,-3).. controls (18.1,-2.9) and (17.9, -2.9).. (17.9,-3)..controls (17.9,-3.1) and (18.1,-3.1).. (18.1,-3);
  \draw[thick] (15.1,-3).. controls (15.1,-2.9) and (14.9, -2.9).. (14.9,-3)..controls (14.9,-3.1) and (15.1,-3.1).. (15.1,-3);
  
  \node at (18,-1.1) {$c_3$};
  \node at (15,-1.1) {$c_4$};
  
    \draw[thick] (18.5,-1.5)..controls (18.5,-1.3) and (17.6,-1.3)..(17.6,-1.5);
  
  \draw[thick] (15.4,-1.5)..controls (15.4,-1.3) and (14.5,-1.3)..(14.5,-1.5);
 
 \draw[<-](18,-1.35)--(18.3,-1.35);
 \draw[<-](14.9,-1.35)--(15.2,-1.35);

\end{tikzpicture}}\]

It is then enough to compute the winding number with respect to the line field $\eta(\Delta^*)$ (which can be combinatorially  computed using the dissection and its dual) of each curve $\gamma\in \cB\cup \cG$ in order to know the derived equivalence class of the algebra $A$. More precisely we have the following result.

\begin{theorem}[\ref{thm::LP+main}]

Let $A$ and $A'$ be two gentle algebras with associated dissected surfaces  $(S,M,P, \Delta)$ and $(S',M', P', \Delta')$, respectively. Let $\cG=\{\alpha_1,\ldots, \beta_g\}$, $\cB=\{c_1,\ldots,c_{b+p}\}$ (resp. $\cG'=\{\alpha'_1,\ldots, \beta'_{g'}\}$, $\cB'=\{c'_1,\ldots,c'_{b'+p'}\}$ ) subsets of simple closed curves on $S\setminus P$ (resp. $S'\setminus P'$) as before. Then the algebras $A$ and $A'$ are derived equivalent if and only if the following numbers coincide:

\begin{enumerate}

\item $g=g'$, $b=b'$, $\sharp M=\sharp M'$, $\sharp P=\sharp P'$; 

\item there exists a permutation $\sigma\in \mathfrak{S}_{b+p}$ such that $n(\sigma (j))=n'(j)$ and $w^{\Delta^*}(c_{\sigma(j)})=w^{\Delta'^*}(c'_{j})$, for any $j=1,\ldots, b$;
 
 \item for $g=g'\geq 1$ one of the following holds 
 
 \begin{enumerate}
 
 \item for $g=g'=1$,  we have
 
$$\gcd \{w^{\Delta^*}(\gamma),w^{\Delta^*}(c)+2,\gamma\in \cG, c\in \cB\}=\gcd \{w^{\Delta'^*}(\gamma'),w^{\Delta'^*}(c')+2,\gamma'\in \cG', c'\in \cB'\}$$

\item for $g=g'\geq 2$ one the following occurs:
\begin{enumerate}

\item there exist $\gamma\in \cG\cup\cB$ and $\gamma'\in \cG'\cup\cB'$  such that $w^{\Delta^*}(\gamma)$ and $w^{\Delta'^*}(\gamma')$ are odd, or 

\item for any $\gamma\in \cG\cup\cB$ and $\gamma'\in \cG'\cup\cB'$, the numbers $w^{\Delta^*}(\gamma)$ and $w^{\Delta'^*}(\gamma')$ are even and there exists an $i$ with $w^{\Delta^*}(c_i)=0 \ \mod 4$, or

\item for any $\gamma\in \cG\cup\cB$ and $\gamma'\in \cG'\cup\cB'$, the numbers $w^{\Delta^*}(\gamma)$ and $w^{\Delta'^*}(\gamma')$ are even and, for any $i=1,\ldots ,b+p$ we have $w^{\Delta^*}(c_i)=2 \ \mod 4$ and 
$$\sum_{i=1}^g(\frac{1}{2}w^{\Delta^*}(\alpha_i)+1)(\frac{1}{2}w^{\Delta^*}(\beta_i)+1)=\sum_{i=1}^g(\frac{1}{2}w^{\Delta'^*}(\alpha'_i)+1)(\frac{1}{2}w^{\Delta'^*}(\beta'_i)+1) \quad \mod 2$$

\end{enumerate}
\end{enumerate}

\end{enumerate}
\end{theorem}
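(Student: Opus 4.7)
The plan is to combine Theorem~\ref{theo::derivedInvariants} with the description, due to Lekili and Polishchuk, of the orbits of homotopy classes of line fields on a marked surface under the action of the mapping class group $\MCG(S,M,P)$. By Theorem~\ref{theo::derivedInvariants}, $A$ and $A'$ are derived equivalent if and only if there exists a homeomorphism $\Phi:(S,M,P)\to(S',M',P')$ such that $\Phi^*(\eta(\Delta'^*))$ is homotopic to $\eta(\Delta^*)$. Existence of any such $\Phi$ already forces $g=g'$, $b=b'$, $\sharp M=\sharp M'$ and $\sharp P=\sharp P'$, giving condition~(1). Pulling back via $\Phi$, the problem reduces to deciding when two line fields on the same marked surface lie in the same $\MCG(S,M,P)$-orbit.

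I would then invoke the classification of \cite{LekiliPolishchuk}, which describes this orbit space via invariants splitting into two layers: (a) the unordered collection of pairs $(n_j,w(c_j))$ over boundary components and punctures, matching exactly condition~(2) with the permutation $\sigma$ encoding the pairing; and (b) a higher-genus invariant. In genus $0$, (a) alone is complete. In genus $1$, the additional invariant is the gcd of~(3)(a), computed on a basis of $H_1(S\setminus P)$ together with the shifted boundary data $w(c_j)+2$; completeness of this single integer follows from transitivity of the $\MCG$-action on primitive homology vectors with prescribed boundary behaviour. In genus $\geq 2$, the invariant takes the trichotomous form of~(3)(b): either the mod-$2$ reduction $\bar w := w+1 \pmod 2$ takes an odd value on some basis element (case (i)), or it vanishes on every basis element, in which case $q(\gamma) := \tfrac12 w(\gamma)+1 \pmod 2$ descends to a quadratic refinement of the mod-$2$ intersection pairing. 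Cases (ii) and (iii) then separate the subcases $w(c_j)\equiv 0$ and $w(c_j)\equiv 2\pmod 4$ — the latter being the one in which this refinement is defined — and in case (iii) the remaining invariant is the Arf invariant of $q$, computed on a symplectic basis $\{\alpha_i,\beta_i\}$ by $\sum_i q(\alpha_i)q(\beta_i)$, which is the displayed sum.

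The remaining bookkeeping is as follows. The winding number $w^{\Delta^*}(\gamma)$ is a homotopy invariant of the pair (simple closed curve, line field), with the naturality $w^{\Phi^*\eta}(\gamma)=w^\eta(\Phi\circ\gamma)$, so the condition that $\Phi^*\eta(\Delta'^*)$ and $\eta(\Delta^*)$ be homotopic translates into equalities of winding numbers along corresponding curves. Moreover, the curves $\cG=\{\alpha_i,\beta_i\}$ and $\cB=\{c_j\}$ drawn above provide a standard symplectic basis of $H_1(S\setminus P;\mathbb Z)$ together with the boundary cycles, so evaluating $w^{\Delta^*}$ on them gives precisely the input required by the classification above.

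The main obstacle is the genus $\geq 2$ case (3)(b)(iii). One must verify that, under the hypothesis $w(c_j)\equiv 2\pmod 4$ for all $j$, the formula $q(\gamma)=\tfrac12 w(\gamma)+1 \pmod 2$ indeed defines a $\mathbb Z/2$-valued quadratic refinement of the intersection pairing on $H_1(S\setminus P;\mathbb Z/2)$, that its Arf invariant is independent of the chosen symplectic basis, and that under the identification given by $\Phi$ this invariant matches the corresponding Lekili-Polishchuk invariant. Once this is in hand, the rest of the argument is an honest but routine translation between the dissection-based combinatorics and the topological input of the classification.
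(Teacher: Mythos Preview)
Your proposal is correct and follows essentially the same route as the paper: reduce via Theorem~\ref{theo::derivedInvariants} to an $\MCG$-orbit problem for line fields on a fixed marked surface, then invoke the Lekili--Polishchuk classification (Theorem~\ref{thmLP}), with the basis-independence of the genus~$\geq 1$ invariants checked separately. The paper's proof differs only in packaging: it isolates your ``main obstacle'' as a standalone lemma (Lemma~\ref{lemma::quadratic}) establishing that $q(\gamma)=\tfrac12 w^{\Delta^*}(\gamma)+1$ is a genuine quadratic refinement, and it explicitly replaces the homeomorphism $\Phi$ by a diffeomorphism with the same action on $\pi_1$ before applying the $\MCG$ argument.
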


Therefore the invariant is easily computable once we have a good description of the generators of the fundamental groups of the surface associated to the algebras.

\medskip

The plan of the paper is the following. In Section \ref{sect::section1} we recall several basic geometric definitions of line fields and winding numbers, and explain the construction of the line field $\eta(\Delta^*)$. In Section \ref{sect::admissible-dissections-gentle-algebras}, we recall the results of \cite{OpperPlamondonSchroll} that are used in the paper. The description of silting  and tilting objects is done in Section \ref{sect::siltingObjects}, while the main theorem is proved in Section \ref{sect::derivedInvariants}. The concrete criterion using \cite{LekiliPolishchuk} is explained in Section \ref{sect::MCG}. In Section \ref{sect::reprovingKnownResults}, we use the geometric description to reprove some well-known results on gentle algebras. Examples are presented in Section~\ref{sect::examples}. Finally, Section~\ref{sect::surface-cut-algebras} is dedicated to the special case of certain gentle algebras of global dimension~$2$. These algebras, called surface cut algebras, have another, slightly different, geometric model coming from cluster combinatorics, therefore it may be useful to explicitly translate the new invariants in terms of this other model.

\section*{Conventions}
In this paper, all algebras will be assumed to be over a base field $k$.  
All modules over such algebras will be assumed to be finite-dimensional left modules.
Arrows in a quiver are composed from left to right as follows: for arrows $a$ and $b$ we write $ab$ for the path from the source of $a$ to the target of $b$.
Maps are composed from right to left, that is if $f: X \to Y$ and $g: Y \to Z$ then $gf : X \to Z$.

All surfaces with boundary and punctures  in the paper are considered as open surfaces. They are defined by removing closed discs and points from a compact surfaces.  As such, a surface with boundary~$\Sigma$ does not contain its boundary; however, by construction, the boundary exists on a surface containing~$\Sigma$.

\section{Line fields and admissible dissections}\label{sect::section1}

\subsection{Line fields and winding numbers}\label{subs::line-fields-and-winding}
Most of the material of this section is classical geometry. We recall the definitions and basic properties for the convenience of the reader. We refer to \cite{Chillingworth}.

Let $\Sigma$ be a smooth oriented open surface of genus $g$ with $b\neq 0$ boundary components and~$p$ punctures (that is to say,~$\Sigma$ is obtained by removing~$b$ disjoint closed discs and~$p$ distinct points from a compact surface of genus~$g$). We denote by $T\Sigma$ its tangent bundle.

\begin{definition} A \emph{line field} $\eta$ on $\Sigma$ is a continuous section of the projectivized tangent bundle. So it is a continuous map $\eta:\Sigma\to \mathbb P (T\Sigma)$ such that for any $x\in \Sigma$, $\eta (x)$ is in $\mathbb {P}(T_x\Sigma)$. 
\end{definition}

Note that any vector field on $\Sigma$ (i.e. a continuous section of the tangent bundle) yields a line field, but not all line fields  come from vector fields.

For $x \in \Sigma$, define the map~$D:\bP(T\Sigma)\to \bP(T\Sigma)$ by~$D(x,\ell)= (x, \ell^\perp)$,
where~$\ell^\perp$ is the (unique) line orthogonal to~$\ell$ in~$T_x\Sigma$. 
Note that~$D$ is smooth.

Let $\eta$ be a line field on $\Sigma$. Fix $x_0\in \Sigma$ and $v_0\in T_{x_0}\Sigma$ such that $[v_0]=D\circ\eta(x_0)$. Let $f$ be a $\mathcal{C}^1$-map from $\mathbb S^1\subset \mathbb C$ to $\Sigma$ such that $f(1)=x_0$ and $T_{1}f(1)=v_0$. In what follows, we give a definition of the winding number of $f$ relative to the line field $\eta$.  This mainly follows \cite{Chillingworth}, but here the definition is given for line fields instead of vector fields.

The fiber above $x_0$ of the projection $p:\mathbb{P}(T\Sigma)\to \Sigma$ is a circle so we get the following long exact sequence:

\[ \xymatrix{\pi_2(\Sigma,x_0)\ar[r] &\pi_1(\mathbb{S}^1,1)\ar[r]^-{\iota_*} 
& \pi_1(\mathbb{P}(T\Sigma),[v_0])\ar[r]^-{p_*} & \pi_1(\Sigma,x_0)\ar[r] & \pi_0(\mathbb{S}^1,1)=1 }\]

The universal cover $\widetilde{\Sigma}$ of $\Sigma$ is contractible (it is a disk if $g\geq 1$ or $g=0, b\geq 3$ and the plane for $g=0$ and $b=2$) and so $\pi_2(\widetilde{\Sigma},\widetilde{x_0}) = 1$. Thus the isomorphism $\pi_2(\widetilde{\Sigma},\widetilde{x_0})\simeq \pi_2(\Sigma,x_0)$ implies that $\iota_*$ is injective.

Associated to $f$ we define $Z^f\in \pi_1(\mathbb{P}(T\Sigma),[v_0])$ as $$Z^f(z):=[T_zf(1)] \quad\forall z\in \mathbb{S}^1.$$ In other words, for any $z\in \mathbb{S}^1$ the element $Z^f(z)$ is  the tangent line to the curve $f$ at the point $f(z)$. Since $Z^f(z)$ is in $\mathbb{P}(T_{f(z)}\Sigma)$, we have the equality $p_*\{Z^f\}=\{f\}$ in $\pi_1(\Sigma,x_0)$. 

Associated to $f$ and $\eta$ we define $X^{f,\eta}\in \pi_1(\mathbb{P}(T\Sigma),[v_0])$ as 
$$X^{f,\eta}(z):=D\circ\eta\circ f(z)\quad\forall z\in \mathbb{S}^1.$$
Since $\eta\circ f(z)\in \mathbb{P}(T_{f(z)}\Sigma)$, we also have the equality $p_*(\{X^{f,\eta}\})=\{f\}$. 

Hence the element $\{Z^f\}^{-1}\{X^{f,\eta}\}$ is in the kernel of $p_*$, and so has a unique predecessor in $\pi_1(\mathbb{S}^1,1)$. The orientation of $\Sigma$ induces an orientation of each tangent space, hence gives a basis element $e$ of $\pi_1(\mathbb{S}^1,1)$ and a bijection $\pi_1(\mathbb S^1,1)\simeq \mathbb Z$.  This leads to the following.

\begin{definition}\label{defwinding}
The \emph{winding number} $w_{\eta}(f)$ is the unique integer such that $$\{Z^f\}^{-1}\{X^{f,\eta}\}=w_{\eta}(f)\cdot e.$$
\end{definition}

\begin{remark}
\begin{enumerate}

\item It is more common to define the winding number for a curve starting tangentially  to the line field instead of normally to the line field as defined here. We note that  extending the definition to any curve, the two definitions coincide and in this paper, it will be more convenient to consider curves starting and ending normally to the line field.
\item The winding number computes the number of U-turns the $\eta$ line makes relatively to the tangent field of $f$.

\item In the case where the line field $\eta$ comes from a vector field $X$, we have the equality $w_{\eta}(f)=-2\omega_X(f)$ where $\omega_X$ is the winding number defined in \cite{Chillingworth}. 
\end{enumerate}
\end{remark}

The following is proved in \cite{Chillingworth} in the case of a vector field, and can easily be generalized to the case of a line field.
\begin{proposition}\label{propLF}
\begin{enumerate}
\item The map $w_\eta$ factors through a map $\pi_1^{\rm free}(\Sigma)\to \mathbb{Z}$.
      This map sends the free homotopy class of any curve~$\gamma$ that has no contractible loops  to~$w_\eta(\gamma)$.

\item Two line fields $\eta$ and $\eta'$ are homotopic if and only if for any $\gamma\in \pi_1^{\rm free}(\Sigma)$, we have $w_\eta(\gamma)=w_{\eta'}(\gamma).$

\item A line field comes from a vector field if and only if for each $\gamma\in \pi_1^{\rm free}(\Sigma)$, the winding number $w_{\eta}(\gamma)$ is even. 

\item The map $w_\eta$ factors through an element of  $\Ho^1(\Sigma,\mathbb{Z}/2\mathbb{Z})$.

\item Denote by $\LF(\Sigma)$ the set of homotopy classes of line fields on $\Sigma$. Then the map $\Phi: \LF(\Sigma)\times \LF(\Sigma)\to \Hom{\rm Set} (\pi_1^{\rm free}(\Sigma),\mathbb Z)$ defined by $\Phi(\eta,\eta'):=w_{\eta}-w_{\eta'}$ factors through a map 
$$\Phi: \LF(\Sigma)\times \LF(\Sigma) \to \Hom{\mathbb{Z}}(\Ho_1(\Sigma,\mathbb{Z}),\mathbb{Z})
=\Ho^1(\Sigma,\mathbb{Z})$$ making $\LF(\Sigma)$ a $\Ho^1(\Sigma,\mathbb{Z})$-affine space.
\end{enumerate}
\end{proposition}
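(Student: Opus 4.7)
The plan is to adapt Chillingworth's arguments for vector fields to line fields, exploiting the key structural fact that $p: \mathbb{P}(T\Sigma) \to \Sigma$ is a principal $\mathbb{S}^1$-bundle which is trivializable since $T\Sigma$ is trivial (as $\Sigma$ has nonempty boundary, it deformation retracts onto a $1$-complex). Hence $\mathbb{P}(T\Sigma) \simeq \Sigma \times \mathbb{S}^1$ and each line field $\eta$ corresponds to a continuous map $\alpha_\eta: \Sigma \to \mathbb{S}^1$.

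For (1), I would prove invariance of $w_\eta$ under free homotopy directly. Given a smooth free homotopy $H: \mathbb{S}^1 \times [0,1] \to \Sigma$ between $f_0$ and $f_1$ (adjusted so that the normality condition $[T_1 f_t(1)] = D \circ \eta(f_t(1))$ holds at each $t$, by composing with a smooth $1$-parameter family of rotations of $\mathbb{S}^1$), the classes $\{Z^{f_t}\}$ and $\{X^{f_t,\eta}\}$ in $\pi_1(\mathbb{P}(T\Sigma))$ vary continuously; after transport to a common basepoint, their ratio lies in the discrete group $\ker(p_*) \cong \mathbb{Z}$ and is therefore constant in $t$. The vanishing of $w_\eta$ on contractible loops follows from the same argument applied to a null-homotopy.

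For (5), I would use the standard identification $[\Sigma, \mathbb{S}^1] = \Ho^1(\Sigma, \mathbb{Z})$ (since $\mathbb{S}^1 = K(\mathbb{Z},1)$), which immediately endows $\LF(\Sigma)$ with the structure of an $\Ho^1(\Sigma, \mathbb{Z})$-torsor. The identification of $\Phi$ with the torsor action reduces to a direct computation in the trivialization: on any loop $\gamma$, the lifted loop $X^{\gamma, \eta}$ in $\Sigma \times \mathbb{S}^1$ has $\mathbb{S}^1$-winding number $\deg(\alpha_\eta \circ \gamma)$ up to an $\eta$-independent contribution coming from $Z^\gamma$ and $D$, so $w_\eta(\gamma) - w_{\eta'}(\gamma) = \deg\bigl((\alpha_\eta \cdot \alpha_{\eta'}^{-1}) \circ \gamma\bigr)$; this is a group homomorphism on $\pi_1(\Sigma)$ and hence factors through $\Ho_1(\Sigma, \mathbb{Z})$, establishing both the factorization statement and the identification of $\Phi$ with the evaluation pairing. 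Claim (2) then follows at once from the freeness of the torsor action.

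For (4) and (3), I would use that an open surface with nonempty boundary admits a nowhere-zero vector field $X$ (since $T\Sigma$ is trivial), yielding a baseline line field $\eta_0$ for which $w_{\eta_0} = -2\omega_X$ is everywhere even. Combined with (5), this shows that $w_\eta \bmod 2 = (w_\eta - w_{\eta_0}) \bmod 2$ is the mod-$2$ reduction of the class $\Phi(\eta, \eta_0) \in \Ho^1(\Sigma, \mathbb{Z})$, proving (4). For (3), I would observe that the double cover $T\Sigma \setminus 0 \to \mathbb{P}(T\Sigma)$ is classified by a class $\xi \in \Ho^1(\mathbb{P}(T\Sigma), \mathbb{Z}/2)$ that is nontrivial on each fiber, so $\eta$ lifts to a nowhere-zero vector field iff $\eta^*\xi = 0 \in \Ho^1(\Sigma, \mathbb{Z}/2)$; by construction of the double cover, $\eta^*\xi$ evaluated on a loop $\gamma$ is precisely $w_\eta(\gamma) \bmod 2$, which proves the equivalence. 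The main obstacle will be the careful identification in (5) of the winding number difference with the torsor action, which requires tracking the trivialization $\mathbb{P}(T\Sigma) \simeq \Sigma \times \mathbb{S}^1$ together with the operator $D$ simultaneously; once that is done, the remaining statements follow formally.
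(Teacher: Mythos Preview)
The paper does not actually prove this proposition; it simply states that the result is proved in \cite{Chillingworth} for vector fields and that the generalization to line fields is easy. Your proposal is therefore considerably more detailed than what the paper provides, and your overall strategy---using the triviality of $T\Sigma$ to identify $\LF(\Sigma)$ with $[\Sigma,\mathbb{S}^1]=\Ho^1(\Sigma,\mathbb{Z})$ and then deducing (2)--(5) from this torsor structure together with the double cover $T\Sigma\setminus 0\to\mathbb{P}(T\Sigma)$---is the standard and correct approach, essentially Chillingworth's argument adapted to line fields.

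There is, however, a genuine imprecision in your treatment of (1). Your argument establishes invariance under \emph{regular} homotopy (homotopy through immersions), not under arbitrary free homotopy: the lift $Z^{f_t}(z)=[T_z f_t(1)]$ is only defined when $f_t$ is an immersion, so your continuity argument tacitly assumes every intermediate curve is immersed. The winding number is \emph{not} invariant under general free homotopy---inserting a small contractible kink changes $w_\eta$ by $\pm 2$---and in particular $w_\eta$ does not vanish on contractible embedded loops (such a loop has $w_\eta=\pm 2$), contrary to what you assert. The correct content of (1) is that each nontrivial free homotopy class contains a distinguished regular homotopy class of immersions ``without contractible loops'', and the map $\pi_1^{\rm free}(\Sigma)\to\mathbb{Z}$ is \emph{defined} by evaluating $w_\eta$ on such a representative; this is precisely why the statement includes the clause ``any curve $\gamma$ that has no contractible loops''. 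Your arguments for (2)--(5) are unaffected, since the difference $w_\eta-w_{\eta'}$ is genuinely free-homotopy invariant (the kink contributions cancel), which is also why (5) factors through $\Ho_1$ while $w_\eta$ itself does not.
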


We need to extend the definition of winding number to non-closed curves.
Let~$\gamma$ be a smooth map from the open interval~$(0,1)$ to~$\Sigma$
such that, for~$x$ sufficiently close to~$0$ or~$1$, the tangent line at~$\gamma(x)$ is orthogonal to~$\eta(\gamma(x))$.
Let~$s,t\in (0,1)$ be such that~$\gamma$ is orthogonal to~$\eta$ on~$(0,s]$ and~$[t,1)$.
As before, define
\[
 Z^\gamma(z):=[T_z\gamma(1)] \quad\forall z\in [s,t]
\]
and
\[
 X^{\gamma,\eta}(z):=D\circ\eta\circ \gamma(z)\quad\forall z\in [s,t].
\]
Then~$\{(Z^\gamma)^{-1}X^{\gamma,\eta}\}$ is a well-defined element of~$\pi_1(\bP(T\Sigma), D\circ \eta(\gamma(s)))$.
It is in the kernel of~$p_*$, and hence has a unique predecessor in~$\pi_1(\bS^1, 1) \cong \bZ\cdot e$.
\begin{definition}\label{defi::windingOfOpenCurve}
 The \emph{winding number~$w_\eta(\gamma)$} is the unique integer such that
 \[
  \{(Z^\gamma)^{-1}X^{\gamma,\eta}\} = w_\eta(\gamma) \cdot e.
 \]

\end{definition}

The following is an easy consequence of the definition.
\begin{proposition}\label{prop::compose winding}
Let $\gamma_1$ and $\gamma_2$ be two smooth maps from~$(0,1)$ to the surface.  Assume that there are~$t_1, t_2\in (0,1)$ such that~$\gamma_1|_{(t_1, 1)}$ and~$\gamma_2|_{(0,t_2)}$ coincide and are orthogonal to the line field~$\eta$.

Then the concatenation~$\gamma$ of~$\gamma_1$ and~$\gamma_2$ is a well-defined smooth curve, and $w_\eta(\gamma)=w_{\eta}(\gamma_1)+w_{\eta}(\gamma_2).$
\end{proposition}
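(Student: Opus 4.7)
The strategy is to rewrite the loop $\{(Z^\gamma)^{-1}X^{\gamma,\eta}\}$ computing $w_\eta(\gamma)$ as a concatenation of the two analogous loops for $\gamma_1$ and $\gamma_2$ inside $\pi_1(\bP(T\Sigma))$, and then to use the injectivity of $\iota_*$ together with the group structure of $\pi_1(\bS^1,1)\cong \bZ\cdot e$ to turn this concatenation into an addition.

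First, I would fix a parameter value $\tau^*$ lying strictly inside the overlap region $\gamma_1|_{(t_1,1)}=\gamma_2|_{(0,t_2)}$; call its values in the two parameterizations $\tau_1^*\in(t_1,1)$ and $\tau_2^*\in(0,t_2)$. By hypothesis, $\gamma_1(\tau_1^*)=\gamma_2(\tau_2^*)$, this common point is orthogonal to $\eta$, and so is a legal right endpoint for $\gamma_1$ and a legal left endpoint for $\gamma_2$ in Definition~\ref{defi::windingOfOpenCurve}. Using these choices, $w_\eta(\gamma_1)$ is computed from the loop based at $D\circ\eta(\gamma_1(s_1))$ for some $s_1$ near $0$, and $w_\eta(\gamma_2)$ from the loop based at $D\circ\eta(\gamma_2(\tau_2^*))$.

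Next, I would fix a smooth parameterization of the concatenation $\gamma:(0,1)\to\Sigma$ in which the point $\tau^*$ corresponds to a single value $\tau\in(0,1)$, and in which both $\gamma_1$ and $\gamma_2$ are recovered (up to reparameterization of the source) as the restrictions of $\gamma$ to $(0,\tau]$ and $[\tau,1)$, respectively. Choosing $s,t$ as in Definition~\ref{defi::windingOfOpenCurve} for $\gamma$, the paths $Z^\gamma$ and $X^{\gamma,\eta}$ defined on $[s,t]$ both split at $\tau$: up to reparameterization, their restrictions to $[s,\tau]$ recover $Z^{\gamma_1}$ and $X^{\gamma_1,\eta}$, and their restrictions to $[\tau,t]$ recover $Z^{\gamma_2}$ and $X^{\gamma_2,\eta}$. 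Crucially, because $\gamma$ is orthogonal to $\eta$ at $\tau$, one has $Z^\gamma(\tau)=X^{\gamma,\eta}(\tau)=D\circ\eta(\gamma(\tau))$.

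Using the splittings above, I can write $(Z^\gamma)^{-1}X^{\gamma,\eta}$ in $\pi_1(\bP(T\Sigma))$ as a product by inserting the trivial loop $Z^\gamma(\tau)=X^{\gamma,\eta}(\tau)$ in the middle: this is the elementary path identity $A^{-1}B\simeq A^{-1}(CC^{-1})B$, which here exhibits $\{(Z^\gamma)^{-1}X^{\gamma,\eta}\}$ as the concatenation of $\{(Z^{\gamma_1})^{-1}X^{\gamma_1,\eta}\}$ followed by $\{(Z^{\gamma_2})^{-1}X^{\gamma_2,\eta}\}$, conjugated appropriately by the change-of-basepoint path $X^{\gamma,\eta}|_{[s,\tau]}$. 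Since both factors already lie in the kernel of $p_*$, and hence in the image of $\iota_*$, this conjugation is irrelevant for computing the $\iota_*$-predecessor. Because $\iota_*:\pi_1(\bS^1,1)\to\pi_1(\bP(T\Sigma))$ is an injective group homomorphism whose image (the fiber $\pi_1$) is abelian, the $\iota_*$-predecessor of the product is the sum of the individual $\iota_*$-predecessors, giving $w_\eta(\gamma)=w_\eta(\gamma_1)+w_\eta(\gamma_2)$.

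The only mildly subtle step is the basepoint-change when concatenating loops that a priori are based at different points of the fiber; this is handled by restricting to the kernel of $p_*$, where all such basepoint conjugations act trivially on winding numbers.
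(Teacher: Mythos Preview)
Your proposal is correct and is precisely a careful unpacking of what the paper means when it says this is ``an easy consequence of the definition'': the paper gives no further argument. You have supplied the details the paper omits, namely the splitting of $Z^\gamma$ and $X^{\gamma,\eta}$ at a point of the overlap (where orthogonality forces them to agree) and the observation that the resulting factorization in $\ker p_*\cong\pi_1(\bS^1)$ turns concatenation into addition.
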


\subsection{Admissible dissections}

In the following, we define a particular type of marked surface with two distinct types of marked points, $\gpoint$-points and $\rpoint$-points. Going forward, we  will always denote such a surface  by $S$. If we consider a marked surface in general, as we did in Section~\ref{subs::line-fields-and-winding}, then we will denote it by $\Sigma$. 

\begin{definition}\label{defi::markedSurface}
 A \emph{marked surface} is a triple~$(S,M,P)$, where
\begin{itemize}
 \item $S$ is an oriented open smooth surface whose boundary is denoted by~$\partial S$; 
 \item $M = M_{\gpoint} \cup M_{\rpoint}$ is a finite set of marked points on~$\partial S$.  
       The elements of~$M_{\gpoint}$ and~$M_{\rpoint}$ will be represented by symbols~$\gpoint$ and~$\rpoint$, respectively.
       They are required to alternate on each connected component of~$\partial S$, and each such component is required to contain at least one marked point;
 \item $P = P_{\gpoint} \cup P_{\rpoint}$ is a finite set of marked points in~$S$, called \emph{punctures}.
       The elements of~$P_{\gpoint}$ and~$P_{\rpoint}$ will be represented by symbols~$\gpoint$ and~$\rpoint$, respectively,
\end{itemize}
If the surface has empty boundary, then we require that both~$P_{\gpoint}$ and~$P_{\rpoint}$ are non-empty.
\end{definition}

\begin{definition}\label{defi::arcs}
 A \emph{$\gpoint$-arc} (or \emph{$\rpoint$-arc}) is a smooth map~$\gamma$ from the open interval~$(0,1)$
 to~$S\setminus P$   such that its~\emph{endpoints}~$\lim_{x\to 0}\gamma(x)$ and~$\lim_{x\to 1}\gamma(x)$
 are in~$M_{\gpoint}\cup P_{\gpoint}$ (or in~$M_{\rpoint}\cup P_{\rpoint}$, respectively).
 The curve~$\gamma$ is required not to be contractible (at the limit) to a point in~$M_{\gpoint}\cup P_{\gpoint}$
 (or~$M_{\rpoint}\cup P_{\rpoint}$, respectively).
\end{definition}

We will usually consider arcs up to homotopy or isotopy.  
Two arcs are said to intersect if any choice of homotopic representatives intersect.

\begin{definition}\label{defi::admissibleDissection}
 A collection of pairwise non-intersecting and pairwise different $\gpoint$-arcs~$\{\gamma_1, \ldots, \gamma_r\}$ on the surface~$(S,M,P)$ is \emph{admissible} if 
 the arcs~$\gamma_1, \ldots, \gamma_r$ do not enclose a subsurface containing no punctures of~$P_{\rpoint}$ and
 with no boundary segment on its boundary.  
 A maximal admissible collection of~$\gpoint$-arcs is an \emph{admissible $\gpoint$-dissection}.
 
 The notion of~\emph{admissible $\rpoint$-dissection} is defined in a similar way. 
\end{definition}

\begin{example}\label{exam::disection}
The following is an admissible~$\gpoint$-dissection of a disc with one puncture~$\gpoint$ and two punctures~$\rpoint$.
\begin{center}
 \begin{tikzpicture}
  \draw[thick] (0,0) circle (2);
  
  \draw[dark-green] (0,0) -- (0,2);
  \draw[dark-green] (0,0) -- (150:2);
  \draw[dark-green] (0,0) -- (-150:2);
  \draw[dark-green] (0,0) -- (-30:2);
  \draw[dark-green] (150:2) -- (-150:2);
  \draw[dark-green] (30:2) -- (-30:2);
  \draw[dark-green] (30:2) -- (90:2);
  \draw[dark-green] (-90:2) -- (-150:2);
  
  \foreach \i in {0,...,5}
  {
  \filldraw[white] ({60*\i+30}:2) circle (0.1);
  \draw[thick, dark-green] ({60*\i+30}:2) circle (0.1);
  }
  \filldraw[white] (0,0) circle (0.1);
  \draw[thick, dark-green] (0,0) circle (0.1);
  
  \foreach \i in {0,...,5}
  {
  \filldraw[red] ({60*\i}:2) circle (0.1);
  }
  \filldraw[red] (-1,0) circle (0.1);
  \filldraw[red] (1,0) circle (0.1);
 \end{tikzpicture}
\end{center}
\end{example}

\begin{proposition}\label{prop::numberOfArcs}
 Let~$(S,M,P)$ be a marked surface, with~$S$ a surface of genus~$g$ such that~$\partial S$ has~$b$ connected components. 
 Then an admissible collection of~$\gpoint$-arcs is an admissible~$\gpoint$-dissection if and only if it contains exactly
 \( |M_{\gpoint}|+|P|+b+2g-2 \) arcs.  
\end{proposition}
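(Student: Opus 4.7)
My plan is to reduce the statement to the maximal case and then apply an Euler characteristic count. In a first step, I would show that any admissible collection can be extended to a maximal admissible one: since the number of pairwise disjoint, pairwise non-homotopic simple arcs on a compact surface with finitely many marked points is bounded, one can add arcs one at a time, preserving admissibility, until no further $\gpoint$-arc can be added. This reduces the statement to showing that every maximal admissible collection has exactly $|M_\gpoint|+|P|+b+2g-2$ arcs, from which both implications of the proposition follow (the forward direction is the claim itself, while the backward one follows because a non-maximal admissible collection extends strictly).

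The main step is then to analyze the faces of a maximal admissible dissection $\Delta=\{\gamma_1,\ldots,\gamma_r\}$, that is, the connected components of $S\setminus\bigcup_i\gamma_i$. I would argue that maximality forces every face to be of exactly one of two types: either (a) an open disc containing exactly one puncture of $P_\rpoint$ in its interior and no boundary piece of $\partial S$, or (b) an open disc with no $\rpoint$-puncture inside whose boundary contains exactly one boundary piece, itself the concatenation of exactly two boundary segments joined at a single $\rpoint$-marked-point. The justification is that in any other configuration -- positive genus, multiple boundary pieces, several $\rpoint$-punctures inside, a boundary piece containing two or more $\rpoint$-marks, or a $\gpoint$-puncture from $P_\gpoint$ sitting in the interior of a face -- one can produce an explicit $\gpoint$-arc inside the face whose addition preserves admissibility, contradicting maximality. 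In particular, every puncture of $P_\gpoint$ is used as an endpoint of some arc. This gives a face count of $|P_\rpoint|+|M_\rpoint|$.

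For the Euler count, I would build a CW structure on the compact surface $\hat\Sigma$ of genus $g$ with $b$ boundary components (containing all punctures as interior points). The $0$-cells are the $|M|+|P|$ marked points; the $1$-cells are the $|M|$ boundary segments, the $r$ arcs of $\Delta$, and, for each puncture in $P_\rpoint$, one auxiliary arc from it to a $\gpoint$-vertex on the boundary of the type-(a) face containing it (so as to turn each such face into a genuine $2$-cell). The $2$-cells are then the $|P_\rpoint|+|M_\rpoint|$ faces. Using $\chi(\hat\Sigma)=2-2g-b$ together with the alternation identity $|M_\gpoint|=|M_\rpoint|$ on each boundary component, Euler's formula reads
\[
(|M|+|P|)-(|M|+r+|P_\rpoint|)+(|P_\rpoint|+|M_\rpoint|)=2-2g-b,
\]
which simplifies directly to $r=|M_\gpoint|+|P|+b+2g-2$, as desired.

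The main obstacle is the second step, namely the classification of faces in the maximal case. The idea that maximality prevents further subdivision is intuitive, but constructing an admissible subdividing arc in each situation (particularly when a face has very few $\gpoint$-vertices on its boundary, forcing the candidate arc to be a loop at a single vertex) and checking that neither resulting subregion is forbidden requires a careful topological case analysis.
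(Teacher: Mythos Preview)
Your approach is correct and is genuinely different from the paper's. The paper argues by induction on the quantity $|M_\gpoint|+|P|+b+2g-2$: for the base case it identifies the two surfaces where this number vanishes, and for the inductive step it removes an arc from a given admissible collection, collapses it to a point, and tracks how the invariants $g$, $b$, $|M_\gpoint|$, $|P|$ change in each of seven topological cases (separating or not; endpoints on boundary, on punctures, or mixed). Your argument instead proves the face classification first (this is exactly the content of Proposition~\ref{prop::cellularDecomposition}, whose proof in the paper is independent of Proposition~\ref{prop::numberOfArcs}), and then reads off the number of arcs from a single Euler characteristic computation on the compactified surface.

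The trade-off is as follows. The paper's induction is self-contained and does not invoke Euler characteristic, at the cost of a somewhat lengthy case split. Your route is shorter and more conceptual once the face classification is in hand, and it makes transparent \emph{why} the formula takes the form it does (it is literally $\chi=V-E+F$ rearranged). The price is that you must first establish the face classification, which --- as you correctly identify --- requires its own small case analysis to rule out higher genus, multiple $\rpoint$-features, or unused $P_\gpoint$ punctures in a single face. In the paper this classification is deferred to the next proposition; your argument effectively reverses the logical order of Propositions~\ref{prop::numberOfArcs} and~\ref{prop::cellularDecomposition}, which is perfectly legitimate since neither proof in the paper depends on the other.
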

\begin{proof}
 We prove the result by induction on the number~$|M_{\gpoint}|+|P|+b+2g-2$. 
 Note that, by our assumptions on the surface (especially the excluded cases in Definition~\ref{defi::markedSurface}),
 we have that~$|M_{\gpoint}|+|P|+b+2g-2 \geq 0$.
 
 Assume that~$|M_{\gpoint}|+|P|+b+2g-2 = 0$. Then~$g=0$, and either~$b=|M_{\gpoint}|=1$ and~$|P|=0$ or~$b=|M_{\gpoint}|=0$ and~$P=2$.
 In the first case,~$(S,M,P)$ is an unpunctured disc with one marked point~$\gpoint$ and one marked point~$\rpoint$ on its boundary; 
 in the second case,~$(S,M,P)$ is a sphere with empty boundary and one puncture~$\gpoint$ and one puncture~$\rpoint$.
 In both cases, the only admissible collection of~$\gpoint$-arcs is the empty collection. 
 Thus the only admissible~$\gpoint$-dissection is the empty set, and the proposition is true in this case.
 
 Assume now that~$|M_{\gpoint}|+|P|+b+2g-2$ is an integer~$n>0$.   
 Then there exists at least one non-empty admissible collection of~$\gpoint$-arcs on~$(S,M,P)$. 
 Indeed, if~$g>0$, then there exists a non-separating curve starting and ending at a point~$\gpoint$;
 this curve forms an admissible collection of~$\gpoint$-arcs. 
 If~$b>1$, then let~$\gpoint$ be on one of the boundary components.
 The arc starting and ending at~$\gpoint$ which follows the boundary component forms an admissible collection of~$\gpoint$-arcs.
 Finally, if~$g=0$ and~$b=1$, then the surface is a disc with~$|M_{\gpoint}|+|P| \geq 2$.
 If~$|M_{\gpoint}| \geq 2$, then any~$\gpoint$-arc between two distinct points of~$M_{\gpoint}$ forms an admissible collection of~$\gpoint$-arcs.
 If~$|M_{\gpoint}| =1$, then~$|P|\geq 1$.
 If~$|P_{\gpoint}| \geq 1$, then any~$\gpoint$-arc between a point of~$M_{\gpoint}$ and one of~$P_{\gpoint}$ forms an admissible collection of~$\gpoint$-arcs.
 If~$|P_{\rpoint}| \geq 1$, then any~$\gpoint$-arc starting and ending at the point of~$M_{\gpoint}$ 
 and enclosing a point of~$P_{\rpoint}$ forms an admissible collection of~$\gpoint$-arcs.
 
 Let~$\Delta$ be a non-empty admissible collection of~$\gpoint$-arcs.
 Let~$\gamma$ be any arc in~$\Delta$.
 Let~$(S',M',P')$ be the topological quotient of~$(S,M,P)$ obtained by identifying all the points in the image of~$\gamma$.
 We treat several cases.
 
 \smallskip
 
 \noindent \emph{Case 1:~$\gamma$ is a non-separating curve starting and ending on the same boundary component.}  
 In this case,~$(S',M',P')$ is a marked surface with~$|M_{\gpoint}|$ marked points~$\gpoint$,~$|P|$ punctures,
 ~$b+1$ boundary components, and genus~$g-1$.  Induction applies. We illustrate this case in an example, the other cases for non-separating curves being similar.
 
 \begin{figure}[H]
\flushleft \includegraphics[width=0.37\textwidth, height=45px, angle=0]{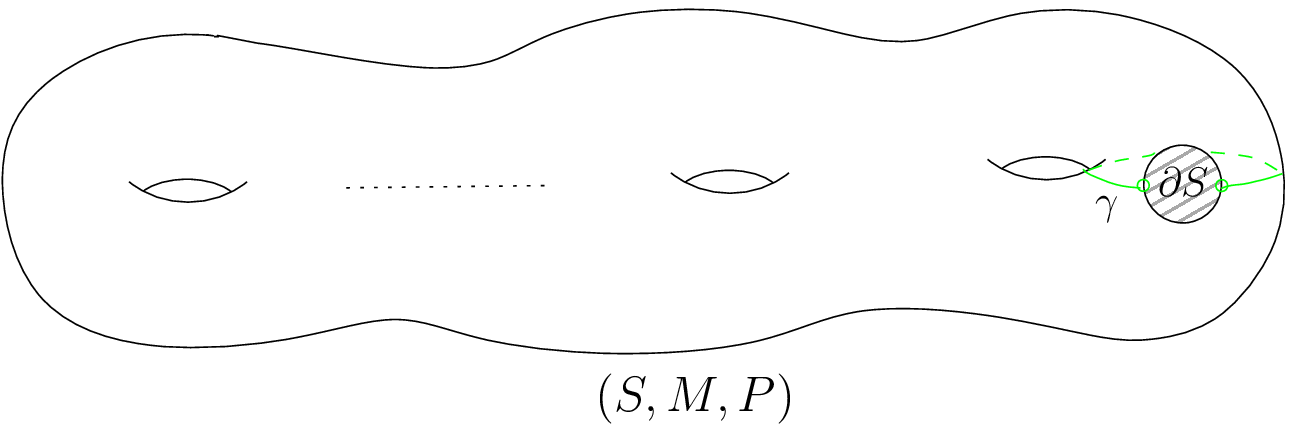}
 
  \vspace{-.1cm}

\centering
  \includegraphics[width=0.37\textwidth, height=56px, angle=0]{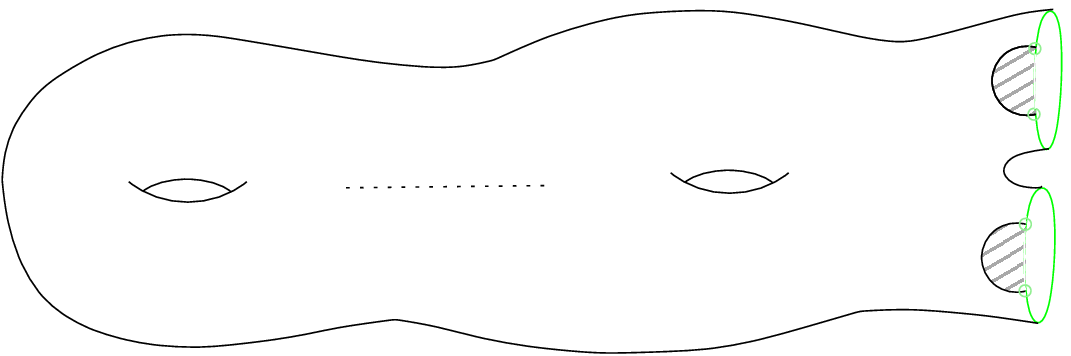}
 
 \vspace{.3cm}
 
 \flushright
 \includegraphics[width=0.37\textwidth, height=45px, angle=0]{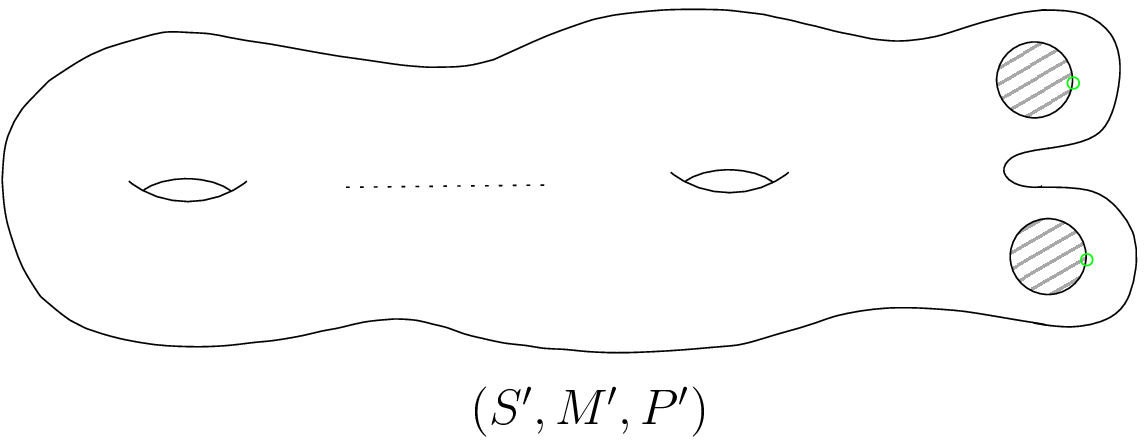}

 \caption*{Example of quotient $(S',M',P')$ of $(S,M,P)$ by a non-separating curve where  $ | M'_{\gpoint} | = | M_{\gpoint} |, b' = b+1$ and $ g' = g-1$.  }
 \end{figure}

 \smallskip
 
 \noindent \emph{Case 2:~$\gamma$ is a non-separating curve starting and ending on the same puncture.} 
 In this case,~$(S',M',P')$ is a marked surface with~$|M_{\gpoint}|$ marked points~$\gpoint$,~$|P|+1$ punctures,
 ~$b$ boundary components, and genus~$g-1$.  Induction applies.
 
 \smallskip
 
 \noindent \emph{Case 3:~$\gamma$ is a non-separating curve starting and ending on two different boundary components.} 
 In this case,~$(S',M',P')$ is a marked surface with~$|M_{\gpoint}|$ marked points~$\gpoint$,~$|P|$ punctures,
 ~$b-1$ boundary components, and genus~$g$.  Induction applies.
 
 \smallskip
 
 \noindent \emph{Case 4:~$\gamma$ is a non-separating curve starting and ending on two different punctures.} 
 In this case,~$(S',M',P')$ is a marked surface with~$|M_{\gpoint}|$ marked points~$\gpoint$,~$|P|-1$ punctures,
 ~$b$ boundary components, and genus~$g$.  Induction applies.

 \smallskip
 
 \noindent \emph{Case 5:~$\gamma$ is a non-separating curve starting on a boundary component and ending on a puncture.} 
 In this case,~$(S',M',P')$ is a marked surface with~$|M_{\gpoint}|$ marked points~$\gpoint$,~$|P|-1$ punctures,
 ~$b$ boundary components, and genus~$g$.  Induction applies.
 
 \smallskip
 
 \noindent \emph{Case 6:~$\gamma$ is a separating curve starting starting and ending on the same boundary component.} 
 In this case,~$(S',M',P')$ is a disjoint union of two marked surfaces, with a total of~$|M_{\gpoint}|$ marked points~$\gpoint$,~$|P|$ punctures,
 ~$b+1$ boundary components, and genus~$g$.  Induction applies (the $-2$ in the formula appears twice now). We again illustrate this case in an example, the other cases for separating curves being similar.
 \vspace{-.5cm}
 \begin{figure}[H]
\flushleft \includegraphics[scale=.45, angle=0]{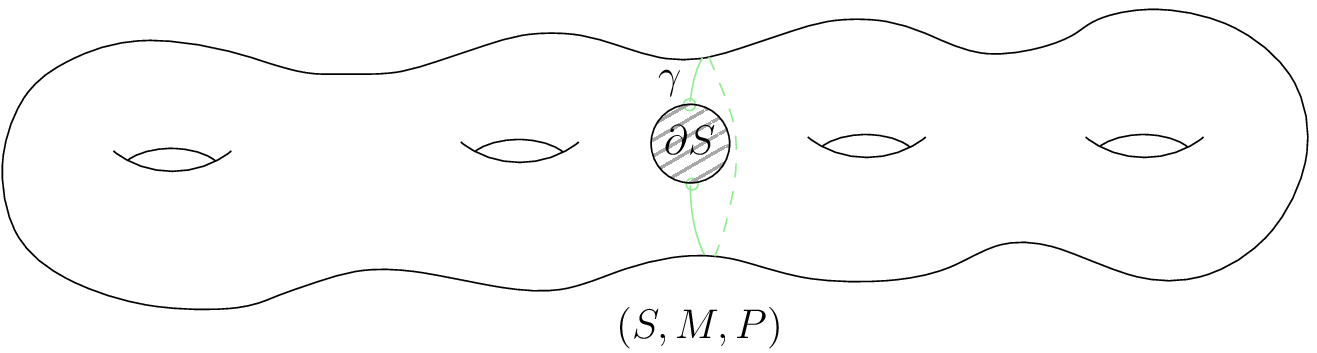}

\vspace{-.5cm} 
\centering \includegraphics[scale=.45,  angle=0]{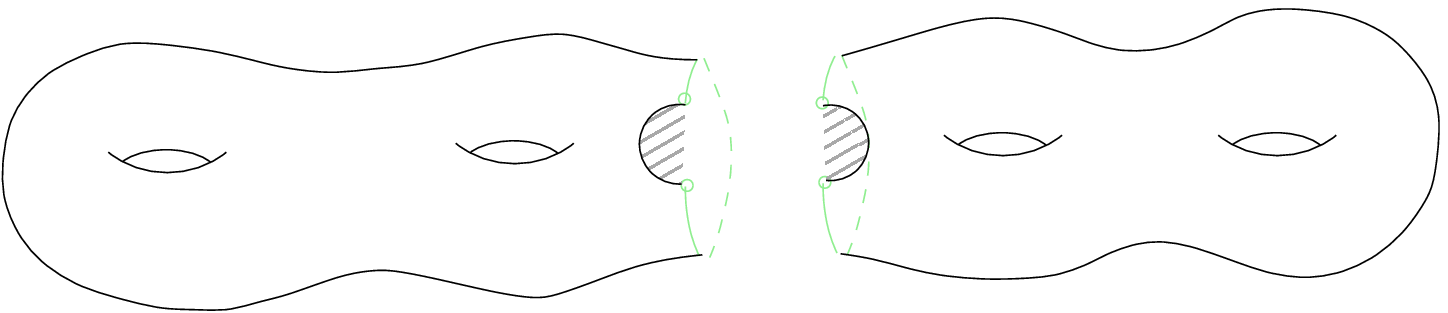}
\vspace{-.5cm}
\flushright \includegraphics[scale=.45, angle=0]{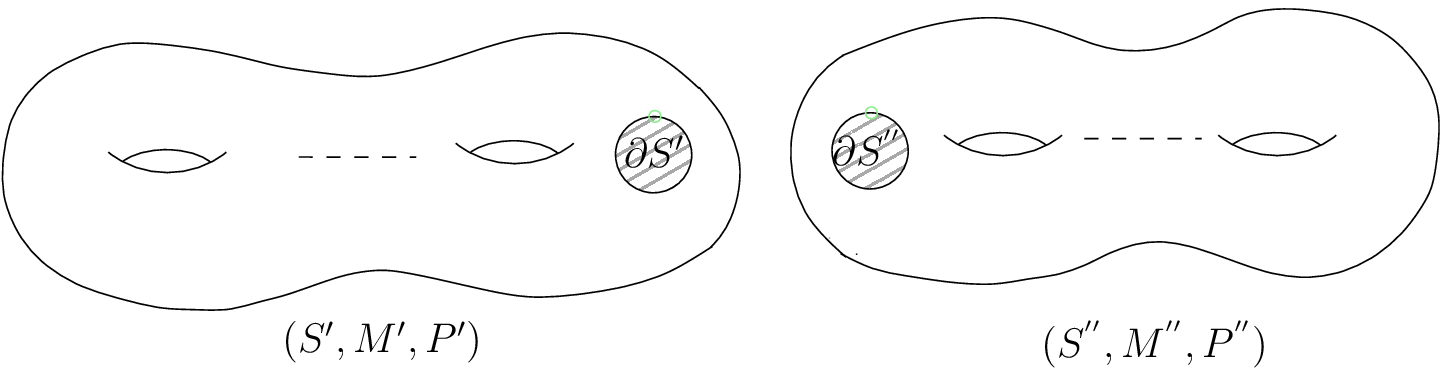}
\caption*{ Example of quotient of $(S, M, P)$ by a separating curve resulting in two disjoint surfaces $(S', M', P')$ and $(S'', M'', P'')$ where $| M'_{\gpoint} | + | M''_{\gpoint} |  = | M_{\gpoint} | -1$,  $b' +b'' = b+1$, $P'+P''=P$ and $ g' = k \leq g$ and  $ g'' = g-k$.  }
\end{figure}

 \smallskip
 
 \noindent \emph{Case 7:~$\gamma$ is a separating curve starting starting and ending on the same puncture.} 
 In this case,~$(S',M',P')$ is a disjoint union of two marked surfaces, with a total of~$|M_{\gpoint}|$ marked points~$\gpoint$,~$|P|+1$ punctures,
 ~$b$ boundary components, and genus~$g$.  Induction applies (the $-2$ in the formula appears twice now).
 
 All cases have been treated.
 \end{proof}

\begin{proposition}\label{prop::cellularDecomposition}
 Let~$\Delta$ be an admissible~$\gpoint$-dissection.  
 Then the complement of the~$\gpoint$-arcs of~$\Delta$ in~$S\setminus P$
 is a disjoint union of subsets homeomorphic to one of the following forms:
 \begin{enumerate}
  \item an open disc with precisely one~$\rpoint$-vertex in its boundary, or
  \item an open punctured disc with no~$\rpoint$-vertices on its boundary, and where the puncture corresponds
        to a~$\rpoint$-vertex in~$P_{\rpoint}$.
 \end{enumerate} 
 The above statement also holds if one permutes the~$\gpoint$ and~$\rpoint$ symbols.
\end{proposition}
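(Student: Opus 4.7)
The plan is to show, using the maximality of $\Delta$, that each connected component $F$ of $S\setminus(P\cup\bigcup\Delta)$ has minimal topological complexity. Let $\bar F$ be the closure of $F$ in the compact surface obtained from $S$ by including $\partial S$ and treating the punctures of $P$ as interior points. The ideal boundary $\bar F\setminus F$ is built from arcs of $\Delta$ (with their $\gpoint$-endpoints), boundary segments of $\partial S$ (which carry alternating $\gpoint$- and $\rpoint$-marked points by Definition~\ref{defi::markedSurface}), and possibly some isolated punctures of $P$. At each step below, I plan to assume that $F$ fails a desired property, construct a new $\gpoint$-arc $\gamma\subset F$, and verify that $\Delta\cup\{\gamma\}$ is still admissible, contradicting maximality. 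A uniform feature I plan to use is that one can always find a $\gpoint$-vertex on $\partial F$ to serve as an endpoint for $\gamma$; the only degenerate configuration where this fails is the sphere with one $\gpoint$- and one $\rpoint$-puncture and $\Delta=\emptyset$, which will be handled directly.

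The case analysis proceeds as follows. First, if $F$ has positive genus, a non-separating simple closed curve in $F$ can be deformed into a non-separating $\gpoint$-arc $\gamma$ whose two endpoints coincide at a $\gpoint$-vertex on $\partial F$. Second, if $F$ meets an interior $\gpoint$-puncture $q\in P_\gpoint$ (i.e., $q$ is an isolated point of $\bar F\setminus F$), connect $q$ to a $\gpoint$-vertex on $\partial F$ by an arc inside $F$. Third, if $\bar F\setminus F$ has more than one connected component, pick $\gpoint$-vertices on two different components and join them by an arc in $F$; this reduces the number of ideal boundary components without disconnecting $F$. Fourth, if $F$ carries two or more $\rpoint$-points (counted across $\rpoint$-punctures in the interior of $F$ and $\rpoint$-vertices of $M_\rpoint$ lying on $\partial F$), draw a $\gpoint$-arc $\gamma\subset F$ with endpoints at $\gpoint$-vertices of $\partial F$ which separates the $\rpoint$-points into two non-empty groups. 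In each case I plan to check that the newly formed regions remain non-bad, so admissibility persists and maximality is contradicted.

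After these reductions, $F$ is a genus-zero open surface with a single ideal boundary component, no interior $\gpoint$-puncture, and at most one $\rpoint$-point in total. Since $\Delta$ is admissible, $F$ itself must be non-bad, and hence contains exactly one $\rpoint$-point. If this point is an $\rpoint$-puncture of $P_\rpoint$, it lies in the interior of $F$ (since $\rpoint$-punctures are never endpoints of $\gpoint$-arcs and are not on $\partial S$), and $F$ is a once-punctured open disc, giving case~(2). Otherwise it is an $\rpoint$-vertex from $M_\rpoint$ lying on a boundary segment of $\partial S$ in $\partial F$, and $F$ is an open disc with exactly one $\rpoint$-vertex on its boundary, giving case~(1). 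The symmetric statement follows by swapping the roles of $\gpoint$ and $\rpoint$.

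The main technical obstacle is to verify, in each step of the case analysis, that the newly added arc $\gamma$ keeps the collection admissible: one must check that every region resulting after adding $\gamma$ either still contains an $\rpoint$-puncture or still touches a boundary segment of $\partial S$. This is what forces the careful choice of endpoints and the separating/non-separating behavior of $\gamma$ described above, and relies on the observation that a non-separating arc, or an arc from an interior puncture to the boundary, leaves the $\rpoint$-content of the ambient region unchanged. Finally, the one degenerate configuration (the closed sphere with exactly one $\gpoint$- and one $\rpoint$-puncture) must be checked by hand: here $F$ is the sphere minus the two punctures, which is indeed a once-punctured open disc with the $\rpoint$-puncture as its puncture and the $\gpoint$-puncture playing the role of the ideal boundary, matching case~(2).
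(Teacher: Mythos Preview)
Your argument is correct and follows the same strategy as the paper's proof: take a connected component of the complement and use maximality of~$\Delta$ to rule out any extra topological complexity by exhibiting an additional admissible $\gpoint$-arc. The paper's version is terser---it first bounds the number of $\rpoint$-points to exactly one and then kills positive genus---and does not explicitly spell out the reductions you perform in your steps~2 and~3 (eliminating interior $\gpoint$-punctures and extra ideal boundary components); your treatment is more careful on these points, but the underlying idea is identical.
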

\begin{proof}
 Let~$\mathcal P$ be a connected component of the complement of the~$\gpoint$-arcs of~$\Delta$ in $S\setminus P$.  By the admissibility condition,~$\mathcal P$ contains at least one~$\rpoint$-puncture or one~$\rpoint$-marked point on its boundary.
 
 If it has two or more, then it is possible to add a~$\gpoint$-arc separating two of them while still satisfying the admissibility condition.  This contradicts the maximality of~$\Delta$.  
 
 Thus~$\mathcal P$ contains exactly one~$\rpoint$-puncture or~$\rpoint$-marked point.  If the genus of~$\mathcal P$ were greater than~$0$, then a non-separating~$\gpoint$-arc could be added to~$\mathcal P$ without violating the admissibility condition, thus contradicting the maximality of~$\Delta$.  Therefore,~$\mathcal P$ has genus~$0$.
 
 Finally, if~$\mathcal P$ has one~$\rpoint$-puncture, then it has no boundary arcs, and thus is of type (2).  Otherwise,~$\mathcal P$ has one~$\rpoint$-marked point in its boundary and no puncture, and it is of type (1). 
\end{proof}

To any admissible~$\gpoint$-dissection, we can associate a dual~$\rpoint$-dissection in the following sense.
 
\begin{proposition}[{\cite[Prop. 1.16]{OpperPlamondonSchroll}} and {\cite[Prop. 3.6]{PaluPilaudPlamondon2}}]\label{prop::dualDissection}
 Let~$(S,M,P)$ be a marked surface, and let~$\Delta$ be an admissible~$\gpoint$-dissection.
 There exists a unique admissible~$\rpoint$-dissection~$\Delta^*$ (up to homotopy) such that 
 each arc of~$\Delta^*$ intersects exactly one arc of~$\Delta$.
\end{proposition}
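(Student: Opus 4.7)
The plan is to construct $\Delta^*$ explicitly using the cellular decomposition of $S \setminus P$ provided by Proposition~\ref{prop::cellularDecomposition}, and then to verify admissibility, maximality, and uniqueness up to homotopy. By that proposition, each connected component $\mathcal{P}$ of the complement of the arcs of $\Delta$ in $S \setminus P$ is either an open disc containing exactly one $\rpoint$-marked point on its boundary, or a once-punctured disc whose puncture is an $\rpoint$-puncture; in either case $\mathcal{P}$ contains a unique distinguished $\rpoint$-vertex~$v_\mathcal{P}$.

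To define $\Delta^*$, for each $\gamma \in \Delta$ consider the two local sides of $\gamma$, which lie in regions $\mathcal{P}_1, \mathcal{P}_2$ (possibly equal) with distinguished $\rpoint$-vertices $v_1, v_2$, and define $\gamma^*$ to be an $\rpoint$-arc from $v_1$ to $v_2$ that crosses $\gamma$ exactly once transversally and whose two halves stay inside $\mathcal{P}_1$ and $\mathcal{P}_2$ respectively. Within each region $\mathcal{P}$, all dual pieces will be drawn simultaneously as mutually disjoint arcs emanating from $v_\mathcal{P}$, one for each occurrence of a $\gpoint$-arc on the boundary of $\mathcal{P}$, using that $\mathcal{P}$ is homeomorphic to a (possibly once-punctured) disc. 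By construction the arcs of $\Delta^*$ will then be pairwise non-intersecting, and each $\gamma^* \in \Delta^*$ will intersect exactly the arc $\gamma \in \Delta$ that it was built to cross.

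It then remains to show that $\Delta^*$ is an admissible $\rpoint$-dissection. By the symmetry of the construction, the cellular decomposition induced by $\Delta^*$ on $S \setminus P$ has regions which each contain exactly one $\gpoint$-vertex, namely the $\gpoint$-vertex of the ``flipped'' region from the $\Delta$-decomposition; this immediately yields admissibility via the symmetric version of Proposition~\ref{prop::cellularDecomposition}. For maximality, we use that $\gpoint$- and $\rpoint$-marked points alternate on each boundary component, so $|M_{\gpoint}| = |M_{\rpoint}|$; combined with Proposition~\ref{prop::numberOfArcs}, this gives $|\Delta^*| = |\Delta| = |M_{\gpoint}| + |P| + b + 2g - 2 = |M_{\rpoint}| + |P| + b + 2g - 2$, which is exactly the size of an admissible $\rpoint$-dissection. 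For uniqueness up to homotopy, any admissible $\rpoint$-dissection $\Delta'$ whose arcs each intersect exactly one arc of $\Delta$ must, by a counting argument, assign to every $\gamma \in \Delta$ a single arc crossing $\gamma$; this arc is necessarily an $\rpoint$-arc between $v_1$ and $v_2$ crossing $\gamma$ once and staying in $\mathcal{P}_1 \cup \mathcal{P}_2$, and such an arc is unique up to isotopy because $\mathcal{P}_1$ and $\mathcal{P}_2$ are (possibly punctured) discs.

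The main subtlety I anticipate is the case where the two sides of a single arc $\gamma \in \Delta$ lie in the \emph{same} region $\mathcal{P}$ (as can happen, for example, for non-separating $\gpoint$-arcs of positive genus or when $\gamma$ has both endpoints on the same boundary component): in such a case one must treat the two occurrences of $\gamma$ on $\partial \mathcal{P}$ as distinct, and construct the two halves of $\gamma^*$ as disjoint arcs from $v_\mathcal{P}$ arriving at the two sides of $\gamma$. A similar care is needed when verifying that no subcollection of the $\gamma^*$ bounds a subsurface containing no $\gpoint$-vertex, but this is handled directly by identifying the regions of the $\Delta^*$-decomposition with those of the $\Delta$-decomposition under the duality.
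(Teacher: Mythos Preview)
The paper does not actually prove this proposition: it is stated with references to \cite[Prop.~1.16]{OpperPlamondonSchroll} and \cite[Prop.~3.6]{PaluPilaudPlamondon2}, and no argument is given in the present paper. Your construction is exactly the standard one used in those references---build $\gamma^*$ by joining the unique $\rpoint$-vertices of the two regions adjacent to~$\gamma$ across~$\gamma$---so in spirit there is nothing to compare.

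Your argument is essentially correct, but two steps are carrying more weight than you acknowledge. First, the phrase ``by the symmetry of the construction'' for the claim that the regions of~$\Delta^*$ are (possibly punctured) discs each containing a unique $\gpoint$-vertex is not a proof; you should actually exhibit the bijection between $\gpoint$-vertices and regions of~$\Delta^*$ by observing that the dual arcs crossing the $\gpoint$-arcs incident to a fixed $\gpoint$-vertex~$w$ form a closed cycle around~$w$, and that this cycle bounds a disc (or a once-punctured disc if $w\in P_{\gpoint}$). Once that is established, admissibility is immediate as you say. Second, in the uniqueness argument your ``counting argument'' only gives a map $\Delta'\to\Delta$; to conclude it is a bijection you need that two distinct arcs of~$\Delta'$ cannot cross the same~$\gamma\in\Delta$, which follows because any two $\rpoint$-arcs crossing~$\gamma$ once and otherwise confined to the adjacent (disc or once-punctured disc) regions are homotopic. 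These are small points and easy to fill in, but as written they are asserted rather than argued.
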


\begin{definition}
 The dissection~$\Delta$ and~$\Delta^*$ are~\emph{dual dissections}.
\end{definition}

\begin{example}
Below is the dissection of Example~\ref{exam::disection} and its dual.
\begin{center}
 \begin{tikzpicture}
  \draw[thick] (0,0) circle (2);
  
  \draw[dark-green] (0,0) -- (0,2);
  \draw[dark-green] (0,0) -- (150:2);
  \draw[dark-green] (0,0) -- (-150:2);
  \draw[dark-green] (0,0) -- (-30:2);
  \draw[dark-green] (150:2) -- (-150:2);
  \draw[dark-green] (30:2) -- (-30:2);
  \draw[dark-green] (30:2) -- (90:2);
  \draw[dark-green] (-90:2) -- (-150:2);
  
  \draw[red] (1,0) -- (0:2);
  \draw[red] (1,0) -- (60:2);
  \draw[red] (1,0) -- (-60:2);
  \draw[red] (1,0) -- (120:2);
  \draw[red] (-1,0) -- (120:2);
  \draw[red] (-1,0) -- (180:2);
  \draw[red] (-1,0) -- (-60:2);
  \draw[red] (-120:2) -- (-60:2);
  
  \foreach \i in {0,...,5}
  {
  \filldraw[white] ({60*\i+30}:2) circle (0.1);
  \draw[thick, dark-green] ({60*\i+30}:2) circle (0.1);
  }
  \filldraw[white] (0,0) circle (0.1);
  \draw[thick, dark-green] (0,0) circle (0.1);
  
  \foreach \i in {0,...,5}
  {
  \filldraw[red] ({60*\i}:2) circle (0.1);
  }
  \filldraw[red] (-1,0) circle (0.1);
  \filldraw[red] (1,0) circle (0.1);
 \end{tikzpicture}
\end{center} 
\end{example} 
 
\subsection{The line field of an admissible dissection}


Let $\Delta$ be an admissible~$\gpoint$-dissection of a smooth marked surface $(S,M,P)$. The aim of this subsection is to associate a (homotopy class of a) line field $\eta(\Delta)$ to $\Delta$. 

In order to do so, we need the following basic lemma.

\begin{lemma}\label{lemma Delta eta}
Let $\cP\subset \mathbb R^2$ be a polygon with smooth oriented sides $\gamma_1,\ldots, \gamma_s$, and vertices $A_1=\gamma_1(0)=\gamma_s (1)$, $\ldots$, $A_s=\gamma_s(0)=\gamma_{s-1}(1)$ that are not necessarily pairwise distinct. Denote by $B_i=\gamma_i(\frac{1}{2})$ for $i=1,\ldots,s$, and let $C$ be a point in the interior of $\cP$.

For $i=1,\ldots, s$ denote by $\alpha_i$ a smooth simple curve in the interior of $\cP\setminus \{C\}$ from $B_i$ to $B_{i+1}$, normal to $\gamma_i$ and $\gamma_{i+1}$ in its endpoints, and so that $C$ is on the left. 

Then we have the following two statements.

\begin{enumerate}
\item There exists a line field $\theta_1$ defined on $\cP\setminus \{A_1,\ldots, A_s, \gamma_s\}$ such that 
\begin{enumerate}
\item $\theta_1$ is tangent to $\gamma_1$, $\ldots$, $\gamma_{s-1}$, and normal to $\gamma_s\setminus\{B_s\}$; 
\item for all $i=1,\ldots,s-1$, we have  $w_{\theta_1}(\alpha_i)=1$.
\end{enumerate}
Moreover such a line field is unique up to homotopy of line fields satisfying (a) and (b).

\item There exists a line field $\theta_2$ defined on $\cP\setminus\{A_1,\ldots,A_s,C\}$ such that
\begin{enumerate}
\item $\theta_2$ is tangent to $\gamma_1,\ldots, \gamma_s$;
\item for all $i=1,\ldots, s$ we have $w_{\theta_2}(\alpha_i)=1$.
\end{enumerate}
Moreover such a line field is unique up to homotopy of line fields satisfying (a) and (b).
\end{enumerate}
\end{lemma}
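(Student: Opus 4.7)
The plan is to reduce each part to the homotopy classification of line fields given by Proposition~\ref{propLF}(5): on an (open) surface, $\LF$ is an affine space over $H^1(-;\bZ)$. For each part I will identify the relevant complement of the distinguished locus in $\cP$, compute its $H^1$, and check that conditions (a) and (b) together determine the homotopy class of the line field uniquely. Existence will then follow by explicit construction combined with local modifications of the line direction that realize the prescribed winding numbers in (b).

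\emph{Part (1).} The set $U_1 := \cP \setminus (\{A_1,\ldots, A_s\} \cup \gamma_s)$ is a closed disc with one boundary arc and finitely many boundary points removed; it is contractible, so $H^1(U_1;\bZ)=0$ and $\LF(U_1)$ has a single element. For existence I would first construct any line field $\theta$ on $U_1$ satisfying (a) (for instance, a smoothing of a foliation by curves transversal to $\gamma_s$ and tangent to each $\gamma_i$, $i\le s-1$, in a tubular neighborhood). Each $w_\theta(\alpha_i)\in \bZ$ is then well-defined by Definition~\ref{defi::windingOfOpenCurve}, because (a) makes $\theta$ orthogonal to $\alpha_i$ at its endpoints. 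To adjust $w_\theta(\alpha_i)$ to $1$, I insert a local half-twist of the line direction supported in a small coordinate disc through an interior point of $\alpha_i$; this shifts $w_\theta(\alpha_i)$ by $\pm 1$ while leaving (a) and all other $w_\theta(\alpha_j)$ untouched. Iterating yields $\theta_1$ satisfying (a) and (b). For uniqueness, given $\theta,\theta'$ satisfying (a) and (b), contractibility of $U_1$ produces a homotopy $\{\theta_t\}_{t\in[0,1]}$ between them as sections of $\bP(TU_1)$; by straightening the homotopy near each $\gamma_i$ I may assume it preserves (a). The winding numbers $w_{\theta_t}(\alpha_i)$ are then continuous integer-valued functions of $t$, hence constant and equal to $1$, so (b) is preserved throughout.

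\emph{Part (2).} The set $U_2 := \cP \setminus \{A_1,\ldots, A_s, C\}$ deformation retracts onto a small loop $\delta$ around $C$, so $H^1(U_2;\bZ)\cong \bZ$ and, by Proposition~\ref{propLF}(5), two line fields on $U_2$ are homotopic iff their winding numbers along $\delta$ coincide. The normality conditions at each $B_i$ guarantee that the concatenation $\alpha := \alpha_1\alpha_2\cdots\alpha_s$ is a smooth closed loop, and since each $\alpha_i$ has $C$ on the left, $\alpha$ is freely homotopic to $\delta$ in $U_2$. By Proposition~\ref{prop::compose winding},
\[
 w_{\theta_2}(\delta) \;=\; w_{\theta_2}(\alpha) \;=\; \sum_{i=1}^s w_{\theta_2}(\alpha_i) \;=\; s,
\]
which together with (a) pins down the homotopy class. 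Uniqueness then follows exactly as in Part (1). For existence, I would build a model line field on a punctured neighborhood of $C$ whose direction rotates by $s\pi$ around any small circle centered at $C$ (standard: in local coordinates $z$, take the foliation obtained by suitably smoothing the kernel of $\operatorname{Im}(z^{-s}\,dz^{\otimes 2})$), extend it smoothly to $\cP \setminus \{C\}$, and then apply the boundary-tangency adjustments and the local half-twists of Part~(1) to enforce (a) and $w_{\theta_2}(\alpha_i)=1$ for $i\le s-1$; the remaining equality $w_{\theta_2}(\alpha_s)=1$ is then automatic by the displayed identity.

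The main obstacle is condition (b): since (a) alone does not determine the integers $w_\theta(\alpha_i)$, one has to verify that local modifications of the line direction can independently shift each $w_\theta(\alpha_i)$ without violating (a) or affecting the other winding numbers, and in Part~(2) that the sum $\sum w_{\theta_2}(\alpha_i)$ is forced to equal the winding of any line field around $C$ with the chosen local model. Both points reduce to a standard local computation of the effect of a compactly supported half-twist on the winding number of a curve passing through its support, which by Proposition~\ref{prop::compose winding} is additive and equal to $\pm 1$.
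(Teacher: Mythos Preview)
Your uniqueness arguments are sound and, for Part~(2), your computation $w_{\theta_2}(\delta)=\sum_i w_{\theta_2}(\alpha_i)=s$ is exactly the right observation. This is in the spirit of the paper's proof, which simply invokes Proposition~\ref{propLF}(2).

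The existence argument, however, has a genuine gap. Your mechanism for adjusting the $w_\theta(\alpha_i)$ is to ``insert a local half-twist of the line direction supported in a small coordinate disc through an interior point of $\alpha_i$''. Any such modification is of the form $\theta' = R_{\phi(x)}\theta$ with $\phi:D\to\mathbb{R}$ compactly supported in an interior disc $D$. But then $\theta_t:=R_{t\phi}\theta$ is a homotopy from $\theta$ to $\theta'$ that fixes $\theta$ outside $D$, in particular near the endpoints of each $\alpha_j$. Since the orthogonality at the endpoints is preserved throughout, $t\mapsto w_{\theta_t}(\alpha_j)$ is a continuous integer-valued function, hence constant. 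So your half-twist changes nothing: $w_{\theta'}(\alpha_j)=w_\theta(\alpha_j)$ for every $j$. (Equivalently: a line field on a disc with prescribed boundary values is unique up to homotopy rel boundary, because $\pi_2(S^1)=0$; there is no nontrivial ``twist'' to insert.) The same remark applies verbatim to the half-twists you invoke in Part~(2).

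The paper sidesteps this entirely: it exhibits the foliations for $\theta_1$ and $\theta_2$ explicitly by picture and reads off (a) and (b). If you want to salvage an adjustment argument, the freedom in the homotopy class rel~(a) is concentrated near the \emph{removed} points (the vertices $A_i$, and $C$ in Part~(2)), not in interior discs; but it is simpler, as the paper does, to write down the model line field and check $w(\alpha_i)=1$ directly.
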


\begin{proof}
The foliations corresponding to $\theta_1$ and $\theta_2$ are drawn in the following pictures. The uniqueness is a direct consequence of Proposition \ref{propLF} (2).

\[\scalebox{1}{
\begin{tikzpicture}[>=stealth,scale=0.6]
\draw[thick, red] (0,0)--(-1,2)--(2,3)--(5,2)--(4,0);
\draw[thick](4,0)--(0,0);
\draw[red,thick,fill=red] (0,0) circle (0.15);
\draw[red,thick,fill=red] (-1,2) circle (0.15);
\draw[red,thick,fill=red] (2,3) circle (0.15);
\draw[red,thick,fill=red] (5,2) circle (0.15);
\draw[red,thick,fill=red] (4,0) circle (0.15);
\draw[dark-green,thick,fill=white] (2,0) circle (0.15);
\node at (-0.5,0) {$A_s$};
\node at (5.5,2) {$A_2$};
\node at (4.5,0) {$A_1$};
\node at (2,-0.5) {$B_s$};

\draw[thick, blue,<-] (-0.5,1)..controls (0,1.25) and (0.6,2)..(0.5,2.5);
\node[blue] at (0.5,1.2) {$\alpha_{s-2}$};

\draw[thick,red](0.5,0)..controls (0,1) and (-0.5,1.5)..(0,2).. controls (0.5,2.2) and (1,2.5)..(2,2.5)..controls (3,2.5) and   (3.5,2.2)..(4,2)..controls (4.5,1.5) and  (4,1)..(3.5,0);

\draw[thick,red] (1,0)..controls (0.5,0.8) and (1,2)..(2,2).. controls (3,2) and (3.5,0.8)..(3,0);

\draw[thick, red] (1.5,0)..controls (1.5,1) and (2.5,1)..(2.5,0);

\node at (2,-1.5) {the line field $\theta_1$};

\begin{scope}[xshift=9cm]
\draw[thick, red] (0,0)--(-1,2)--(2,3)--(5,2)--(4,0)--(0,0);

\draw[red,thick,fill=red] (0,0) circle (0.15);
\draw[red,thick,fill=red] (-1,2) circle (0.15);
\draw[red,thick,fill=red] (2,3) circle (0.15);
\draw[red,thick,fill=red] (5,2) circle (0.15);
\draw[red,thick,fill=red] (4,0) circle (0.15);

\node at (-0.5,0) {$A_s$};
\node at (5.5,2) {$A_2$};
\node at (4.5,0) {$A_1$};
\draw[dark-green,thick,fill=white] (2,1) circle (0.15);
\node[fill=white, inner sep=0pt] at (2.5,1) {$C$};
\draw[thick, red] (2,1) circle (0.8 and 0.4);
\draw[thick,red] (2,1.1) circle (1.2 and 0.6);
\draw[thick,red] (2, 1.2) circle (1.8 and 0.8);

\draw[thick, red](0,0.5)..controls (-0.5,1) and (-0.5,1.5).. (0,2).. controls (0.5,2.5) and (3.5,2.5) .. (4,2)..controls (4.5,1.5) and (4.5,1).. (4,0.5) .. controls (3.5,0) and (0.5,0).. (0,0.5);

\draw[thick, blue,<-] (-0.5,1)..controls (0,1.25) and (0.6,2) ..(0.5,2.5);
\node[blue, fill=white, inner sep=0pt] at (0.5,1.2) {$\alpha_{s-2}$};

\node at (2,-1.5) {the line field $\theta_2$};
\end{scope}

\end{tikzpicture}}\]

\end{proof}

Let $\Delta$ be an admissible dissection.
 The dissection $\Delta^*$ is also an admissible dissection, thus by Proposition~\ref{prop::cellularDecomposition}, the dissection $\Delta^*$ cuts  the surface $S \setminus P$ into a union of polygons with precisely one $\gpoint$ in their boundary  and once-punctured polygons with no $\gpoint$ on their boundary where the puncture corresponds to a $\gpoint$ in $P_{\gpoint}$. We define a line field $\eta(\Delta^*)$ on each of these polygons, following Lemma~\ref{lemma Delta eta}. It is defined as $\theta_1$ if the polygon has no puncture, and it is defined as $\theta_2$ if the polygon has one puncture. Since it is tangent to the sides of each polygon, it defines a line field on the surface $S\setminus P$. It is then unique up to homotopy. 

\begin{remark}
The line field defined above is different from the one defined in \cite{LekiliPolishchuk}. Indeed, the line field considered in \cite{LekiliPolishchuk} is tangent to the~$\gpoint$-dissection.
\end{remark}

We denote by $w^{\Delta^*}$ the winding number of the line field $\eta(\Delta^*)$.  It can be easily computed using the following rule. 

\begin{lemma}\label{lemm::computingWindingNumber}
 Let~$\eta$ be a line field satisfying the conditions in Lemma~\ref{lemma Delta eta}.
 Let~$\gamma$ be a smooth closed curve on~$S \setminus P$.
 Assume that~$\gamma$ intersects the arcs of~$\Delta^*$ orthogonally.
 Let~$t_0, t_1, \ldots, t_n=t_0$ be ordered on~$\bS^1$ so that for each~$i$, 
 $\gamma_i:=\gamma_{|_{(t_i,t_{i+1})}}$ is in one of the polygons or punctured polygons~$P_i$ bounded by the arcs of~$\Delta^*$ (see Proposition~\ref{prop::cellularDecomposition}).
 Assume that each~$\gamma_i$ is simple.
 Let
 \[
  w_i = \begin{cases}
         1 & \textrm{ if the $\gpoint$ is to the left of~$\gamma_i$ in~$P_i$,} \\
         -1 & \textrm{ if the $\gpoint$ is to the right of~$\gamma_i$ in~$P_i$.}
        \end{cases}
 \]
 Then~$w^{\Delta^*}(\gamma) = \sum_{i=0}^{n-1} w_i$.
\end{lemma}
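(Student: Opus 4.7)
The plan is to reduce the global winding number~$w^{\Delta^*}(\gamma)$ to a sum of local winding numbers, one for each polygon~$P_i$ that~$\gamma$ enters, and then to identify each local contribution as~$\pm 1$ using the reference arcs of Lemma~\ref{lemma Delta eta}. For the reduction step, I apply Proposition~\ref{prop::compose winding} iteratively (after opening~$\gamma$ at the point~$\gamma(t_0)$ to view it as an open curve from~$\gamma(t_0)$ to itself), which gives
\[
w^{\Delta^*}(\gamma) \;=\; \sum_{i=0}^{n-1} w_{\eta(\Delta^*)}(\gamma_i).
\]
The compatibility hypothesis of Proposition~\ref{prop::compose winding} at each~$\gamma(t_i)$ is satisfied because~$\gamma$ meets every arc of~$\Delta^*$ orthogonally by assumption, while~$\eta(\Delta^*)$ is tangent to those arcs by construction.

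Next, I would work locally in each~$P_i$. By Proposition~\ref{prop::cellularDecomposition} applied to~$\Delta^*$, the polygon~$P_i$ is either an unpunctured open disc with a single~$\gpoint$-marked point on its boundary, or a once-punctured open disc whose puncture is a~$\gpoint$. In both cases, $P_i$ with the~$\gpoint$ removed has the homotopy type of an annulus, so simple smooth arcs in~$P_i$ from a point on one side of~$\Delta^*$ to another, orthogonal to~$\eta(\Delta^*)$ at both endpoints, fall into exactly two isotopy classes through such arcs, distinguished by which side of the arc contains the~$\gpoint$. Since winding numbers are isotopy invariants (Proposition~\ref{propLF}), $w_{\eta(\Delta^*)}(\gamma_i)$ depends only on this class.

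To compute the value on each class, I pass to a convenient representative. Sliding the endpoints of~$\gamma_i$ along their~$\Delta^*$-sides, which preserves orthogonality because~$\eta(\Delta^*)$ is tangent to those sides, reduces to the case where~$\gamma_i$ connects the midpoints~$M_a$ and~$M_b$ of two sides of~$P_i$. The uniqueness part of Lemma~\ref{lemma Delta eta} then allows me to replace~$\eta(\Delta^*)|_{P_i}$ with the explicit model~$\theta_1$ or~$\theta_2$, and on this model a direct computation of the rotation of~$\eta$ relative to~$T\gamma_i$ yields $w_{\eta(\Delta^*)}(\gamma_i)=1$ whenever the~$\gpoint$ lies on the left of~$\gamma_i$. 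Reversing the orientation of~$\gamma_i$ swaps the side containing the~$\gpoint$ and negates the winding, giving~$-1$ in the opposite case. Summing~$w_i$ over~$i$ then produces the stated formula.

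The main technical subtlety will be the final computation: showing that the local winding along a simple arc from~$M_a$ to~$M_b$ in the standard model is exactly~$\pm 1$, regardless of how many sides of~$P_i$ separate~$s_a$ from~$s_b$. The piecewise concatenation $\alpha_a \cdot \alpha_{a+1} \cdots \alpha_{b-1}$ would contribute $b-a$ to the winding by additivity, so the point is that the simplicity of~$\gamma_i$ forbids the U-turns at the intermediate midpoints~$M_{a+1}, \ldots, M_{b-1}$ present in that concatenation and forces~$\gamma_i$ into one of the two isotopy classes identified above. In each class, the rotations of~$\eta$ and of~$T\gamma_i$ along~$\gamma_i$ differ by exactly one half-turn, arising from the ``inward-to-outward'' reversal of the tangent between the two endpoints, which is the content of the~$\pm 1$ in the formula.
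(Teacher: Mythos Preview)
Your approach is essentially the same as the paper's, which gives only the one-line justification ``The result follows from Proposition~\ref{prop::compose winding}'' and leaves the local identification $w_{\eta(\Delta^*)}(\gamma_i)=\pm 1$ implicit in the construction of Lemma~\ref{lemma Delta eta}. You are in fact more careful than the paper in isolating the subtlety about non-adjacent sides; one small correction is that Proposition~\ref{prop::compose winding} does \emph{not} apply to the concatenation $\alpha_a\alpha_{a+1}\cdots\alpha_{b-1}$ (consecutive $\alpha_j$'s meet at $B_{j+1}$ with opposite tangent directions, so the overlap hypothesis fails), but this does not affect your main argument, which correctly rests on the direct rotation computation in the concentric-circle model.
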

\begin{proof} 
  The result follows from Proposition \ref{prop::compose winding}.
\end{proof}

\begin{remark}
 Any~$\gpoint$-arc is homotopic to a~$\gpoint$-arc which is orthogonal to the line field~$\eta$ of Lemma~\ref{lemma Delta eta} near its endpoints.
\end{remark}

For any pair of~$\gpoint$-arcs~$\gamma$ and~$\delta$ such that the ending point of~$\gamma$ is the starting point of~$\delta$,
    define their concatenation~$\gamma\delta$ as follows:
        let~$u,v\in(0,1)$ be such that~$\gamma$ is orthogonal to~$\eta$ on~$[u,1)$
        and~$\delta$ is orthogonal to~$\eta$ on~$(0,v]$.
        Let~$\gamma_0$ and~$\delta_0$ be the parts of~$\gamma$ and~$\delta$ defined on~$(0,u]$ and~$[v,1)$, respectively.
        Let~$\varepsilon$ be a simple curve that smoothly joins~$\gamma(u)$ to~$\delta(v)$. 
        Then the concatenation~$\gamma\delta$ is defined to be the concatenation of paths~$\gamma_0\varepsilon\delta_0$.

\begin{proposition}\label{prop::degreesOfArcs}
 Let~$(S,M,P)$ be a marked surface and let~$\Delta^*$ be an admissible~$\rpoint$-dissection.
 Let~$\gamma$ be a~$\gpoint$-arc or closed curve.
 \begin{enumerate}
  \item We have that~$w^{\Delta^*}(\gamma^{-1}) = -w^{\Delta^*}(\gamma)$.
  \item The integer~$w^{\Delta^*}(\gamma)$ only depends on the regular homotopy class of~$\gamma$ (where a \emph{regular homotopy} is a homotopy for which all intermediate curves are smooth).
  \item Assume that~$\gamma$ is a~$\gpoint$-arc, and let~$\delta$ be another~$\gpoint$-arc whose starting point is the ending point of~$\gamma$.      
        Then
        \[
         w^{\Delta^*}(\gamma\delta) = w^{\Delta^*}(\gamma) + w^{\Delta^*}(\delta) + \varepsilon,
        \]
        where
        \[
         \varepsilon = \begin{cases} 1 & \textrm{if the ending point of~$\gamma$ lies to the left of~$\gamma\delta$;} \\
                                     -1 & \textrm{if the ending point of~$\gamma$ lies to the right of~$\gamma\delta$.} \end{cases}
        \]

  \item Assume that~$\gamma$ is a~$\gpoint$-arc whose starting point and ending point are the same, and let~$\mathring{\gamma}$ be the corresponding closed curve.
        Then
        \[
         w^{\Delta^*}(\mathring{\gamma}) = w^{\Delta^*}(\gamma) + \varepsilon,
        \]
        where
        \[
         \varepsilon = \begin{cases} 1 & \textrm{if the ending point of~$\gamma$ lies to the left of~$\mathring{\gamma}$;} \\
                                     -1 & \textrm{if the ending point of~$\gamma$ lies to the right of~$\mathring{\gamma}$.} \end{cases}
        \]

  \item Assume that~$\gamma_1, \ldots, \gamma_b$ are simple closed curves that enclose a compact subsurface~$S'$ of~$S$, so that the~$\gamma_i$ are the boundary components of~$S'$ and~$S'$ has genus~$g'$.  Assume that the~$\gamma_i$ are oriented in such a way that~$S'$ lies to the right of each~$\gamma_i$.  Then
  \[
   \sum_{i=1}^b w^{\Delta^*}(\gamma_i) = 4-2b-4g'.
  \]

 \end{enumerate}
\end{proposition}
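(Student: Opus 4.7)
Parts (1) and (2) are essentially immediate. For (1), I would use Lemma~\ref{lemm::computingWindingNumber}: reversing the orientation of $\gamma$ traverses each region $P_i$ in the opposite direction, so the $\gpoint$ that was on the left is now on the right, negating every $w_i$. Alternatively, one can work directly from Definition~\ref{defi::windingOfOpenCurve}, noting that $Z^{\gamma^{-1}}$ is the inverse of $Z^\gamma$ and $X^{\gamma^{-1},\eta}$ is the inverse of $X^{\gamma,\eta}$. For (2), a regular homotopy $\gamma_t$ lifts continuously to a homotopy of the tangent lifts $Z^{\gamma_t}$ in $\pi_1(\bP(T\Sigma))$, while $X^{\gamma_t,\eta}$ varies continuously by construction; the class $\{(Z^\gamma)^{-1}X^{\gamma,\eta}\}$ is therefore preserved, and hence so is its unique predecessor in $\pi_1(\bS^1,1)\cong\bZ\cdot e$.

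For (3), I would work locally near the common endpoint $p=\gamma(1)=\delta(0)$. Choose nested coordinate discs $D'\subset D$ around $p$ so that $\gamma$ and $\delta$ each enter $D'$ along a single arc orthogonal to $\eta$. Write $\gamma=\gamma_0\cdot\gamma_1$ and $\delta=\delta_1\cdot\delta_0$ where $\gamma_0,\delta_0$ lie outside $D'$ and $\gamma_1,\delta_1$ lie inside. The smooth concatenation $\gamma\delta$ is obtained by replacing $\gamma_1\delta_1$ by a bypass arc $\beta$ around $p$ staying in $D$. By Proposition~\ref{prop::compose winding}, this yields
\[
 w(\gamma\delta)-w(\gamma)-w(\delta)=w(\beta)-w(\gamma_1)-w(\delta_1).
\]
The right-hand side is a purely local quantity in $D$: the loop $\gamma_1^{-1}\beta\delta_1^{-1}$ is a simple closed curve around $p$, so its winding number equals $\pm 1$ (it makes exactly one half-turn of the line field relative to the tangent, the sign being determined by which side of $\gamma\delta$ contains $p$). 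Part (4) is the same local argument applied to a single curve: the two tails of $\gamma$ near $p$ are replaced by a single bypass $\beta$, and the correction $w(\beta)-w(\gamma_{\mathrm{tails}})$ is again $\pm 1$ according to the side.

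For (5), the strategy is two-step. First, by Proposition~\ref{propLF}(5), the difference $w_\eta-w_{\eta'}$ between any two line fields factors through $H_1(S,\bZ)\to\bZ$. Since the $\gamma_i$ together bound the compact surface $S'$, the cycle $\sum_i[\gamma_i]$ is null-homologous in $S$, and therefore $\sum_i w_\eta(\gamma_i)$ does not depend on the choice of line field $\eta$. Second, I would compute the sum for a conveniently chosen line field. Since $S$ is an open surface, it admits a nowhere-vanishing vector field $X$, whose induced line field $\eta_X$ satisfies $w_{\eta_X}=-2\omega_X$ (see the remark after Definition~\ref{defwinding}). Applying the classical Chillingworth formula (Poincaré--Hopf with boundary) to $X$, which has no singularities in~$S'$, gives $\sum_i\omega_X(\gamma_i)=\chi(S')$ with the $\gamma_i$ oriented so $S'$ lies on the left; reversing to our convention (with $S'$ on the right) introduces a sign, and multiplying by $-2$ yields
\[
 \sum_{i=1}^b w^{\Delta^*}(\gamma_i)=2\chi(S')=4-2b-4g'.
\]

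The routine parts are (1) and (2); the real work is in the local analysis of (3)--(4), where keeping track of the sign $\varepsilon$ requires a careful picture at $p$. The main obstacle I anticipate is (5): ensuring the sign conventions agree between Chillingworth's vector-field winding number, the line-field winding of this paper, and the orientation convention on $\partial S'$. An alternative (more self-contained) route for (5) is induction on $(g',b)$: cutting $S'$ along a properly embedded simple arc reduces the topology while preserving the quantity $4-2b-4g'$, and part (3) controls how the boundary winding transforms under the cut.
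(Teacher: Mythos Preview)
Your proposal is correct and aligns with the paper's own proof, which is extremely terse: it simply says that (1)--(4) ``directly follow from the definitions and Lemma~\ref{lemm::computingWindingNumber} adapted to arcs,'' and that (5) ``is a re-statement in terms of line fields of \cite[Lemma 5.7]{Chillingworth}.'' You have essentially unpacked what those citations mean.

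A minor remark on (3)--(4): your local bypass argument works, but the route the paper has in mind is more direct. Using the arc version of Lemma~\ref{lemm::computingWindingNumber}, the winding number of a~$\gpoint$-arc is the signed count of polygon-crossings; when you form the smooth concatenation~$\gamma\delta$, all the segments of~$\gamma$ and~$\delta$ persist, and exactly one new segment appears---the one in the polygon containing the common endpoint~$p$, where~$p$ is the~$\gpoint$ of that polygon. Its contribution is precisely~$\varepsilon$. This avoids the loop~$\gamma_1^{-1}\beta\delta_1^{-1}$ and the associated corner-smoothing bookkeeping.

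For (5), your two-step reduction (first use Proposition~\ref{propLF}(5) and null-homology of~$\sum[\gamma_i]$ to get independence of the line field, then compute for a nowhere-vanishing vector field via Poincar\'e--Hopf) is exactly the translation that the phrase ``re-statement in terms of line fields'' encodes, and it is the right way to connect to Chillingworth's vector-field statement. Your caution about sign conventions is well placed; once those are pinned down the computation~$2\chi(S')=4-2b-4g'$ goes through as you wrote.
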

\begin{proof}
 Points~(1)-(4) directly follow from the definitions and Lemma~\ref{lemm::computingWindingNumber} adapted to arcs. Point~(5) is a re-statement in terms of line fields of \cite[Lemma 5.7]{Chillingworth}. 

\end{proof}

\section{Admissible dissections, gentle algebras and derived categories}\label{sect::admissible-dissections-gentle-algebras}

\subsection{The locally gentle algebra of an admissible dissection}\label{sect::locallyGentleAlgebras}

In this section, we recall some results of \cite{OpperPlamondonSchroll} that are needed for our main results.

\begin{definition}\label{defi::locallyGentleOfADissection}
 Let~$\Delta$ be an admissible~$\gpoint$-dissection of a marked surface~$(S,M,P)$.
 The $k$-algebra~$A(\Delta)$ is the quotient of the path algebra of the quiver $Q(\Delta)$ by the ideal $I(\Delta)$ defined as follows:
 \begin{itemize}
  \item the vertices of~$Q(\Delta)$ are in bijection with the~$\gpoint$-arcs in~$\Delta$.
  \item there is an arrow~$i\to j$ in~$Q(\Delta)$ whenever the~$\gpoint$-arcs~$i$ and~$j$ meet at a marked point~$\gpoint$,
  with~$i$ preceding~$j$ in the counter-clockwise order around~$\gpoint$, and with no other arc coming to~$\gpoint$ between~$i$ and~$j$;
  \item the ideal~$I(\Delta)$ is generated by the following relations: 
  whenever~$i$ and $j$ meet at a marked point as above, and the other end of~$j$ meets~$k$ at a marked point as above,
  then the composition of the corresponding arrows~$i\to j$ and~$j\to k$ is a relation.
 \end{itemize}
\end{definition}

\begin{example}
Below is the quiver with relations of the~$\gpoint$-dissection of Example~\ref{exam::disection}.
\begin{center}
 \begin{tikzpicture}
  \draw[thick] (0,0) circle (2);
  
  \draw[dark-green] (0,0) -- (0,2) node[blue,midway] (1) {1};
  \draw[dark-green] (0,0) -- (150:2) node[blue,midway] (5) {5};;
  \draw[dark-green] (0,0) -- (-150:2) node[blue,midway] (6) {6};;
  \draw[dark-green] (0,0) -- (-30:2) node[blue,midway] (4) {4};;
  \draw[dark-green] (150:2) -- (-150:2) node[blue,midway] (7) {7};;
  \draw[dark-green] (30:2) -- (-30:2) node[blue,midway] (3) {3};;
  \draw[dark-green] (30:2) -- (90:2) node[blue,midway] (2) {2};;
  \draw[dark-green] (-90:2) -- (-150:2) node[blue,midway] (8) {8};;
  
  \draw[blue, ->] (1) -- (5) ;
  \draw[blue, ->] (5) -- (6) node[midway] (e) {};
  \draw[blue, ->] (6) -- (7) node[midway] (f) {};
  \draw[blue, ->] (8) -- (6) node[midway] (h) {};
  \draw[blue, ->] (6) -- (4) node[midway] (i) {};
  \draw[blue, ->] (1) -- (2) node[midway] (a) {} ;
  \draw[blue, ->] (2) -- (3) node[midway] (b) {} ;
  \draw[blue, ->] (3) -- (4) node[midway] (c) {} ;
  \draw[blue, ->] (4) -- (1) node[midway] (d) {} ;
  \draw[blue, ->] (7) -- (5) node[midway] (g) {} ;
  
  \draw[blue, densely dotted] (a) -- (b) ;
  \draw[blue, densely dotted] (a) -- (d) ;
  \draw[blue, densely dotted] (b) -- (c) ;
  \draw[blue, densely dotted] (c) -- (d) ;
  \draw[blue, densely dotted] (e) -- (f) ;
  \draw[blue, densely dotted] (f) -- (g) ;
  \draw[blue, densely dotted] (g) -- (e) ;
  \draw[blue, densely dotted] (h) -- (i) ;
  
  \foreach \i in {0,...,5}
  {
  \filldraw[white] ({60*\i+30}:2) circle (0.1);
  \draw[thick, dark-green] ({60*\i+30}:2) circle (0.1);
  }
  \filldraw[white] (0,0) circle (0.1);
  \draw[thick, dark-green] (0,0) circle (0.1);
  
  \foreach \i in {0,...,5}
  {
  \filldraw[red] ({60*\i}:2) circle (0.1);
  }
  \filldraw[red] (-1,0) circle (0.1);
  \filldraw[red] (1,0) circle (0.1);
 \end{tikzpicture}
\end{center}
The dotted lines in the figure represent relations.
\end{example}

\begin{theorem}[\cite{OpperPlamondonSchroll}\cite{PaluPilaudPlamondon2}]\label{theo::bijectionDissectionsGentleAlgebras}
 The assignment~$\big((S,M,P), \Delta)\big) \mapsto A(\Delta)$ defines a bijection from
 the set of homeomorphism classes of marked surfaces~$(S,M,P)$ with an admissible dissection to
 the set of isomorphism classes of locally gentle algebras.
 Under this bijection, gentle algebras (that is the finite dimensional locally gentle algebras) correspond to the case where $P_{\gpoint}=\emptyset$.
\end{theorem}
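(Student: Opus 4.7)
The plan is to verify the bijection in both directions and then identify the gentle subclass. \textbf{Well-definedness.} First I would check that $Q(\Delta)$ and $I(\Delta)$ satisfy the axioms of a locally gentle algebra. Since each $\gpoint$-arc has at most two endpoints and the arcs meeting at any $\gpoint$-marked point or $\gpoint$-puncture are cyclically ordered, each vertex of $Q(\Delta)$ has at most two arrows in and at most two out. The generating relations come from three consecutive arcs at a common $\gpoint$-point, so for each arrow $\alpha$ there is at most one arrow $\beta$ with $\alpha\beta\in I(\Delta)$ and at most one arrow $\beta'$ with $\alpha\beta'$ a non-relation path, and symmetrically on the other side: exactly the locally gentle conditions.

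\textbf{Inverse construction.} For an abstract locally gentle algebra $(Q,I)$, I would reconstruct a marked surface with a dissection from the combinatorics of \emph{permitted} and \emph{forbidden threads}, where a permitted (resp.\ forbidden) thread is a maximal sequence of arrows $\alpha_1\alpha_2\cdots$ with $\alpha_i\alpha_{i+1}\notin I$ (resp.\ $\in I$) for all $i$. Permitted threads correspond to $\gpoint$-marked points when finite and to $\gpoint$-punctures when they close into cycles; forbidden threads dually correspond to $\rpoint$-marked points and $\rpoint$-punctures, supplying the dual dissection $\Delta^*$ via Proposition~\ref{prop::dualDissection}. Thickening $Q$ into a ribbon graph using the cyclic gentle data and realizing it topologically produces a marked surface $(S,M,P)$ with an admissible $\gpoint$-dissection $\Delta$ satisfying $A(\Delta)\cong (Q,I)$. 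Proposition~\ref{prop::cellularDecomposition} guarantees that the complement of $\Delta$ in $S\setminus P$ decomposes into polygons and once-punctured polygons matching the thread classification, while Proposition~\ref{prop::numberOfArcs} fixes the genus via the counts $|M_{\gpoint}|$, $|P|$, $b$.

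\textbf{The gentle case and main obstacle.} Once the bijection is set up, $A(\Delta)$ fails to be finite-dimensional precisely when $(Q(\Delta),I(\Delta))$ admits an infinite (cyclic) permitted thread, which by the correspondence above occurs iff some $\gpoint$-point of the dissection is a puncture, that is, $P_{\gpoint}\neq\emptyset$. Hence $A(\Delta)$ is gentle if and only if $P_{\gpoint}=\emptyset$. The crux of the proof is the inverse construction, specifically uniqueness of the marked surface up to homeomorphism: one must show that the ribbon graph assembled from the gentle data embeds in a unique surface whose boundary components and punctures match those predicted by the thread combinatorics, that admissibility in the sense of Definition~\ref{defi::admissibleDissection} is automatic, and that the $\gpoint$/$\rpoint$ alternation along each boundary component is respected. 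Distinguishing boundary marked points from punctures via the finite-versus-infinite thread dichotomy, and verifying that isomorphic algebras arise exactly from homeomorphic dissected surfaces, is where the combinatorics is most delicate.
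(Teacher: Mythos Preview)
The paper does not prove this theorem; it is quoted from \cite{OpperPlamondonSchroll} and \cite{PaluPilaudPlamondon2} and used as background. So there is no in-paper proof to compare your attempt against.

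That said, your sketch is along the lines of the arguments in those references: the surface is recovered from a locally gentle pair $(Q,I)$ by endowing $Q$ with a ribbon-graph structure (the cyclic orders at each vertex coming from the gentle dichotomy ``$\alpha\beta\in I$ / $\alpha\beta\notin I$''), thickening to a surface, and reading off the $\gpoint$- and $\rpoint$-points from the maximal permitted and forbidden threads respectively. The identification of gentle algebras with $P_{\gpoint}=\emptyset$ via the absence of cyclic permitted threads is also the correct mechanism. One small correction: you invoke Proposition~\ref{prop::numberOfArcs} to ``fix the genus'', but that proposition counts arcs in a dissection of a \emph{given} surface; in the inverse direction the genus is determined directly by the ribbon-graph Euler characteristic, not by that proposition. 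Apart from this, your outline is accurate, and the point you flag as the crux---uniqueness of the surface up to homeomorphism and matching the $\gpoint$/$\rpoint$ pattern on the boundary---is indeed where the work lies in the cited papers.
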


\subsection{The surface as a model for the derived category}\label{sect::geometricModel}

We recall some of the results of~\cite{OpperPlamondonSchroll} on the correspondence of certain curves on $(S, M, P)$ and objects in the triangulated category $K^{-,b}(\proj A(\Delta))$ of complexes of finitely-generated projective~$A(\Delta)$-modules which are bounded on the right and whose total homology is bounded.
We will only recall those definitions which are needed in what follows and we refer to \cite{OpperPlamondonSchroll} for a complete description of $K^{-,b}(\proj A(\Delta))$ in terms of curves and intersections of curves in the associated surface.

\begin{definition} Let~$(S,M,P)$ be a marked surface with admissible~$\gpoint$-dissection~$\Delta$.
 Recall from Definition~\ref{defi::arcs} that a~$\gpoint$-arc is in particular a smooth map from~$(0,1)$ to~$S\setminus P$. In this definition, we assume that all arcs intersect the arcs of~$\Delta^*$ minimally and transversally.
A \emph{graded~$\gpoint$-arc}~$(\gamma,f)$ is a~$\gpoint$-arc~$\gamma$, together with a function
   \[
    f: \gamma \cap \Delta^* \longrightarrow \bZ,
   \]
  where~$\gamma \cap \Delta^*$ is the totally ordered set of intersection points of~$\gamma$ with~$\Delta^*$.
  The function~$f$ is required to satisfy the following:
  if~$p$ and~$q$ are in~$\gamma \cap \Delta^*$ and~$q$ is the successor of~$p$,
  then~$\gamma$ enters a polygon enclosed by~$\rpoint$-arcs of~$\Delta^*$ via~$p$ and leaves it via~$q$.
  If the~$\gpoint$ in this polygon is to the left of~$\gamma$, then~$f(q) = f(p)+1$; otherwise,~$f(q) = f(p)-1$.
\end{definition}

\begin{remark} 
 If~$(\gamma, f)$ is a finite~graded $\gpoint$-arc whose endpoints are not punctures, 
 and if~$p$ and $q$ are the first and last intersection points of~$\gamma$ with arcs of~$\Delta^*$, respectively,
 then by Lemma~\ref{lemm::computingWindingNumber}~$w^{\Delta^*}(\gamma) = f(q) - f(p)$, where~$w^{\Delta^*}$ is as in Definition~\ref{defwinding}.
\end{remark}

 In~\cite[Theorem 3.3]{OpperPlamondonSchroll}, a complete description of the indecomposable objects~in $K^{-,b}(\proj A(\Delta))$ was given in terms of graded curves; in particular, homotopy classes of graded~$\gpoint$-arcs are in bijection with certain indecomposable objects called \emph{(finite) string objects}.  We denote by~$\P_{(\gamma, f)}$ the object associated to the graded curve~$(\gamma,f)$.
 
 Furthermore, the morphisms between $\P_{(\gamma_1, f_1)}$ and $\P_{(\gamma_2, f_2)}$  correspond to the intersections of $\gamma_1$ and $\gamma_2$ such that the gradings $f_1$ and $f_2$ agree locally. More precisely, suppose that we have the following local configuration in $S$ depicted in the figure below,
 \begin{figure}[h!]
		\captionsetup{labelformat=empty}
		\captionsetup{justification=centering,singlelinecheck=false, format=hang}
		\centering
			{\begin{tikzpicture} 
				
				
				\foreach \u in {1,2} 
				\draw ({1.3*cos(360/8+360/4*\u)}, {1.3*sin(360/8+360/4*\u)})--({1.3*cos(180+360/8+360/4*\u)}, {1.3*sin(180+360/8+360/4*\u)}); 
				
				\foreach \u in {1,...,4} 
				\draw [line width=0.5, color=red] plot  [smooth, tension=1] coordinates {   ({2*cos(360/8+360/4*\u+360/16)}, {2*sin(360/8+360/4*\u+360/16)}) ({1.3*cos(360/8+360/4*\u)}, {1.3*sin(360/8+360/4*\u)}) ({2*cos(360/8+360/4*\u-360/16)}, {2*sin(360/8+360/4*\u-360/16)})};

				\draw ({1.75*cos(360/8)}, {1.75*sin(360/8)}) node {$\gamma_{2}$};
				\draw ({1.75*cos(-360/8)}, {1.75*sin(-360/8)}) node {$\gamma_{1}$};
				

				\draw ({1.3*cos(45)},{-1.3*sin(45)}) circle (2pt);
				\draw ({1.3*cos(45)},{1.3*sin(45)}) circle (2pt);
				\end{tikzpicture}
			} 
        \end{figure}        
where the four rounded curves are arcs of the dual dissection $\Delta^*$ which are not necessarily pairwise distinct, and where the intersection point of~$\gamma_1$ and~$\gamma_2$ may be on the boundary. The two empty circles designate points $v \in \gamma_1 \cap \Delta^*$ and $w \in \gamma_1 \cap \Delta^*$ such that  $f_1(v) = f_2(w)$.
Then there is a morphism from $\P_{(\gamma_1,f_1)}$ to $\P_{(\gamma_2,f_2)}$.

The~\emph{shift} of a graded curve~$(\gamma, f)$ is the graded curve~$(\gamma, f[1])$, where~$f[1] = f-1$.  It follows directly that if the intersection of $\gamma_1$ and $\gamma_2$ lies in the interior of $S$ then for a shift of the grading there is a morphism from $(\gamma_2,f_2)$ to $(\gamma_1,f_1[1]) $. 

\section{Silting objects}\label{sect::siltingObjects}
Let~$(S,M,P)$ be a marked surface with $P_{\gpoint}=\emptyset$, and let~$\Delta$ be an admissible~$\gpoint$-dissection.
Our aim in this section is to classify the silting objects in the bounded derived category of~$D^b(\MOD A(\Delta))$
in terms of graded curves on~$(S,M,P)$.

First, let us recall the definition of a silting object.

\begin{definition}
 Let~$A$ be a finite-dimensional~$k$-algebra.  An object~$X$ of~$D^b(\MOD A)$ is \emph{presilting} if
 \begin{itemize}
  \item $X$ is isomorphic to a bounded complex of finitely generated projective~$A$-modules;
  \item for any integer~$i>0$, $\Hom{D^b}(X, X[i]) =0$;
 \end{itemize}
 The object~$X$ is \emph{silting} if, moreover, $X$ generates the perfect derived category~$\perf A \cong K^b(\proj A)$.
 
 The object~$X$ is \emph{tilting} if it is silting and for any integer~$i<0$, $\Hom{D^b}(X, X[i]) =0$.
\end{definition}

Our main result in this section is the following.

\begin{theorem}\label{theo::siltingObjects}
 Let~$(S,M,P)$ be a marked surface, and let~$\Delta$ be an admissible~$\gpoint$-dissection.
 Let~$X$ be a basic silting object in~$D^b(\MOD A(\Delta))$.
 Then~$X$ is isomorphic to a direct sum~$\bigoplus_{i=1}^n \P_{(\gamma_i, f_i)}$,
 where $\{\gamma_1, \ldots, \gamma_n\}$ is an admissible~$\gpoint$-dissection of~$(S,M,P)$.
\end{theorem}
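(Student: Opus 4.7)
The plan is to reduce $X$ to its indecomposable summands, identify each with a graded $\gpoint$-arc via the geometric model of~\cite{OpperPlamondonSchroll} recalled in Section~\ref{sect::geometricModel}, and then translate the presilting and silting conditions into geometric constraints on these arcs.

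First, I would show that every indecomposable summand of $X$ is a finite string object $\P_{(\gamma, f)}$. Since $X$ lies in $\perf A(\Delta)=K^b(\proj A(\Delta))$, its indecomposable summands are either finite string objects or perfect band objects, by the classification in~\cite{OpperPlamondonSchroll}. However, the closed curve underlying any band object $B$ produces a non-zero morphism $B\to B[n]$ for some $n>0$ (arising from the self-looping structure of the band together with its monodromy parameter), which violates the presilting condition. Hence $X\cong \bigoplus_{i=1}^n \P_{(\gamma_i,f_i)}$ with each summand a finite string object. Next, the underlying $\gpoint$-arcs $\gamma_1,\ldots,\gamma_n$ must be pairwise non-intersecting in the interior of $S$: by the intersection-morphism dictionary, any transversal interior intersection between $\gamma_i$ and $\gamma_j$ contributes both a morphism $\P_{(\gamma_i,f_i)}\to \P_{(\gamma_j,f_j)}[d]$ and a morphism $\P_{(\gamma_j,f_j)}\to \P_{(\gamma_i,f_i)}[1-d]$ for a suitable integer $d$ depending on the gradings, and at least one of $d$ and $1-d$ is strictly positive, contradicting presilting. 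Consequently the $\gamma_i$'s can only meet at points of $M_\gpoint$.

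Finally, I would verify that $\{\gamma_1,\ldots,\gamma_n\}$ is an admissible $\gpoint$-dissection. The cardinality is forced: $n$ equals the rank of $K_0(\perf A(\Delta))$, hence the number of simple modules of $A(\Delta)$, hence the number of arcs in $\Delta$, which by Proposition~\ref{prop::numberOfArcs} equals $|M_\gpoint|+|P|+b+2g-2$. An Euler characteristic computation then shows that the complement of $\gamma_1\cup\cdots\cup\gamma_n$ in the compactification of $S$ has exactly $|M_\gpoint|+|P|$ two-dimensional regions, which coincides with the total number $|M_\rpoint|+|P_\rpoint|$ of $\rpoint$-marked points and $\rpoint$-punctures. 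Since every $\rpoint$-point belongs to a unique region, a pigeonhole argument reduces admissibility (every region contains exactly one $\rpoint$-point) to the absence of \emph{empty} regions. If an empty region $R$ existed, it would be bounded solely by arcs of our collection and contain no $\rpoint$-structure; here one invokes the \emph{generation} condition of silting (as opposed to mere presilting) to derive a contradiction, for instance by exhibiting a perfect complex supported in $R$ that does not lie in the thick subcategory generated by $X$.

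The main obstacle will be this last step. Cardinality and non-crossing alone do not imply admissibility, so excluding empty forbidden regions is precisely where the generation property of silting (rather than just presilting) plays its essential role, and translating this algebraic condition into a geometric contradiction is the most delicate part of the argument.
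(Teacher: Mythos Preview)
Your first steps agree with the paper (Lemmas~\ref{lemm::onlyFiniteArcsInSilting}, \ref{lemm::noInteriorIntersection}, \ref{lemm::numberOfSummands}). The genuine gap is in the admissibility argument. Your Euler-characteristic count of complementary regions tacitly assumes each region is a disc, but if admissibility fails a region may have positive genus or several boundary circles, so the count need not equal $|M_{\rpoint}|+|P_{\rpoint}|$ and the pigeonhole reduction breaks down. Even granting the reduction, your use of generation to exclude an empty region $R$ is only sketched, and it is unclear how to manufacture a perfect object ``supported in $R$'': arcs confined to $R$ cannot reach any $\gpoint$-marked point, so they do not obviously correspond to objects of $K^b(\proj A(\Delta))$.

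The paper bypasses generation entirely and deduces admissibility from the \emph{presilting} condition alone (Lemma~\ref{lemm::noInternalPolygons}), via winding numbers. If some of the $\gamma_i$ bound an unpunctured subsurface $S'$ of genus $g'$ with $b$ boundary cycles $\delta_1,\ldots,\delta_b$, then Proposition~\ref{prop::degreesOfArcs}(5) gives $\sum_j w^{\Delta^*}(\delta_j)=4-2b-4g'$. On the other hand, expanding each $w^{\Delta^*}(\delta_j)$ via Proposition~\ref{prop::degreesOfArcs}(3),(4) as a sum of the $w^{\Delta^*}(\gamma_{j,i})=f_{j,i}(q_{j,i})-f_{j,i}(p_{j,i})$ plus corner terms, and using the grading inequalities $f_{j,i+1}(p_{j,i+1})\leq f_{j,i}(q_{j,i})$ forced by presilting at the shared $\gpoint$-endpoints, one obtains $\sum_j w^{\Delta^*}(\delta_j)\geq \sum_j s_j\geq b$. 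Hence $4-2b-4g'\geq b$, which forces $g'=0$, $b=1$, and at most two boundary arcs; the remaining digon case contradicts basicness. This winding-number inequality is the key idea you are missing.
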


\begin{lemma}\label{lemm::numberOfSummands}
 The number of indecomposable direct summands of any silting object in~$D^b(\MOD A(\Delta))$ is~$|M_{\gpoint}|+|P|+b+2g-2$.
\end{lemma}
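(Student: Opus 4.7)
The plan is to reduce the claim to a standard fact from silting theory together with the combinatorial count of arcs in Proposition~\ref{prop::numberOfArcs}. Concretely, for any finite-dimensional algebra $A$, it is well known (see e.g.\ Aihara--Iyama) that every basic silting object in $K^b(\proj A)$ has exactly $\rank_{\bZ} K_0(\perf A)$ indecomposable summands, and $\rank_{\bZ} K_0(\perf A)$ equals the number of isomorphism classes of simple $A$-modules. I will use this as a black box.

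First I would note that since $P_{\gpoint}=\emptyset$ (this is the standing hypothesis at the start of the section), the algebra $A(\Delta)$ is finite-dimensional by Theorem~\ref{theo::bijectionDissectionsGentleAlgebras}, so the silting theorem above applies. Then the number of simple $A(\Delta)$-modules equals the number of vertices of $Q(\Delta)$, which by Definition~\ref{defi::locallyGentleOfADissection} is exactly the cardinality of $\Delta$, i.e.\ the number of $\gpoint$-arcs in the admissible dissection. Finally, Proposition~\ref{prop::numberOfArcs} identifies this number with $|M_{\gpoint}|+|P|+b+2g-2$, yielding the lemma.

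There is essentially no obstacle here: the only thing one might want to double-check is that the indecomposable summands of a silting object are linearly independent in $K_0(\perf A)$ (which gives the upper bound $n \leq \rank K_0$) and that they generate it as a triangulated category (which, combined with being a silting object, forces equality). Both are standard. The main thing to be careful about is to invoke the silting count for $K^b(\proj A(\Delta))$ rather than for the full bounded derived category; since $A(\Delta)$ is finite-dimensional, $K_0(\perf A(\Delta)) \cong K_0(A(\Delta))$, so the count agrees with the number of simples.
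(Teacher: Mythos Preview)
Your proof is correct and essentially follows the paper's approach: both reduce to the Aihara--Iyama result on the number of summands of silting objects together with Proposition~\ref{prop::numberOfArcs}. The paper phrases it slightly more directly by observing that $A(\Delta)$ itself is a basic silting object whose indecomposable summands are indexed by the arcs of~$\Delta$, whereas you pass through $\rank K_0(\perf A(\Delta))$ and the number of simples; these are equivalent formulations of the same computation.
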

\begin{proof}
 All silting objects have the same number of indecomposable summands \cite[Corollary 2.28]{AiharaIyama}.
 The algebra~$A(\Delta)$ viewed as an object of~$D^b(\MOD A(\Delta))$ is a basic silting object,
 and its number of indecomposable direct summands is the number of~$\gpoint$-arcs in~$\Delta$.
 By Proposition~\ref{prop::numberOfArcs}, this number is~$|M_{\gpoint}|+|P|+b+2g-2$.
\end{proof}

\begin{lemma}\label{lemm::onlyFiniteArcsInSilting}
 Any indecomposable summand of a presilting object in~$D^b(\MOD A(\Delta))$ is isomorphic to an object of the form~$\P_{(\gamma,f)}$,
 where~$(\gamma, f)$ is a graded~$\gpoint$-arc.
\end{lemma}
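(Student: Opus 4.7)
The plan is to combine the definition of a presilting object with the classification of indecomposable objects in $K^{-,b}(\proj A(\Delta))$ from \cite{OpperPlamondonSchroll}. Recall that, by \cite[Theorem 3.3]{OpperPlamondonSchroll}, every indecomposable object of $K^{-,b}(\proj A(\Delta))$ is isomorphic to exactly one of the following: (i) a finite string complex $\P_{(\gamma,f)}$ associated to a graded $\gpoint$-arc $(\gamma,f)$; (ii) an infinite string complex associated to a one- or two-sided infinite graded arc; or (iii) a band complex associated to a closed graded curve equipped with a scalar $\lambda \in k^{\times}$ and a positive integer $n$. The goal reduces to showing that only the first family produces perfect complexes.

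Since $X$ is presilting, by definition $X$ is isomorphic to a bounded complex of finitely generated projectives, so $X \in K^b(\proj A(\Delta))$. The subcategory $K^b(\proj A(\Delta))$ is thick inside $K^{-,b}(\proj A(\Delta))$ and, because $A(\Delta)$ is a finite-dimensional algebra, both categories are Krull--Schmidt; hence any indecomposable direct summand of $X$ is again perfect. Consequently, it suffices to show that an indecomposable of type (ii) or (iii) cannot be perfect.

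For type (ii), the infinite string complex has, by construction, a nonzero indecomposable projective summand in each of infinitely many degrees, namely one per intersection point of the underlying infinite graded arc with the arcs of $\Delta^*$. Since such an object admits, up to isomorphism, a unique minimal projective presentation in $K^{-,b}(\proj A(\Delta))$, it cannot be isomorphic to any bounded complex of finitely generated projectives. For type (iii), the band complex is periodic of infinite total length by construction (since band modules over $A(\Delta)$ are $\Omega$-periodic and have infinite projective dimension), and the same uniqueness-of-minimal-presentation argument rules it out. Thus only finite string complexes $\P_{(\gamma, f)}$ can appear as indecomposable summands of $X$.

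The only nontrivial step is the invocation of the full classification in \cite[Theorem 3.3]{OpperPlamondonSchroll}, in particular the fact that the three families listed above exhaust the indecomposables of $K^{-,b}(\proj A(\Delta))$ together with the explicit description of their minimal projective presentations. Once this is taken as input, the lemma follows with no further surface-topological argument; in particular, no use is made of the presilting hypothesis beyond the perfectness of~$X$.
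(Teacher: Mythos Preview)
Your treatment of type~(ii) is fine and matches the paper. The problem is with type~(iii): band complexes \emph{are} perfect. A band object in $K^{-,b}(\proj A(\Delta))$ is, by construction in \cite{OpperPlamondonSchroll}, a bounded complex of finitely generated projectives, with one indecomposable projective summand for each intersection of the closed curve with~$\Delta^*$. You appear to be conflating band \emph{modules} in $\MOD A(\Delta)$ (which can be $\Omega$-periodic and of infinite projective dimension) with band \emph{complexes} in the derived category; the latter are not projective resolutions of the former, and the ``periodic of infinite total length'' description does not apply to them. So perfectness alone cannot exclude band objects.

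This is exactly why the paper does use more of the presilting hypothesis than mere perfectness: the argument there is that any band object $B$ satisfies $\Hom{D^b}(B,B[i])\neq 0$ for some $i>0$ (band objects always have self-extensions), which contradicts the condition $\Hom{D^b}(X,X[i])=0$ for all $i>0$. Your final remark that ``no use is made of the presilting hypothesis beyond the perfectness of~$X$'' is therefore precisely where the gap lies. To repair the proof, replace your type~(iii) paragraph by the self-extension argument.
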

\begin{proof}
 This is a consequence of~\cite[Theorem 2.12]{OpperPlamondonSchroll}.  More precisely, if~$\gamma$ is an infinite arc, then~$\P_{(\gamma,f)}$ is not in~$K^b(\proj A(\Delta))$.
 If~$\gamma$ is a closed curve, then any band object associated to it has self-extensions, 
 and thus cannot be a direct summand of a presilting object. 
\end{proof}

\begin{lemma}\label{lemm::noInteriorIntersection}
 Let~$(\gamma,f)$ and~$(\delta, g)$ be two graded~$\gpoint$-arcs, 
 and let~$\P_{(\gamma,f)}$ and~$\P_{(\delta, g)}$ be the corresponding objects.
 If~$\P_{(\gamma,f)}\oplus \P_{(\delta, g)}$ is presilting, then~$\gamma$ and~$\delta$ may only intersect at their endpoints.
\end{lemma}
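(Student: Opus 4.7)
The plan is to argue by contradiction using the description of morphisms between string objects recalled in Section~\ref{sect::geometricModel}. Suppose that $\gamma$ and $\delta$ admit an essential intersection $p$ in the interior of $S\setminus(M\cup P)$, and put the two arcs in minimal position so that they cross transversally and meet the arcs of $\Delta^*$ minimally.

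From this single interior intersection I would extract two explicit nonzero morphisms. Let $v\in \gamma\cap \Delta^*$ and $w\in \delta\cap \Delta^*$ be the crossings with the dual dissection adjacent to $p$ in the local configuration drawn in Section~\ref{sect::geometricModel}, and set $k:=g(w)-f(v)\in\bZ$. Then $f$ and the shifted grading $g-k$ match at $p$, so the morphism recipe produces a nonzero map
\[
\varphi:\P_{(\gamma,f)}\longrightarrow \P_{(\delta,g)}[k].
\]
Since $p$ lies in the interior of $S$, the ``interior intersection'' rule recalled in Section~\ref{sect::geometricModel} (namely, for a shift of the grading one obtains a morphism in the opposite direction with the degree shifted by one) applied to the pair $(\gamma,f)$, $(\delta,g-k)$ yields a nonzero morphism $\P_{(\delta,g-k)}\to \P_{(\gamma,f[1])}=\P_{(\gamma,f)}[1]$, which after shifting both sides by $-k$ becomes
\[
\psi:\P_{(\delta,g)}\longrightarrow \P_{(\gamma,f)}[1-k].
\]

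Because $k+(1-k)=1$, at least one of the integers $k$ and $1-k$ is strictly positive. In either case, one of $\varphi$ and $\psi$ is a nonzero element of $\Hom{D^b}(X,X[i])$ for some $i>0$, where $X:=\P_{(\gamma,f)}\oplus\P_{(\delta,g)}$, contradicting the presilting hypothesis. Hence $\gamma$ and $\delta$ can share no point in the interior of $S\setminus P$, so they meet only at their endpoints.

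The main subtlety I expect is to verify that $\varphi$ and $\psi$ are genuinely nonzero, i.e., that every interior intersection contributes linearly independent basis elements in both directions under the OPS correspondence recalled in Section~\ref{sect::geometricModel}. The degenerate case where $(\gamma,f)$ and $(\delta,g)$ represent isomorphic summands (in particular when $\gamma$ and $\delta$ are homotopic) is innocuous: in that case the two curves can be isotoped to avoid any interior crossing to begin with.
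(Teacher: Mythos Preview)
Your argument is correct and is essentially the same as the paper's. Both proofs exploit that an interior intersection of~$\gamma$ and~$\delta$ produces nonzero morphisms in both directions whose degree shifts add up to~$1$, forcing a positive self-extension of~$\P_{(\gamma,f)}\oplus\P_{(\delta,g)}$; the paper phrases this by following the four adjacent~$\Delta^*$-crossings $p,q,p',q'$ around the intersection and deriving the cyclic chain $f(p)\geq g(q)\geq f(p')\geq g(q')\geq f(p)$ (hence equality, contradicting $f(p')=f(p)\pm1$), whereas you invoke directly the ``reverse morphism with shift by~$1$'' fact recalled in Section~\ref{sect::geometricModel}.
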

\begin{proof}
 Assume that~$\gamma$ and~$\delta$ intersect in the interior of the surface.  Consider the local picture around such an intersection point.
  \begin{figure}[h!]
		\captionsetup{labelformat=empty}
		\captionsetup{justification=centering,singlelinecheck=false, format=hang}
		\centering
			{\begin{tikzpicture} 
				
				
				\foreach \u in {1,2} 
				\draw ({1.3*cos(360/8+360/4*\u)}, {1.3*sin(360/8+360/4*\u)})--({1.3*cos(180+360/8+360/4*\u)}, {1.3*sin(180+360/8+360/4*\u)}); 
				
				\foreach \u in {1,...,4} 
				\draw [line width=0.5, color=red] plot  [smooth, tension=1] coordinates {   ({2*cos(360/8+360/4*\u+360/16)}, {2*sin(360/8+360/4*\u+360/16)}) ({1.3*cos(360/8+360/4*\u)}, {1.3*sin(360/8+360/4*\u)}) ({2*cos(360/8+360/4*\u-360/16)}, {2*sin(360/8+360/4*\u-360/16)})};

				\draw ({1.75*cos(360/8)}, {1.75*sin(360/8)}) node {$\delta$};
				\draw ({1.75*cos(-360/8)}, {1.75*sin(-360/8)}) node {$\gamma$};
				

				\draw ({1.3*cos(45)},{-1.3*sin(45)}) circle (2pt);
				\draw ({1.3*cos(45)},{1.3*sin(45)}) circle (2pt);
				\draw ({-1.3*cos(45)},{1.3*sin(45)}) circle (2pt);
				\draw ({-1.3*cos(45)},{-1.3*sin(45)}) circle (2pt);
				
				\draw ({1.3*cos(45+15)},{-1.3*sin(45+15)}) node {$p$};
				\draw ({1.3*cos(45+18)},{1.3*sin(45+18)}) node {$q$};
				\draw ({-1.3*cos(45+18)},{1.3*sin(45+18)}) node {$p'$};
				\draw ({-1.3*cos(45+20)},{-1.3*sin(45+20)}) node {$q'$};
				\end{tikzpicture}
			} 
        \end{figure}
 
 If~$f(p)<g(q)$, then~$\Hom{D^b(\MOD A(\Delta))}(\P_{(\gamma, f)}, \P_{(\delta, g)}[g(q) - f(p)]) \neq 0$, contradicting the assumption that~$\P_{(\gamma,f)}\oplus \P_{(\delta, g)}$ is presilting.  Thus~$f(p) \geq g(q)$.  In a similar way, we prove that~$g(q)\geq f(p') \geq g(q') \geq f(p)$.  Therefore~$f(p) = f(p') = g(q) = g(q')$. But this contradicts the fact that~$f(p') = f(p) \pm 1 $ and~$g(q) = g(q') \pm 1$.

\end{proof}

\begin{lemma}\label{lemm::noInternalPolygons}
 Let~$(\gamma_1, f_1), \ldots, (\gamma_r, f_r)$ be pairwise distinct graded~$\gpoint$-arcs such that the object
 ~$\P_{(\gamma_1,f_1)}\oplus \ldots \oplus \P_{(\gamma_r, f_r)}$ is basic presilting.
 Then~$\{\gamma_1, \ldots, \gamma_r\}$ is an admissible collection of~$\gpoint$-arcs.
\end{lemma}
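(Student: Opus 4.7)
The plan is to argue by contradiction. Suppose $\{\gamma_1, \ldots, \gamma_r\}$ is not an admissible collection. By Lemma~\ref{lemm::noInteriorIntersection} the arcs pairwise intersect only at their endpoints, and a quick argument with shifts of graded arcs shows the $\gamma_i$ are pairwise non-homotopic as unoriented arcs (otherwise the identity between the corresponding objects would produce a non-trivial positive-degree self-extension). Hence the failure of admissibility must come from some of the arcs enclosing a subsurface $S' \subseteq S$ of genus $g'$, whose boundary contains no segment of $\partial S$ and whose interior contains no puncture of $P_\rpoint$; since the standing hypothesis of Section~\ref{sect::siltingObjects} is $P_\gpoint = \emptyset$, the interior of $S'$ in fact contains no puncture at all. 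My goal is to extract from the geometry of $\partial S'$ a nonzero morphism $\P_{(\gamma_i, f_i)} \to \P_{(\gamma_j, f_j)}[n]$ with $n > 0$, contradicting the presilting hypothesis.

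First I would decompose $\partial S' = c_1 \sqcup \cdots \sqcup c_k$, orient each $c_t$ so that $S'$ lies to its right, and note that every $c_t$ is a cyclic concatenation of arcs from the collection whose vertices all lie in $M_\gpoint$. At each such vertex $x$, two arcs $\gamma$ and $\delta$ of the collection meet consecutively (counterclockwise inside $S'$); the model for morphisms recalled in Section~\ref{sect::geometricModel} associates to this endpoint intersection a morphism in $D^b(\MOD A(\Delta))$ of some degree $n_x$, which can be read off from $f(p) - g(q)$, where $p \in \gamma \cap \Delta^*$ and $q \in \delta \cap \Delta^*$ are the intersections with $\Delta^*$ closest to $x$, together with the relative orientations of $\gamma$ and $\delta$ at $x$. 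The presilting condition forces $n_x \leq 0$ for every vertex $x$ of $\partial S'$.

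The central ingredient is the topological identity of Proposition~\ref{prop::degreesOfArcs}(5),
\[
 \sum_{t=1}^{k} w^{\Delta^*}(c_t) \;=\; 4 - 2k - 4g',
\]
combined with the local expansion of each $w^{\Delta^*}(c_t)$ provided by Proposition~\ref{prop::degreesOfArcs}(3)--(4). Expanding, each $w^{\Delta^*}(c_t)$ becomes a sum of $w^{\Delta^*}$ of the constituent arcs plus correction terms $\varepsilon_x = \pm 1$ at each vertex $x$, recording on which side of the concatenation $x$ lies. A careful matching of these correction signs $\varepsilon_x$ with the local degrees $n_x$ from the previous step shows that the cumulative quantity $\sum_{x}n_x$ along $\partial S'$ is bounded below in terms of $4 - 2k - 4g'$ and the parities of the vertices, and this bound forces $n_x > 0$ at some vertex, producing the desired contradiction.

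The main obstacle will be the bookkeeping in the last step: precisely reconciling the sign conventions of the winding-number corrections $\varepsilon_x$ with those of the morphism degrees $n_x$ at each of the several local configurations (in/out orientations of $\gamma$ and $\delta$ at $x$, and position of the $\gpoint$ inside the polygons of $\Delta^*$ adjacent to $x$), and then verifying that the aggregated inequality is incompatible with the topological identity for every admissible pair $(k, g')$. In particular, the degenerate case $(k, g') = (1, 0)$ — where the right-hand side $4-2k-4g' = 2$ is positive — must be handled by exploiting the fact that any bounding arc is, by definition, non-contractible, so the boundary of a disc-like $S'$ must already use at least two distinct arcs of the collection meeting at two distinct vertices.
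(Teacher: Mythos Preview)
Your approach is essentially the one the paper takes: contradiction via Proposition~\ref{prop::degreesOfArcs}(5) combined with the expansion in Proposition~\ref{prop::degreesOfArcs}(3)--(4), using the presilting inequalities at the vertices, and finishing with the degenerate disc case. One simplification you should note: the bookkeeping you flag as the main obstacle is in fact quite clean, because once you orient each boundary component so that $S'$ lies to its right, \emph{every} vertex lies to the left of the local concatenation, so all corrections are $\varepsilon_x = +1$; this gives directly $\sum_t w^{\Delta^*}(c_t) = \sum_t s_t + \sum_{j,i}\bigl(f_{j,i}(q_{j,i}) - f_{j,i}(p_{j,i})\bigr)$, and the second sum telescopes cyclically into a sum of the nonnegative quantities $f_{j,i}(q_{j,i}) - f_{j,i+1}(p_{j,i+1})$, yielding $4 - 2k - 4g' \geq \sum_t s_t \geq k$ without any case analysis on local configurations.
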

\begin{proof}
 By Lemma~\ref{lemm::noInteriorIntersection}, the arcs~$\gamma_1, \ldots, \gamma_r$ are pairwise non-intersecting, except possibly at their endpoints.
 It remains to be shown that they do not enclose any unpunctured surface.
 Assume that arcs~$\gamma_1, \ldots, \gamma_s$ do enclose an unpunctured surface~$S'$.  
 We can orient the arcs so that the surface~$S'$ lies to the right of each~$\gamma_i$.
 Say that~$S'$ has~$b$ boundary components.  Let us re-index the~$\gamma_i$ in such a way that the~$j$-th boundary component consists of the arcs~$\gamma_{j,1}, \ldots, \gamma_{j,s_j}$, and let~$\delta_j$ be the concatenation of the~$\gamma_{j,i}$.
 
 Then by Proposition~\ref{prop::degreesOfArcs}(5), we have that~$$\sum_{j=1}^b w^{\Delta^*}(\delta_j) = 4-2b-4g'.$$
 
 Now, let~$p_{j,i}$ and~$q_{j,i}$ be the first and last intersection points of~$\gamma_{j,i}$ with arcs of~$\Delta^*$, for all~$j\in\{1, \ldots, b\}$ and all~$i\in \{1, \ldots, s_j\}$.
 Using Proposition~\ref{prop::degreesOfArcs}(3) and (4), we have that
 \begin{eqnarray*}
  \sum_{j=1}^b w^{\Delta^*}(\delta_j) & = & \sum_{j=1}^b \big( s_j + \sum_{i=1}^{s_j} w^{\Delta^*}(\gamma_{j,i}) \big) \\
                         & = & \sum_{j=1}^b \Big( s_j + \sum_{i=1}^{s_j} \big( f_{j,i}(q_{j,i}) - f_{j,i}(p_{j,i}) \big)\Big).
 \end{eqnarray*}
 
  Finally, by the definition of a silting object and the description of morphisms between the objects~$\P_{(\gamma_i, f_i)}$ (see \cite[Theorem 3.3]{OpperPlamondonSchroll}),
 we have that for all~$j\in\{1, \ldots, b\}$ and all~$i\in \{1, \ldots, s_j\}$ (taken modulo~$s_j$), $f_{j,i+1}(p_{j,i+1}) \leq f_{j,i}(q_{j,i})$, otherwise there would be a non-zero morphism in
 $\Hom{D^b}(\P_{(\gamma_{j,i}, f_{j,i})}, \P_{(\gamma_{j,i+1}, f_{j,i+1})}[\ell])$ for some~$\ell>0$.
  Hence
 \[
  4-2b-4g' = \sum_{j=1}^b w^{\Delta^*}(\delta_j) = \sum_{j=1}^b \Big( s_j + \sum_{i=1}^{s_j} \big( f_{j,i}(q_{j,i}) - f_{j,i}(p_{j,i}) \big)\Big) \geq \sum_{j=1}^b s_j \geq b.
 \]
 Since the right-hand side is positive, we must have that~$g'=0$ and~$b=1$, so~$S'$ is a disc.  Moreover, we deduce that~$s\leq 2$.  But~$s=1$ is impossible, otherwise~$S'$ would be a monogon, so the only curve on its boundary would be contractible in~$S$, a contradiction.  Thus~$s=2$.  But then~$\gamma_1$ and~$\gamma_2$ enclose an unpunctured digon, so~$\gamma_2 = \gamma_1^{-1}$.
 Thus~$\P_{(\gamma_1,f_1)}$ and~$\P_{(\gamma_2,f_2)}$ are isomorphic up to shift, 
 and the only way for~$\P_{(\gamma_1,f_1)}\oplus \P_{(\gamma_2,f_2)}$ to be presilting is for the two objects to be isomorphic.
 This contradicts the fact that the graded~$\gpoint$-arcs are pairwise distinct.
  
 Thus the~$\gpoint$-arcs~$\gamma_1, \ldots, \gamma_r$ do not enclose any unpunctured surface, and they form an admissible~$\gpoint$-dissection.
 
\end{proof}

\begin{proof}[Proof of Theorem~\ref{theo::siltingObjects}]
 Let~$X$ be a basic silting object.
 By Lemma~\ref{lemm::numberOfSummands}, it has~$n=|M_{\gpoint}|+|P|+b+2g-2$ indecomposable direct summands.
 By Lemma~\ref{lemm::onlyFiniteArcsInSilting}, there are graded~$\gpoint$-arcs~$(\gamma_1, f_1), \ldots, (\gamma_n, f_n)$
 such that~$X$ is the direct sum of all the~$\P_{(\gamma_i, f_i)}$.
 By Lemma~\ref{lemm::noInteriorIntersection},~$\{\gamma_1, \ldots, \gamma_n\}$ is an admissible collection of~$\gpoint$-arcs.
 By Proposition~\ref{prop::numberOfArcs}, this collection is an admissible~$\gpoint$-dissection.
\end{proof}

A converse of Theorem~\ref{theo::siltingObjects} can be stated by using the following result.

\begin{proposition}\label{prop::maximal-presilting-are-silting}
 Let~$A$ be a gentle algebra, and let~$X$ be a presilting object in~$D^b(\MOD A)$.  Then~$X$ is silting if and only if it is maximal presilting (in the sense that if~$X\oplus X'$ is presilting, then~$X'\in \add(X)$).
\end{proposition}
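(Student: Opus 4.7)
The plan is to prove the two implications separately. For the forward direction, I would argue by standard silting theory: if $X$ is silting and $X\oplus Y$ is presilting, then $X$ generates $\perf A\cong K^b(\proj A)$ as a thick subcategory, so $Y$ admits a finite resolution by objects in $\add(X)$. Using the hypotheses $\Hom{D^b}(X\oplus Y,(X\oplus Y)[i])=0$ for $i>0$ and a standard induction on the length of such a resolution, each step splits and one concludes $Y\in\add(X)$. This is independent of the geometric model and works for any algebra admitting a silting object.

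For the reverse direction, the key is a Bongartz-type completion: every basic presilting object $X$ in $D^b(\MOD A)$ can be enlarged to a silting object $X\oplus Y$. Assuming this, maximality of $X$ forces $Y\in\add(X)$, so $X$ itself is silting. To prove the completion using the geometric model, I would first apply Lemmas~\ref{lemm::onlyFiniteArcsInSilting},~\ref{lemm::noInteriorIntersection} and~\ref{lemm::noInternalPolygons} to write $X=\bigoplus_{i=1}^{r}\P_{(\gamma_i,f_i)}$ where $\{\gamma_1,\ldots,\gamma_r\}$ is an admissible collection of $\gpoint$-arcs on $(S,M,P)$. If $r<n:=|M_{\gpoint}|+|P|+b+2g-2$, then by Proposition~\ref{prop::numberOfArcs} the collection is not maximal, so there exists a $\gpoint$-arc $\gamma$ not in $\{\gamma_i\}$ such that $\{\gamma_1,\ldots,\gamma_r,\gamma\}$ is still admissible.

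The main technical step is to produce a grading $f$ on $\gamma$ so that $X\oplus\P_{(\gamma,f)}$ remains presilting. By admissibility, $\gamma$ meets the existing arcs only at their endpoints, so the only new morphisms between $\P_{(\gamma,f)}$ and the $\P_{(\gamma_i,f_i)}$ come from these endpoint intersections. At each endpoint $p$ of $\gamma$, the arcs of $\{\gamma_1,\ldots,\gamma_r,\gamma\}$ adjacent to $\gamma$ in the cyclic order around $p$ contribute morphisms of degrees determined linearly by the value of $f$ at the nearest intersection of $\gamma$ with $\Delta^*$. My plan is to analyze the arrows of the completed dissection locally at each endpoint and choose $f$ so that all these morphism degrees are non-positive, yielding a presilting object and contradicting the maximality of $X$. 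Once $r=n$ is established and $\{\gamma_i\}$ is an admissible dissection, I would complete the argument by showing that $X$ generates $\perf A$ --- for instance by using the connectivity of the admissible-dissection flip graph and the fact that a flip corresponds to a silting mutation (hence preserves silting-ness), linking $X$ to the evidently silting object $A\in\perf A$.

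The hard part will be verifying that the grading on the added arc $\gamma$ can always be chosen compatibly at both endpoints: since the winding rule of Section~\ref{sect::section1} forces the grading at one endpoint to be determined by its value at the other, only one free integer parameter is available, while constraints arise at every endpoint simultaneously. I expect this to be tractable by inserting the new arc in a small neighborhood of an existing arc (so that the endpoint constraints decouple), or by shifting the grading sufficiently negatively so that all positive-degree morphisms are precluded by the absence of the corresponding paths in the dissection quiver. A cleaner alternative would be to add arcs one at a time in an order dictated by a specific completion of the admissible collection, making the grading choice inductive and local.
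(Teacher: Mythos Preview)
Your forward direction is correct and matches the paper's treatment (a one-line appeal to standard silting theory, valid over any algebra).

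For the reverse direction, the paper proceeds quite differently and more directly. After writing the basic maximal presilting object $X$ as $\bigoplus_{i=1}^m \P_{(\gamma_i,f_i)}$ and invoking Lemmas~\ref{lemm::onlyFiniteArcsInSilting}--\ref{lemm::noInternalPolygons} to obtain that $\{\gamma_1,\dots,\gamma_m\}$ is an admissible $\gpoint$-dissection, the paper does \emph{not} attempt a Bongartz-type completion. Instead it shows directly that $X$ generates $K^b(\proj A)$: each arc $\delta_i$ of the original dissection $\Delta$ is homotopic to a concatenation of the $\gamma_j$ (because the $\gamma_j$ cut the surface into discs, Proposition~\ref{prop::cellularDecomposition}), and by \cite[Theorem~4.1]{OpperPlamondonSchroll} concatenation of arcs corresponds, up to shift, to taking cones of morphisms. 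Hence every indecomposable projective $\P_{(\delta_i,g_i)}$ lies in the thick subcategory generated by $X$, so $A\in\operatorname{thick}(X)$ and $X$ is silting. This argument never introduces a new graded arc, so the two-endpoint grading compatibility problem you flag simply does not arise.

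By contrast, your plan has two genuine weak points. First, the step ``add a $\gpoint$-arc $\gamma$ with a grading $f$ so that $X\oplus\P_{(\gamma,f)}$ stays presilting'' is exactly the difficulty you identify at the end, and your suggested fixes (place $\gamma$ near an existing arc, or shift the grading very negatively) are not clearly sufficient: the value of $f$ at one endpoint determines it at the other via the winding number $w^{\Delta^*}(\gamma)$, and there is no a priori reason the two interval constraints imposed by the neighbouring graded arcs at each endpoint overlap after this shift, for an \emph{arbitrary} admissible extension $\gamma$. Second, your generation argument via the flip graph and silting mutation is heavier than needed and risks circularity: to know that a flip realises a silting mutation you already need the endpoints of the flip to be silting objects, which is what you are trying to establish. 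The paper's concatenation-equals-cones argument is both shorter and independent of the particular gradings $f_i$; you should replace your flip-graph step with it.
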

\begin{proof}
 Over any algebra, a silting object is always maximal presilting.  We need to prove the converse for gentle algebras.  Let~$(S,M,P)$ be the marked surface associated to~$A$.  Assume that~$X$ is maximal presilting.  We need to show that~$X$ generates~$K^b(\proj(A))$.  Without loss of generality, we can assume that~$X$ is basic.  Using Lemma~\ref{lemm::onlyFiniteArcsInSilting}, we can write
 \[
  X = \bigoplus_{i=1}^m \P_{(\gamma_i, f_i)}.
 \]
 Combining Lemmas~\ref{lemm::noInteriorIntersection} and~\ref{lemm::noInternalPolygons}, we get that~$\gamma_1, \ldots, \gamma_m$ form an admissible~$\gpoint$-dissection.  By Proposition~\ref{prop::numberOfArcs}, we get that~$m$ is the number of pairwise non-isomorphic indecomposable direct summands of~$A$.
 
 Without loss of generality, we can assume that~$A$ is basic.  Write~$A = \bigoplus_{i=1}^m \P_{(\delta_i, g_i)}$.  It suffices to show that a shift of each~$\P_{(\delta_i, g_i)}$ is in the triangulated category generated by~$X$.
 
 This is achieved as follows.  Any~$\delta_i$ is the concatenation of the~$\gamma_j$.  To see this, note that the~$\gamma_j$ cut the surface into discs by Proposition~\ref{prop::cellularDecomposition}.  An arc crossing a disc is homotopic to a concatenation of some of the segments forming the boundary of this disc; applying this to all discs crossed by~$\delta_i$, we get that~$\delta_i$ is a concatenation of the~$\gamma_j$.  Finally, by~\cite[Theorem 4.1]{OpperPlamondonSchroll}, concatenation of arcs corresponds to taking the cones of morphisms between the associated objects (up to a shift) in~$D^b(\MOD A)$.  Thus the~$\P_{(\delta_i, g_i)}$ are in the triangulated category generated by the~$\P_{(\gamma_i, f_i)}$.
\end{proof}

\begin{corollary}\label{coro::tilting-silting}
 Let~$A$ be a gentle algebra with associated marked surface~$(S,M,P)$ and~$\gpoint$-dissection~$\Delta$.  Let~$(\gamma_1, f_1) \ldots, (\gamma_r, f_r)$ be graded $\gpoint$-arcs such that~$\gamma_1, \ldots, \gamma_r$ form an admissible~$\gpoint$-dissection~$\Delta'$ of~$(S,M,P)$.  
 
 For any~$\gpoint$-marked point, let~$\gamma_{i_1}, \ldots, \gamma_{i_s}$ be the arcs of~$\Delta'$ ending in that marked point in counter-clockwise order, and let~$p_{i_1}, \ldots, p_{i_s}$ be their respective intersection with the dual~$\rpoint$-dissection~$\Delta^*$ closest to the~$\gpoint$-marked point.
 \begin{enumerate}
  \item If, for every~$\gpoint$-marked point, we have that
  \[
   f_{i_1}(p_{i_1}) = \ldots = f_{i_s}(p_{i_s}),
  \]
  then~$\bigoplus_{i=1}^r \P_{(\gamma_i, f_i)}$ is a tilting object.
 
  \item If, for every~$\gpoint$-marked point, we have that
  \[
   f_{i_1}(p_{i_1}) \geq \ldots \geq f_{i_s}(p_{i_s}),
  \]
  then~$\bigoplus_{i=1}^r \P_{(\gamma_i, f_i)}$ is a silting object.

 \end{enumerate}

\end{corollary}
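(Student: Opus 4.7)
The plan is to translate the presilting and tilting conditions into explicit shift constraints at each $\gpoint$-marked point using the geometric description of morphisms recalled in Section~\ref{sect::geometricModel}, and then invoke Proposition~\ref{prop::maximal-presilting-are-silting} to upgrade maximal presilting to silting.

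First I would note that, since $\{\gamma_1, \ldots, \gamma_r\}$ is an admissible dissection, any two of these arcs are pairwise non-intersecting in the interior of $S$, so all intersections between distinct $\gamma_i, \gamma_j$ occur at shared $\gpoint$-marked endpoints. By \cite[Theorem 3.3]{OpperPlamondonSchroll}, the graded vector space $\Hom{D^b}(\P_{(\gamma_i, f_i)}, \P_{(\gamma_j, f_j)}[\ell])$ for $i \neq j$ is therefore entirely generated by such endpoint intersections.

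Next I would analyze the morphisms at a single marked point. If $\gamma_{i_1}, \ldots, \gamma_{i_s}$ meet at a $\gpoint$-marked point in counter-clockwise order, then the shared endpoint of any consecutive pair $\gamma_{i_a}, \gamma_{i_{a+1}}$ produces a graph map between $\P_{(\gamma_{i_a}, f_{i_a})}$ and $\P_{(\gamma_{i_{a+1}}, f_{i_{a+1}})}$ whose shift is exactly $f_{i_{a+1}}(p_{i_{a+1}}) - f_{i_a}(p_{i_a})$. This is precisely the shift inequality already used in the proof of Lemma~\ref{lemm::noInternalPolygons}. For non-consecutive pairs $a<b$, any morphism arising from this marked point either vanishes by the gentle relations $i_a \to i_{a+1} \to i_{a+2}$ or factors through the consecutive ones, yielding the telescoping shift $f_{i_b}(p_{i_b}) - f_{i_a}(p_{i_a})$; in either case the sign behavior is controlled by the consecutive shifts.

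For (2), the hypothesis $f_{i_1}(p_{i_1}) \geq \cdots \geq f_{i_s}(p_{i_s})$ forces each consecutive shift to be $\leq 0$, so no morphism between summands of $X := \bigoplus_{i=1}^r \P_{(\gamma_i, f_i)}$ lies in a strictly positive shift; thus $X$ is presilting. To deduce that $X$ is silting I would check that it is maximal presilting and apply Proposition~\ref{prop::maximal-presilting-are-silting}. If $X \oplus Y$ is presilting with $Y$ indecomposable, Lemma~\ref{lemm::onlyFiniteArcsInSilting} gives $Y = \P_{(\gamma, g)}$, and Lemmas~\ref{lemm::noInteriorIntersection} and~\ref{lemm::noInternalPolygons} force $\{\gamma_1, \ldots, \gamma_r, \gamma\}$ to be admissible. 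Maximality of $\Delta'$ (Proposition~\ref{prop::numberOfArcs}) forces $\gamma = \gamma_i$ for some $i$, and applying the shift formula to $\P_{(\gamma_i, f_i)}$ and $\P_{(\gamma_i, g)}$ at each endpoint of $\gamma_i$ in both counter-clockwise orientations yields opposing inequalities that force $g = f_i$. For (1), the stronger equality hypothesis forces every consecutive shift to vanish, hence $\Hom{D^b}(X, X[\ell]) = 0$ for all $\ell \neq 0$; combined with silting (which follows from (2), since equality implies the weak inequality), this shows $X$ is tilting.

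The main obstacle I anticipate is the second step: accounting for non-consecutive contributions at a marked point in a way that confirms the gentle relations really do cut the relevant shifts down to the consecutive ones, so that no stray positive-shift morphism is overlooked. A secondary technical point is the maximality argument, where one must exploit the two endpoints of $\gamma_i$ symmetrically to rule out alternative gradings on the same underlying arc.
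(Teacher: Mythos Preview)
The paper states Corollary~\ref{coro::tilting-silting} without proof, treating it as an immediate consequence of the description of morphisms in \cite[Theorem 3.3]{OpperPlamondonSchroll} together with Proposition~\ref{prop::maximal-presilting-are-silting}; your argument is exactly the expected unpacking of that implication and is correct in outline.

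Two small points. First, your remark that the composition $i_a\to i_{a+1}\to i_{a+2}$ at a single $\gpoint$-marked point ``vanishes by the gentle relations'' is not right: by Definition~\ref{defi::locallyGentleOfADissection}, a relation $i\to j\to k$ occurs only when the two arrows lie at \emph{different} marked points, so all such compositions at one vertex survive. This does no damage, since the resulting morphism still sits in shift $f_{i_b}(p_{i_b})-f_{i_a}(p_{i_a})\le 0$ by telescoping, which is all you need. Second, your maximality step can be streamlined: once $\gamma=\gamma_i$, the object $\P_{(\gamma_i,g)}$ is $\P_{(\gamma_i,f_i)}[\ell]$ for some $\ell$, and presilting of $\P_{(\gamma_i,f_i)}\oplus\P_{(\gamma_i,f_i)}[\ell]$ forces $\ell=0$ via the identity morphism in both directions. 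Alternatively, you could bypass maximality entirely and observe that the \emph{proof} of Proposition~\ref{prop::maximal-presilting-are-silting} already shows that any presilting object whose arcs form an admissible dissection generates $K^b(\proj A)$.
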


\section{Derived invariants for gentle algebras}\label{sect::derivedInvariants}

In this Section, combining the results of the previous sections, we show when two gentle algebras are derived equivalent. Namely, we prove the following. 

\begin{theorem}\label{theo::derivedInvariants}
 Let~$A$ and~$A'$ be gentle algebras, and let~$(S,M,P)$ and~$(S',M',P')$ be marked surfaces with no~$\gpoint$ punctures and with admissible~$\rpoint$-dissections~$\Delta^*$ and~$\Delta'^*$ associated to~$A$ and~$A'$, respectively.  
 Then~$A$ and~$A'$ are derived equivalent if and only if
 there exists an orientation-preserving homeomorphism~$\Phi:(S,M,P)\to (S',M',P')$ such that for any simple closed curve~$\delta$ on~$(S,M,P)$, we have
 \[
  w^{\Delta'^*}\big(\Phi(\delta)\big) = w^{\Delta^*}(\delta).
 \]

\end{theorem}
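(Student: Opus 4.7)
The plan is to prove both implications by passing through the classification of tilting objects as graded admissible dissections (Theorem~\ref{theo::siltingObjects}) and the bijection between admissible dissections and gentle algebras (Theorem~\ref{theo::bijectionDissectionsGentleAlgebras}).

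For the forward implication, suppose $A$ and $A'$ are derived equivalent. Choose a tilting object $T \in D^b(\MOD A)$ with $\End{D^b}(T) \cong A'$. By Theorem~\ref{theo::siltingObjects} we may write $T \cong \bigoplus_{i=1}^n \P_{(\gamma_i,f_i)}$, where $\tilde\Delta := \{\gamma_1,\dots,\gamma_n\}$ is an admissible $\gpoint$-dissection of $(S,M,P)$ equipped with gradings $f_i$. A direct computation of morphisms between the summands identifies $\End{D^b}(T)$ with the gentle algebra $A(\tilde\Delta)$ attached to the dissection $\tilde\Delta$. Hence $A(\tilde\Delta) \cong A'$, and Theorem~\ref{theo::bijectionDissectionsGentleAlgebras} produces an orientation-preserving homeomorphism $\Phi\colon (S,M,P)\to(S',M',P')$ sending $\tilde\Delta$ to $\Delta'$ and therefore $\tilde\Delta^*$ to $\Delta'^*$. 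Since the construction of $\eta$ depends only on the dual dissection, $\Phi^*(\eta(\Delta'^*))$ is homotopic to $\eta(\tilde\Delta^*)$; it remains to verify $\eta(\tilde\Delta^*) \simeq \eta(\Delta^*)$ on $(S,M,P)$.

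This comparison is the technical heart of the argument. By Proposition~\ref{propLF}(2), it suffices to show that the two line fields induce the same winding number on every simple closed curve. Using the tilting hypothesis, one shifts each summand of $T$ individually so that at every $\gpoint$-marked point the gradings $f_{i_1}(p_{i_1}),\dots,f_{i_s}(p_{i_s})$ of the arcs meeting there coincide, putting us in the situation of Corollary~\ref{coro::tilting-silting}(1). Then, for any simple closed curve $\delta$ on $S\setminus P$ in generic position with respect to both $\Delta^*$ and $\tilde\Delta^*$, the winding numbers $w^{\Delta^*}(\delta)$ and $w^{\tilde\Delta^*}(\delta)$ can each be expressed via Lemma~\ref{lemm::computingWindingNumber} as a signed count of polygon crossings, and a local telescoping argument — using Proposition~\ref{prop::degreesOfArcs}(3)--(5) and the grading equalities at marked points — forces their difference to vanish.

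For the reverse implication, set $\tilde\Delta := \Phi^{-1}(\Delta')$, an admissible $\gpoint$-dissection of $(S,M,P)$ with $\eta(\tilde\Delta^*) \simeq \eta(\Delta^*)$. Grade the arcs of $\tilde\Delta$ so that $f_{i_1}(p_{i_1}) = \dots = f_{i_s}(p_{i_s})$ at each $\gpoint$-marked point; the obstruction to a globally consistent choice lies in $H^1(S\setminus P,\bZ)$ and equals the class distinguishing $\eta(\Delta^*)$ from $\eta(\tilde\Delta^*)$ (Proposition~\ref{propLF}(5)), which vanishes by hypothesis. Corollary~\ref{coro::tilting-silting}(1) then yields that $T := \bigoplus_i \P_{(\gamma_i,f_i)}$ is a tilting object in $D^b(\MOD A)$ with endomorphism algebra $A(\tilde\Delta)$; since $\Phi$ preserves the local combinatorial data at marked points that define the quiver and relations, we get $A(\tilde\Delta) \cong A(\Delta') = A'$, and $T$ furnishes the required derived equivalence. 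The main obstacle, shared by both directions, is the $H^1$-obstruction analysis comparing $\eta(\Delta^*)$ with $\eta(\tilde\Delta^*)$: in the forward direction it must be shown to vanish as a consequence of the tilting gradings, and in the reverse direction its hypothesized vanishing must be exploited to produce compatible gradings — both resting on the same careful accounting of winding numbers via Lemma~\ref{lemm::computingWindingNumber} and Proposition~\ref{prop::degreesOfArcs}, together with the normalization of tilting gradings from Corollary~\ref{coro::tilting-silting}.
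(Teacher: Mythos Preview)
Your approach matches the paper's proof in both directions. One misstatement to correct: in the forward direction you should not ``shift each summand of $T$'' --- since $T$ is already tilting, the vanishing of $\Hom{D^b}(T,T[i])$ for all $i\neq 0$ combined with the description of morphisms at boundary intersections forces the gradings $f_{i_1}(p_{i_1}),\dots,f_{i_s}(p_{i_s})$ to coincide at each $\gpoint$-marked point automatically (this is the easy converse of Corollary~\ref{coro::tilting-silting}(1), which the paper uses implicitly), whereas shifting summands would change the object and need not preserve the tilting property; with that fixed, your telescoping computation is the paper's $\ell_T-r_T$ calculation, and your $H^1$-obstruction framing of the reverse direction is precisely the paper's explicit construction of the integers $n(x)$ via path sums of winding numbers, phrased cohomologically.
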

\begin{proof}
 Assume that~$A$ and~$A'$ are derived equivalent.
 Then by Rickard's theorem \cite{Rickard}, there exists a tilting object~$T$ in~$D^b(A)$ whose endomorphism ring is isomorphic to~$A'$.
 By Theorem~\ref{theo::siltingObjects}, there are graded curves \sloppy $(\gamma_1, f_1), \ldots, (\gamma_n, f_n)$ on~$(S,M,P)$ such that
 $T \cong \bigoplus_{i=1}^n \P_{(\gamma_i, f_i)}$. 
 By Theorem~\ref{theo::siltingObjects}, the curves~$\gamma_1 \ldots, \gamma_n$ form an admissible~$\gpoint$-dissection~$\Delta_T$ of~$(S,M,P)$.
 Moreover, using the description of the morphisms in the derived category given in~\cite[Theorem 3.3]{OpperPlamondonSchroll}, one obtains that the algebra~$A(\Delta_T)$ (see Definition~\ref{defi::locallyGentleOfADissection}) is isomorphic to~$A'$.
 By Theorem~\ref{theo::bijectionDissectionsGentleAlgebras}, there exists an orientation-preserving homeomorphism~$\Phi:(S,M,P)\to (S',M',P')$ sending~$\Delta_T$ to~$\Delta'$.
 
 Let~$\delta$ be any closed curve on~$(S,M,P)$.
 Then~$\delta$ is isotopic to a concatenation of arcs~$\delta_1, \ldots, \delta_s$ from the dissection~$\Delta$. 
 By Proposition~\ref{prop::degreesOfArcs}, we have
 \begin{eqnarray*}
  w^{\Delta^*}(\delta) &=& \ell - r + \sum_{i=1}^s  w^{\Delta^*}(\delta_i),
 \end{eqnarray*}
 where~$\ell$ and~$r$ are the number of~$i\in \{1, \ldots, s\}$ (taken modulo~$s$) such that the endpoint of~$\delta_i$ is to the left or to the right, respectively, of the concatenation~$\delta_i\delta_{i+1}$.
 Note that for each~$i$, we have that~$w^{\Delta^*}(\delta_i) = 0$, since the~$\delta_i$ are part of the initial~$\gpoint$-dissection~$\Delta$.
 Thus
 \begin{eqnarray*}
  w^{\Delta^*}(\delta) &=& \ell - r.
 \end{eqnarray*}

 Similarly,~$\delta$ is isotopic to a concatenation of arcs~$\gamma_{j_1}, \ldots, \gamma_{j_u}$ from the dissection~$\Delta_T$. 
 Let~$p_i$ and~$q_i$ be the first and last intersection point of~$\gamma_{j_i}$ with arcs of~$\Delta_T^*$.
 By Proposition~\ref{prop::degreesOfArcs}, we get that
 \begin{eqnarray*}
  w^{\Delta^*}(\delta) &=& \ell_T - r_T + \sum_{i=1}^u  w^{\Delta^*}( \gamma_{j_i}) \\
                         &=& \ell_T - r_T + \sum_{i=1}^u \big( f_{j_i}(q_i) - f_{j_{i}} (p_i) \big), 
 \end{eqnarray*}
 where~$\ell_T$ and~$r_T$ are the number of~$i\in \{1, \ldots, u\}$ (taken modulo~$u$) such that the endpoint of~$\gamma_{j_i}$ is to the left or to the right, respectively, of the concatenation~$\gamma_{j_i}\gamma_{j_{i+1}}$.
 
 Since~$T$ is tilting, we have that~$f_{j_i}(q_i) = f_{j_{i+1}}(p_{i+1})$ for all~$i$. Thus~$\sum_{i=1}^u \big( f_{j_i}(q_i) - f_{j_{i}} (p_i) \big) = 0$, so
 \begin{eqnarray*}
  w^{\Delta^*}(\delta) &=& \ell_T - r_T.
 \end{eqnarray*}
 
 Finally, consider the simple closed curve~$\Phi(\delta)$ on~$(S',M',P')$.
 It is a concatenation of the~$\gpoint$-arcs~$\Phi(\gamma_{j_1}), \ldots, \Phi(\gamma_{j_u})$.
 Since these arcs are in the initial~$\gpoint$-dissection~$\Delta'$ of~$(S',M',P')$, a similar calculation yields
 \begin{eqnarray*}
  w^{\Delta^*}(\delta) &=& \ell' - r' + \sum_{i=1}^u  w^{\Delta'^*}(\Phi(\gamma_{j_i})) \\
                         &=& \ell' - r',
 \end{eqnarray*}
  where~$\ell'$ and~$r'$ are the number of~$i\in \{1, \ldots, u\}$ (taken modulo~$u$) such that the endpoint of~$\Phi(\gamma_{j_i})$ is to the left or to the right, respectively, of the concatenation~$\Phi(\gamma_{j_i})\Phi(\gamma_{j_{i+1}})$.
  Since~$\Phi$ is a homeomorphism, we have that~$\ell' = \ell_T$ and~$r' = r_T$.
  Thus
  \[
   w^{\Delta'^*}(\Phi(\delta)) = \ell' - r' = \ell_T - r_T = w^{\Delta^*}(\delta).
  \]
  
  \medskip
  
  Assume now that there exists a~$\Phi$ as in the statement of the theorem.
  Denote by~$\tau'_1, \ldots, \tau'_n$ the~$\gpoint$-arcs of~$\Delta'$, by~$\tau_1, \ldots, \tau_n$ their preimages by~$\Phi$, and let~$g'_1, \ldots, g'_n$ be gradings such that~$A' = \bigoplus_{k=1}^n \P_{(\tau'_k, g'_k)}$.
  
  It is clear that~$A(\Phi^{-1}(\Delta')) \cong A'$.  If we can show that there exist gradings~$g_1, \ldots, g_n$ on the~$\gpoint$-arcs~$\tau_1, \ldots, \tau_n$ such that~$T = \bigoplus_{k=1}^n \P_{(\tau_k, g_k)}$ is a tilting object, then we would have that~$\End{D^b(A)}(T) \cong A'$ would be derived equivalent to~$A$, and the theorem would be proved.
  
  To construct such gradings, we first recursively associate an integer~$n(x)$ to each~$\gpoint$ marked point~$x$ of~$(S,M,P)$ as follows.  Let~$x_0$ be any~$\gpoint$ marked point, and let~$n(x_0) := 0$.  For any~$\gpoint$ marked point~$x$, there exists a path~$\tau_{i_1}^{\varepsilon_1}\cdots \tau_{i_r}^{\varepsilon_r}$ from~$x_0$ to~$x$  (where each~$\varepsilon_{i}$ is a~$\pm 1$), since~$\Delta'$ is connected.  Let~
  \[
   n(x) := \sum_{k=1}^r w^{\Delta^*}(\tau_{i_k}^{\varepsilon_k}).
  \]
  This integer does not depend on the choice of a path from~$x_0$ to~$x$: if~$\tau_{j_1}^{\eta_1}\cdots\tau_{j_s}^{\eta_s}$ is another such path, then let~$\gamma$ be the concatenation~$\tau_{j_1}^{\eta_1}\cdots\tau_{j_s}^{\eta_s}\tau_{i_r}^{-\varepsilon_r}\cdots \tau_{i_1}^{-\varepsilon_1}$.  Then by the hypothesis on~$\Phi$, we get that
  \[
   w^{\Delta^*}(\mathring{\gamma}) = w^{\Delta'^*}(\Phi(\mathring{\gamma})),
  \]
  where~$\mathring{\gamma}$ is as in Proposition~\ref{prop::degreesOfArcs}(4).
  Applying Lemma~\ref{lemm::computingWindingNumber} on both sides of this equation, we get that
  \[
   \sum_{k=1}^s w^{\Delta^*}(\tau_{j_k}^{\eta_k}) - \sum_{\ell=1}^{r} w^{\Delta^*}(\tau_{i_\ell}^{\varepsilon_\ell}) = \sum_{k=1}^s w^{\Delta'^*}(\tau_{j_k}'^{\eta_k}) - \sum_{\ell=1}^{r} w^{\Delta'^*}(\tau_{i_\ell}'^{\varepsilon_\ell}).
  \]
  Notice that the right-hand side of this equation is zero: indeed, since the arcs~$\tau'_i$ are part of the dissection~$\Delta'$, we have that~$w^{\Delta'^*}(\tau'_i) = 0$ for all~$i\in\{1, \ldots, n\}$.  Therefore
  \[
   \sum_{k=1}^s w^{\Delta^*}(\tau_{j_k}^{\eta_k}) = \sum_{\ell=1}^{r} w^{\Delta^*}(\tau_{i_\ell}^{\varepsilon_\ell}),
  \]
  so the integer~$n(x)$ does not depend on the choice of path from~$x_0$ to~$x$.
  
  Using the integers~$n(x)$, define the gradings~$g_1, \ldots, g_n$ on~$\tau_1, \ldots, \tau_n$ in such a way that for all~$i\in \{1, \ldots, n\}$, if~$x_i$ is the starting point of~$\tau_i$ and~$p_i$ is the intersection point of~$\tau_i$ with~$\Delta^*$ closest to~$x_i$, then~$g_i(p_i) = n(x_i)$.
  
  Let~$y_i$ be the endpoint of~$\tau_i$, and let~$q_i$ be the intersection point of~$\tau_i$ with~$\Delta^*$ closest to~$y_i$.  If we prove that~$g_i(q_i) = n(y_i)$, then by Corollary~\ref{coro::tilting-silting}, we would have that~$\bigoplus_{i=1}^n \P_{(\tau_i, g_i)}$ is a tilting object.  But
  \begin{eqnarray*}
   g_i(q_i) & = & g_i(p_i) + w^{\Delta^*}(\tau_i) \\
            & = & n(x_i) + w^{\Delta^*}(\tau_i) \\
            & = & \sum_{j=1}^r w^{\Delta^*}(\tau_{i_j}^{\varepsilon_j}) + w^{\Delta^*}(\tau_i) \quad \textrm{(for any path $\prod_{j=1}^r \tau_{i_j}^{\varepsilon_j}$ from~$x_0$ to~$x_i$)} \\
            & = & n(y_i).
  \end{eqnarray*}
  Therefore~$\bigoplus_{i=1}^n \P_{(\tau_i, g_i)}$ is a tilting object.  This proves that~$A$ and~$A'$ are derived equivalent.
  
\end{proof}

\begin{remark}
 Restricting Theorem~\ref{theo::derivedInvariants} to the closed curves circling boundary components of~$(S,M,P)$, 
 we reobtain the derived invariant of D.~Avella-Alaminos and~C.~Geiss \cite{AvellaAlaminosGeiss} by using \cite{OpperPlamondonSchroll}.
 
 Indeed, if $c$ is a curve surrounding a boundary component or a puncture (and having the boundary on the left), then $\Phi(c)$ is also a curve surrounding a boundary component, and the number of marked points on the respective boundary components coincide.  Then it is clear that the collections of pairs $(n_j,n_j-w^{\Delta^*}(c_j))$ for $j=1,\ldots b+p$ where $n_j$ is the number of~$\gpoint$ marked points on the boundary component attached to $c_j$ is a derived invariant. It is the AG invariant by \cite{OpperPlamondonSchroll}. 
\end{remark}

\section{Numerical derived invariants via Arf invariants}\label{sect::MCG}

\subsection{Action of the mapping class group}

The following section recalls results of \cite{LekiliPolishchuk}, that are a generalisation  from vector fields to line fields of results due to Kawazumi \cite{Kawazumi} that mainly follow from \cite{Johnson, Arf}).

Define the mapping class group of $\Sigma$ as 
$$\MCG (\Sigma)=\Diff^{+,\partial\Sigma}(\Sigma)/\Diff^{+,\partial\Sigma}_0(\Sigma)$$ where $\Diff^{+,\partial\Sigma}(\Sigma)$ is the group of orientation-preserving diffeomorphisms on $\Sigma$ that are the identity pointwise on the boundary, and $\Diff_0^{+,\partial\Sigma}(\Sigma)$ is the subgroup  of those isotopic to the identity. 

The mapping class group acts on the set $\LF(\Sigma)$ (see Proposition~\ref{propLF}). Indeed, to a line field $\eta$ and to a diffeomorphism $\Phi$, one can define the pullback of $\eta$ by $\Phi$  as  $$\Phi^*(\eta)(x)=[(T_x\Phi)^{-1}](\eta\circ \Phi(x)) \quad \forall x\in \Sigma.$$
If $\Phi$ is isotopic to the identity, then $\eta$ and $\Phi^*(\eta)$ are homotopic line fields, hence the action is well-defined.

\begin{lemma}\label{lemma w}
Let $\eta$ be a line field on $\Sigma$, and $\Phi\in \Diff^{+,\partial\Sigma}(\Sigma)$. For any $f\in\cC^1(\mathbb{S}^1,\Sigma)$ we have $w_{\Phi^*(\eta)}(f)=w_{\eta}(\Phi_*(f))$.
\end{lemma}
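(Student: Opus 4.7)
The plan is to exploit the naturality of the construction of the winding number with respect to the lift $\tilde\Phi := \mathbb{P}(T\Phi): \mathbb{P}(T\Sigma) \to \mathbb{P}(T\Sigma)$ of $\Phi$, which is a homeomorphism covering $\Phi$ and is fibrewise orientation-preserving because $\Phi$ itself is. Setting $g := \Phi_* f = \Phi \circ f$, the chain rule gives $Z^g = \tilde\Phi \circ Z^f$, while by the very definition of the pull-back line field we have $\eta \circ g = \tilde\Phi \circ (\Phi^*\eta) \circ f$. Thus, up to the auxiliary ``orthogonalization'' map $D$ appearing in the definition of $X^{\cdot,\cdot}$, the homeomorphism $\tilde\Phi$ intertwines the two data sets $(f, \Phi^*\eta)$ and $(g, \eta)$ used respectively to compute $w_{\Phi^*\eta}(f)$ and $w_\eta(g)$.

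The only obstruction to a direct comparison is that $D$ need not commute with $\tilde\Phi$, so $\tilde\Phi \circ X^{f, \Phi^*\eta} = (\tilde\Phi D \tilde\Phi^{-1}) \circ \eta \circ g$, which a priori differs from $X^{g,\eta} = D \circ \eta \circ g$. The key step is to show that $D$ and $\tilde\Phi D \tilde\Phi^{-1}$ are homotopic through continuous sections $\mathbb{P}(T\Sigma) \to \mathbb{P}(T\Sigma)$ that send each line to a transversal line. Fibrewise this reduces to path-connectedness of the space of orientation-preserving, fixed-point-free self-homeomorphisms of $\mathbb{RP}^1 \cong \mathbb{S}^1$ (any such homeomorphism is isotopic, through homeomorphisms of the same nature, to a non-trivial rotation, and such rotations form a path-connected arc in $SO(2)$). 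By Proposition~\ref{propLF}(2), applied to the closed loop underlying such a homotopy, or equivalently by a direct inspection of the definition, replacing $D$ by this conjugate in the construction of $X^{g, \eta}$ does not change $w_\eta(g)$.

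After this replacement, the identity $\tilde\Phi_*\bigl(\{Z^f\}^{-1}\{X^{f, \Phi^*\eta}\}\bigr) = \{Z^g\}^{-1}\{X^{g,\eta}\}$ holds in $\pi_1(\mathbb{P}(T\Sigma))$, with both members lying in the image of the fibre inclusion $\pi_1(\mathbb{S}^1, 1) \hookrightarrow \pi_1(\mathbb{P}(T\Sigma))$. Since $\tilde\Phi$ acts fibrewise as an orientation-preserving self-homeomorphism of $\mathbb{RP}^1$, it fixes the generator $e$; comparing with the defining equations of the two winding numbers then yields the desired equality $w_{\Phi^*\eta}(f) = w_\eta(\Phi_* f)$. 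The main obstacle will be the homotopy claim about $D$ versus $\tilde\Phi D \tilde\Phi^{-1}$; a secondary technicality is to reconcile the basepoint and initial tangent conditions at $z=1$ in the two winding number computations, which can be arranged by a small preparatory homotopy of $f$ near its basepoint, an adjustment that by Proposition~\ref{propLF}(1) does not affect either quantity involved.
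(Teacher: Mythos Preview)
Your proof is correct and follows essentially the same path as the paper's: both push the pair $(Z^f, X^{f,\Phi^*\eta})$ forward by $\tilde\Phi = [T\Phi]$ to obtain $(Z^{\Phi_*f}, X^{\Phi_*f,\eta})$, and then use that $\tilde\Phi_*$ restricts to the identity on the image of $\pi_1(\bS^1)\hookrightarrow\pi_1(\bP(T\Sigma))$ (equivalently, the commutative ladder of long exact sequences for the fibration $\bS^1\to\bP(T\Sigma)\to\Sigma$).

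The only difference is the handling of $D$. The paper simply omits $D$ in the computation, writing $X^{\Phi_*f,\eta}=\eta\circ\Phi\circ f$ and $X^{f,\Phi^*\eta}=[(T\Phi)^{-1}](\eta\circ\Phi\circ f)$; with $D$ removed, the intertwining identity $X^{\Phi_*f,\eta}=[T\Phi]_*X^{f,\Phi^*\eta}$ is immediate. This is legitimate because $D$ is fibrewise the rotation by $\pi/2$ and hence fibrewise isotopic to the identity through the rotation family $R_t$, $t\in[0,\pi/2]$, so composing with $D$ does not change the homotopy class of the $X$-loop. Your detour through $\tilde\Phi D\tilde\Phi^{-1}\simeq D$ reaches the same conclusion, but the transversality/fixed-point-free condition you impose on the intermediate maps is not needed: it is enough that both $D$ and $\tilde\Phi D\tilde\Phi^{-1}$ are homotopic to the identity through fibrewise maps, and for the conjugate this follows at once by conjugating the family $R_t$. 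With that simplification (and dropping the appeal to Proposition~\ref{propLF}) your argument collapses to the paper's.
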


\begin{proof} An immediate computation gives for each $z\in \bS^1$ 
$$Z^{\Phi_*(f)}(z)=[T_{f(z)}\Phi\circ T_zf(1)],\quad X^{\Phi_*(f),\eta}(z)=\eta\circ\Phi\circ f(z),$$
$$Z^f(z)=[T_zf(1)]\quad\textrm{and}\quad X^{f,\Phi^*(\eta)}(z)=[(T_{f(z)}\Phi)^{-1}](\eta\circ\Phi\circ f(z)).$$
Hence we have $X^{\Phi_*(f),\eta}=[T \Phi]_*(X^{f,\Phi^*(\eta)}$ and $Z^{\Phi_*(f)}=[T\Phi]_*(Z^f).$ Since the following diagram is commutative we get the result.

\[ \xymatrix{ 1 \ar[r] &\pi_1(\mathbb{S}^1,1)\ar[r]^-{\iota_*} \ar@{=}[d]
& \pi_1(\mathbb{P}(T\Sigma),[v_0])\ar[r]^-{p_*}\ar[d]^{[T\Phi]_*} & \pi_1(\Sigma,x_0)\ar[d]^{\Phi_*}\ar[r] & 1\\
1\ar[r] &\pi_1(\mathbb{S}^1,1)\ar[r]^-{\iota_*} 
& \pi_1(\mathbb{P}(T\Sigma),[T_{x_0}\Phi (v_0)])\ar[r]^-{p_*} & \pi_1(\Sigma,\Phi(x_0))\ar[r] & 1
 }\]

\end{proof}

\begin{remark} Combining the above result together with Proposition \ref{propLF} (2), Theorem \ref{theo::derivedInvariants}  can be reformulated as Theorem \ref{theo::intro}, that is, two gentle algebras $A(\Delta)$ and $A(\Delta')$ are derived equivalent if and only if there exists a homeomorphism of marked surfaces $\Phi:(S,M,P)\to (S',M',P')$ such that the line fields $\Phi^*(\eta(\Delta^*))$ and $\eta(\Delta'^*)$ are homotopic.
\end{remark}

Let $\Sigma$ be a smooth surface of genus $g$, with $b$ boundary components and $p$ punctures.
We denote by $\partial_1 \Sigma,\ldots,\partial_b\Sigma$ the boundary components of $\Sigma$.

Denote by $\cB=\{c_1,\ldots,c_{b+p}\}$ a set of simple closed curves such that for $j=1,\ldots b$, $c_j$ is homotopic to the boundary component $\partial_j \Sigma$ (being on the left of the curve), and so that $c_{b+k}$ is homotopic to a circle around the $k$-th puncture for $k=1,\ldots, p$. Let denote $\overline{\Sigma}$ the closed surface with empty boundary obtained by adding closed discs to each bounday component. Let 
$\cG=\{\alpha_1,\beta_1,\ldots,\alpha_g,\beta_g\}$be a set of closed simple curves, such that their image in $H_1(\overline{\Sigma},\mathbb Z)$ is a symplectic basis (with respect to the intersection form).

\[\scalebox{0.8}{\begin{tikzpicture}[scale=0.6, >=stealth]
\shadedraw[top color= blue!30] (-3,0)..controls (-3,1) and (-1,2)..(0,2)..controls (1,2) and (3,1).. (4,1)..controls (5,1) and (7,2).. (8,2)..controls (9,2) and (11,1)..(12,1)..controls (13,1) and (14,2).. (14,3)..controls (14,2.5) and (16,2.5)..(16,3)..controls (16,1) and (18,1)..(18,3)..controls (18,2.5) and (20,2.5).. (20,3)..controls (20,0) and  (18.1,-1)..(18.1,-3).. controls (18.1, -2.9) and (17.9,-2.9).. (17.9,-3).. controls (17.9,0) and (15.1,0).. (15.1,-3).. controls (15.1,-2.9) and (14.9,-2.9).. (14.9,-3).. controls (14.9,-1) and 
 (14,-1)..(12,-1).. controls (11,-1) and (9,-2)..(8,-2).. controls (7,-2) and (5,-1).. (4,-1).. controls (3,-1) and (1,-2).. (0,-2)..controls (-1,-2) and (-3,-1).. (-3,0);

\draw (14,3)..controls (14,3.5) and (16,3.5)..(16,3);
\draw (18,3)..controls (18,3.5) and (20,3.5).. (20,3);

\draw[fill=white] (-1,0)..controls (0,-0.3) and (0,-0.3)..(1,0)..controls (0,0.3) and (0,0.3).. (-1,0);
\draw (-1,0)--(-1.2,0.06);
\draw (1,0)--(1.2,0.06);

\draw[fill=white] (7,0)..controls (8,-0.3) and (8,-0.3)..(9,0)..controls (8,0.3) and (8,0.3).. (7,0);
\draw (7,0)--(6.8,0.06);
\draw (9,0)--(9.2,0.06);

\draw[thick] (0,0) circle (2 and 1);
\draw[thick,->] (2,0.1) --(2,-0.1);
\node at (2.5, 0) {$\alpha_1$};

\draw[thick] (0,-0.2)..controls (0.5,-0.2) and (0.5,-2)..(0,-2);
\draw[thick,<-](0.38,-1.2)--(0.38,-1.4);
\node at (0.8,-1.4) {$\beta_1$};

\draw[thick] (8,0) circle (2 and 1);
\draw[thick,->] (10,0.1) --(10,-0.1);
\node at (10.5, 0) {$\alpha_2$};
\draw[thick] (8,-0.2)..controls (8.5,-0.2) and (8.5,-2)..(8,-2);
\draw[thick,<-](8.38,-1.2)--(8.38,-1.4);
\node at (8.8,-1.4) {$\beta_2$};


\draw[thick] (13.9,2.5)..controls (13.9,2) and (16.1,2)..(16.1,2.5);
\draw[thick,->] (14.9,2.1)--(15.1,2.1);
\node at (15,1.8) {$c_1$};
\node at (15,3) {$\partial_1\Sigma$};

\draw[thick] (17.9,2.5)..controls (17.9,2) and (20,2)..(20,2.5);
\draw[thick,->] (18.9,2.1)--(19.1,2.1);
\node at (19,1.8) {$c_2$};
\node at (19,3) {$\partial_2\Sigma$};

 \draw[thick] (18.1,-3).. controls (18.1,-2.9) and (17.9, -2.9).. (17.9,-3)..controls (17.9,-3.1) and (18.1,-3.1).. (18.1,-3);
  \draw[thick] (15.1,-3).. controls (15.1,-2.9) and (14.9, -2.9).. (14.9,-3)..controls (14.9,-3.1) and (15.1,-3.1).. (15.1,-3);
  
  \node at (18,-1.1) {$c_3$};
  \node at (15,-1.1) {$c_4$};
  
    \draw[thick] (18.5,-1.5)..controls (18.5,-1.3) and (17.6,-1.3)..(17.6,-1.5);
  
  \draw[thick] (15.4,-1.5)..controls (15.4,-1.3) and (14.5,-1.3)..(14.5,-1.5);
 
 \draw[<-](18,-1.35)--(18.3,-1.35);
 \draw[<-](14.9,-1.35)--(15.2,-1.35);

\end{tikzpicture}}\]

The following result provides criterion to check wether two line fields are in the same $\MCG(\Sigma)$-orbit.

\begin{theorem}\cite[Theorem 1.2.4]{LekiliPolishchuk}\label{thmLP}
Let $\Sigma$ be a surface with boundary, and punctures and let $\cB$ and $\cG$ as above. 
Let $\eta$ and $\eta'$ be two line fields on $\Sigma$. Then $\eta$ and $\eta'$ are in the same $\MCG(\Sigma)$-orbit if and only if one the following occurs:
\begin{enumerate}

\item (for $g=0$) for any $j=1,\ldots,b$ we have $w_{\eta}(c_i)=w_{\eta'}(c_i)$.

\item (for $g=1$) for any $j=1,\ldots,b$ we have $w_{\eta}(c_i)=w_{\eta'}(c_i)$ and 

$$\gcd \{w_{\eta}(\gamma),w_{\eta}(c)+2,\gamma\in \cG, c\in \cB\}=\gcd \{w_{\eta'}(\gamma),w_{\eta}(c)+2,\gamma\in \cG, c\in \cB\}$$

\item (for $g\geq 2$) for any $j=1,\ldots,b$ we have $w_{\eta}(c_i)=w_{\eta'}(c_i)$ and one the following occurs:
\begin{enumerate}

\item there exist $\gamma$ and $\gamma'$ in $\cG\cup\cB$ such that $w_{\eta}(\gamma)$ and $w_{\eta'}(\gamma')$ are odd, or 

\item for any $\gamma$ in $\cG\cup \cB$, the numbers $w_{\eta}(\gamma)$ and $w_{\eta'}(\gamma)$ are even and there exists an $i$ with $w_{\eta}(c_i)=0 \ \mod 4$, or

\item for any $\gamma$ in $\cG\cup \cB$, the numbers $w_{\eta}(\gamma)$ and $w_{\eta'}(\gamma)$ are even, for any $i=1,\ldots ,b+p$ we have $w_{\eta}(c_i)=2 \ \mod 4$ and 
$$\sum_{i=1}^g(\frac{1}{2}w_{\eta}(\alpha_i)+1)(\frac{1}{2}w_{\eta}(\beta_i)+1)=\sum_{i=1}^g(\frac{1}{2}w_{\eta'}(\alpha_i)+1)(\frac{1}{2}w_{\eta}(\beta_i)+1).$$
\end{enumerate}
\end{enumerate}
\end{theorem}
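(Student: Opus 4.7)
The plan is to reduce the classification of line fields under $\MCG(\Sigma)$ to a classification of orbits in $H^1(\Sigma, \mathbb Z)$ (with suitable boundary data fixed), and then analyse these orbits case by case according to the genus.

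First, I would exploit the torsor structure of Proposition~\ref{propLF}(5): $\LF(\Sigma)$ is an $H^1(\Sigma, \mathbb Z)$-affine space via the map $\eta \mapsto w_\eta - w_{\eta_0}$ for a fixed reference line field $\eta_0$. By Lemma~\ref{lemma w}, the natural $\MCG(\Sigma)$-action on $\LF(\Sigma)$ is intertwined by this bijection with the $\MCG(\Sigma)$-action on $H^1(\Sigma, \mathbb Z)$. Since each $c_i \in \cB$ is $\MCG(\Sigma)$-invariant up to free homotopy (being homotopic to a boundary component or a puncture loop, both of which the mapping class group fixes set-wise), the integers $w_\eta(c_i)$ are automatically preserved by the action. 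This gives the ``only if'' direction for all the boundary data in (1), (2) and (3).

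Next, I would handle the ``only if'' direction for the genus-dependent invariants. The image of $\MCG(\Sigma)$ in $\operatorname{Aut}(H_1(\Sigma, \mathbb Z))$ preserves the classes $[c_i]$ and, on the symplectic quotient spanned by $[\alpha_i], [\beta_i]$, acts through $\operatorname{Sp}(2g, \mathbb Z)$. Thus the orbit of the class $\big(w_\eta(\alpha_i), w_\eta(\beta_i)\big)_{i=1,\dots,g}$ in $H^1$ is governed by $\operatorname{Sp}(2g, \mathbb Z)$ together with the translations coming from Dehn twists along boundary-parallel curves. For $g=1$ this reduces to an $SL(2,\mathbb Z)$-orbit in $\mathbb Z^2$ with an additional $\mathbb Z$-action by multiples of $w_\eta(c_j)+2$, whose invariant is precisely the gcd appearing in (3)(a). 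For $g \geq 2$ parity of winding numbers on $\cG \cup \cB$ is an $\operatorname{Sp}(2g,\mathbb F_2)$-invariant; and when all these parities are even and additionally $w_\eta(c_i)\equiv 2 \pmod 4$, the formula
\[
 q_\eta([\gamma]) := \tfrac{1}{2}w_\eta(\gamma) + 1 \pmod 2
\]
defines a quadratic refinement of the mod-$2$ intersection form on $H_1(\Sigma, \mathbb F_2)$ whose Arf invariant $\sum_i q_\eta(\alpha_i)q_\eta(\beta_i)$ is therefore $\MCG(\Sigma)$-invariant.

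For the ``if'' direction, I would construct the required element of $\MCG(\Sigma)$ explicitly out of Dehn twists. In genus~$0$, $H^1(\Sigma, \mathbb Z)$ is generated by duals of the $c_i$, so matching the $w_\eta(c_i)$ already forces $\eta$ and $\eta'$ to be homotopic. In genus~$1$, Dehn twists along $\alpha_1, \beta_1$ and the $c_j$ realise the full $SL(2,\mathbb Z)$-action on $\big(w_\eta(\alpha_1), w_\eta(\beta_1)\big)$ together with translations by $w_\eta(c_j)+2$; matching gcds then yields a suitable twist. In genus~$\geq 2$, I would invoke a generating set of $\MCG(\Sigma)$ by Dehn twists (as in \cite{Kawazumi}) together with the classical theorem of Arf \cite{Arf}: two quadratic refinements of a non-degenerate symplectic form over $\mathbb F_2$ are $\operatorname{Sp}(2g, \mathbb F_2)$-equivalent if and only if they share the same Arf invariant, and the mapping class group surjects onto $\operatorname{Sp}(2g, \mathbb F_2)$, allowing one to lift the algebraic equivalence to an actual mapping class.

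The main obstacle will be the genus $\geq 2$ case, and in particular subcase (3)(c). Two technical points must be verified. First, that $q_\eta$ really is a well-defined quadratic refinement of the mod-$2$ intersection form under the hypothesis $w_\eta(c_i) \equiv 2 \pmod 4$; this reduces, via Proposition~\ref{prop::degreesOfArcs}(3)--(5), to showing that summing winding numbers along a concatenation of two simple closed curves introduces precisely the intersection-pairing correction modulo $2$, the obstruction from boundary contributions being killed exactly by the $\equiv 2 \pmod 4$ hypothesis. Second, that matching Arf invariants is sufficient for MCG-equivalence, which requires tracking how boundary Dehn twists act modulo $4$ (not just modulo $2$) on winding numbers and checking that Arf is the sole remaining obstruction after all such moves. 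Both steps follow the template set by Johnson \cite{Johnson} and Kawazumi \cite{Kawazumi} in the closed, vector-field case and adapt to line fields on surfaces with boundary and punctures as carried out in \cite{LekiliPolishchuk}.
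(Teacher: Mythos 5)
This theorem is not proved in the paper at all: it is imported verbatim, with the citation \cite[Theorem 1.2.4]{LekiliPolishchuk}, and no argument for it appears anywhere in the text. So there is no internal proof to compare your proposal against; the only meaningful benchmark is the proof in Lekili--Polishchuk itself (building on Kawazumi, Johnson and Arf), and your outline does reproduce that proof's architecture: the torsor structure of $\LF(\Sigma)$ over $\Ho^1(\Sigma,\bZ)$ from Proposition~\ref{propLF}(5), the intertwining of the two $\MCG$-actions via Lemma~\ref{lemma w}, the automatic invariance of the boundary winding numbers, the gcd invariant in genus one arising from the affine action of Dehn twists, and the parity/Arf dichotomy in genus at least two, with the hypothesis $w_\eta(c_i)\equiv 2 \pmod 4$ being exactly what makes the quadratic form descend to the closed surface (compare Lemma~\ref{lemma::quadratic} later in the paper).

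As a proof, however, your text is not self-contained, and the missing content sits precisely where the theorem is hard. The ``only if'' direction is essentially fine. For the ``if'' direction you assert that Dehn twists realise enough of the affine-symplectic action on $\Ho^1(\Sigma,\bZ)$ to connect any two classes sharing the listed invariants; that orbit classification \emph{is} the theorem, and you close the two technical points by saying they ``follow the template \ldots as carried out in \cite{LekiliPolishchuk}'' --- that is, you ultimately cite the statement being proved. You also never address why in case (3)(b) (some $w_\eta(c_i)\equiv 0 \pmod 4$, i.e.\ the quadratic form does not vanish on the radical) no Arf-type obstruction survives; showing that both Arf values occur in a single orbit there is a separate argument, not a degenerate case of (3)(c). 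Since the paper's intended treatment is simply to quote the result, your sketch is a fair map of where the proof lives, but it should not be mistaken for a proof.
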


\subsection{Application to derived equivalences}

Let $S$, $M=M_{\gpoint}\cup M_{\rpoint}$ and $P=P_{\rpoint}$ be as in Section \ref{sect::locallyGentleAlgebras}. Denote by $g$ the genus of $\Sigma$, $b$ the number of boundary components of $S$ and $p$ the cardinal of $P$.   Define the following sets of simple closed curves on $S\setminus P$ as the previous section $\cB=\{c_1,\ldots,c_{b+p}\}$ and $\cG=\{\alpha_1,\beta_1,\ldots,\alpha_g,\beta_g\}$ .
 
For each $j=1,\ldots, b+p$, we denote by $n(j)$ the number of~$\gpoint$ marked points on $\partial_j S$.  

Combining Theorems~\ref{theo::derivedInvariants} and~\ref{thmLP}, we obtain a numerical criterion to decide when two gentle algebras are derived equivalent.

\begin{theorem}\label{thm::LP+main}

Let $A$ and $A'$ be two gentle algebras with associated dissected surfaces  $(S,M,P, \Delta)$ and $(S',M', P', \Delta')$, respectively. Let $\cG=\{\alpha_1,\ldots, \beta_g\}$, $\cB=\{c_1,\ldots,c_{b+p}\}$ (resp. $\cG'=\{\alpha'_1,\ldots, \beta'_{g'}\}$, $\cB'=\{c'_1,\ldots,c'_{b'+p'}\}$ ) subsets of simple closed curves on $S\setminus P$ (resp. $S'\setminus P'$) as before. Then the algebras $A$ and $A'$ are derived equivalent if and only if the following numbers coincide:

\begin{enumerate}

\item $g=g'$, $b=b'$, $\sharp M=\sharp M'$, $\sharp P=\sharp P'$; 

\item there exists a permutation $\sigma\in \mathfrak{S}_{b+p}$ such that $n(\sigma (j))=n'(j)$ and $w^{\Delta^*}(c_{\sigma(j)})=w^{\Delta'^*}(c'_{j})$, for any $j=1,\ldots, b$;
 
 \item for $g=g'\geq 1$ one of the following holds 
 
 \begin{enumerate}
 
 \item for $g=g'=1$,  we have
 
$$\gcd \{w^{\Delta^*}(\gamma),w^{\Delta^*}(c)+2,\gamma\in \cG, c\in \cB\}=\gcd \{w^{\Delta'^*}(\gamma'),w^{\Delta'^*}(c')+2,\gamma'\in \cG', c'\in \cB'\}$$

\item for $g=g'\geq 2$ one the following occurs:
\begin{enumerate}

\item there exist $\gamma\in \cG\cup\cB$ and $\gamma'\in \cG'\cup\cB'$  such that $w^{\Delta^*}(\gamma)$ and $w^{\Delta'^*}(\gamma')$ are odd, or 

\item for any $\gamma\in \cG\cup\cB$ and $\gamma'\in \cG'\cup\cB'$, the numbers $w^{\Delta^*}(\gamma)$ and $w^{\Delta'^*}(\gamma')$ are even and there exists an $i$ with $w^{\Delta^*}(c_i)=0 \ \mod 4$, or

\item for any $\gamma\in \cG\cup\cB$ and $\gamma'\in \cG'\cup\cB'$, the numbers $w^{\Delta^*}(\gamma)$ and $w^{\Delta'^*}(\gamma')$ are even and, for any $i=1,\ldots ,b+p$ we have $w^{\Delta^*}(c_i)=2 \ \mod 4$ and 
$$\sum_{i=1}^g(\frac{1}{2}w^{\Delta^*}(\alpha_i)+1)(\frac{1}{2}w^{\Delta^*}(\beta_i)+1)=\sum_{i=1}^g(\frac{1}{2}w^{\Delta'^*}(\alpha'_i)+1)(\frac{1}{2}w^{\Delta'^*}(\beta'_i)+1) \quad \mod 2$$

\end{enumerate}
\end{enumerate}

\end{enumerate}
\end{theorem}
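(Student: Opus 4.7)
The plan is to combine Theorem \ref{theo::derivedInvariants} with Theorem \ref{thmLP}, using Lemma \ref{lemma w} as a bridge between the ``preservation of winding numbers under a homeomorphism'' statement and the ``common $\MCG$-orbit of line fields'' statement. I would first reformulate Theorem \ref{theo::derivedInvariants} as follows: $A$ and $A'$ are derived equivalent if and only if there exists an orientation-preserving homeomorphism $\Phi:(S,M,P)\to (S',M',P')$ of marked surfaces such that $\Phi^*(\eta(\Delta'^*))$ and $\eta(\Delta^*)$ are homotopic as line fields on $\Sigma:=S\setminus P$. Indeed, by Proposition \ref{propLF}(2) the latter is equivalent to $w_{\Phi^*(\eta(\Delta'^*))}(\delta)=w_{\eta(\Delta^*)}(\delta)$ for every free homotopy class of simple closed curve $\delta$; by Lemma \ref{lemma w}, the left-hand side equals $w^{\Delta'^*}(\Phi(\delta))$, and extending from simple closed curves to all closed curves is harmless because winding numbers are additive under concatenation (Proposition \ref{prop::compose winding}).

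Next, I would extract the topological part. The existence of any orientation-preserving homeomorphism of marked surfaces forces $g=g'$, $b=b'$, $\sharp M=\sharp M'$, $\sharp P=\sharp P'$, and provides a bijection of boundary components and of punctures respecting the marked point decoration, i.e.\ a permutation $\sigma\in\mathfrak{S}_{b+p}$ with $n(\sigma(j))=n'(j)$. This gives (1) and the combinatorial half of (2). Conversely, once these data are fixed, by the classification of marked surfaces there exists some orientation-preserving homeomorphism $\Phi_0:(S,M,P)\to (S',M',P')$ realising $\sigma$, and every other homeomorphism realising $\sigma$ differs from $\Phi_0$ by a self-homeomorphism of $(S,M,P)$, hence by an element of $\MCG(\Sigma)$. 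Consequently, the existence of a $\Phi$ with $\Phi^*(\eta(\Delta'^*))$ homotopic to $\eta(\Delta^*)$ amounts to saying that $\Phi_0^*(\eta(\Delta'^*))$ and $\eta(\Delta^*)$ lie in the same $\MCG(\Sigma)$-orbit of (homotopy classes of) line fields on $\Sigma$.

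Now I would apply Theorem \ref{thmLP} to the pair of line fields $\eta(\Delta^*)$ and $\Phi_0^*(\eta(\Delta'^*))$ on $\Sigma$, tested against the basis $\cG\cup\cB$ on $\Sigma$. By Lemma \ref{lemma w}, for any closed curve $\gamma$ on $\Sigma$ we have $w_{\Phi_0^*(\eta(\Delta'^*))}(\gamma)=w^{\Delta'^*}(\Phi_0(\gamma))$. Choosing $\Phi_0$ so that it sends $c_{\sigma(j)}$ to a curve freely homotopic to $c'_j$ (possible since these are the canonical curves around matched boundary components and punctures), the equalities of boundary winding numbers demanded by Theorem \ref{thmLP} become precisely $w^{\Delta^*}(c_{\sigma(j)})=w^{\Delta'^*}(c'_j)$, i.e.\ condition (2). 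For the genus-dependent part, the gcd in case $g=1$ and the parity/Arf-invariant data in case $g\geq 2$ are symplectic invariants of the pair (line field, symplectic basis of $\Ho_1(\overline{\Sigma},\bZ)$); translating $\Phi_0^*(\eta(\Delta'^*))$ back through $\Phi_0$ and using the fact that $\Phi_0$ sends one symplectic basis to another, these invariants can equivalently be computed on $(\Sigma',\eta(\Delta'^*),\cG'\cup\cB')$, yielding the conditions in (3).

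The main obstacle is the last step: one must verify that the numerical invariants appearing in Theorem \ref{thmLP} are genuinely invariants of the $\MCG$-orbit of the line field and do not depend on the particular symplectic basis $\cG$ chosen on $\Sigma$, so that comparing them across the two surfaces via $\Phi_0$ (which need not send $\cG$ to $\cG'$, only to some symplectic basis) is legitimate. This basis-independence is precisely what makes the invariants in \cite[Theorem 1.2.4]{LekiliPolishchuk} -- the gcd, the parity, and the Arf-type quadratic form -- useful as a computable criterion; once it is in hand, the proof reduces to a bookkeeping of how winding numbers transform under $\Phi_0$ and $\Phi_0^*$.
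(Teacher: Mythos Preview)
Your proposal is correct and follows essentially the same approach as the paper's proof: reformulate Theorem~\ref{theo::derivedInvariants} via Lemma~\ref{lemma w} and Proposition~\ref{propLF}(2) as a statement about pulled-back line fields, reduce to an $\MCG$-orbit question, and invoke Theorem~\ref{thmLP}; the paper likewise isolates the basis-independence of the gcd (via Kawazumi), of the parity condition (via Proposition~\ref{propLF}(3)--(4)), and of the Arf invariant (via Lemma~\ref{lemma::quadratic}) as the key technical point. One small caution: the $\MCG(\Sigma)$ in Theorem~\ref{thmLP} consists of diffeomorphisms fixing the boundary \emph{pointwise}, so your assertion that two homeomorphisms of marked surfaces realising the same $\sigma$ differ by an element of $\MCG(\Sigma)$ is not literally true (they may differ by a rotation of a boundary circle permuting marked points); this is harmless for the argument since the numerical invariants in (2) and (3) are insensitive to such rotations, but it is worth saying so explicitly.
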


The first step in the proof of this theorem consists of showing that the different numbers computed from the winding numbers of the curves in $\cG$ above are independent of the choice of the set $\cG$.

In order to prove it we need the following lemma:

\begin{lemma}\label{lemma::quadratic}

In the set up above, assume that for any $\gamma\in \cG\cup \cB$, the winding number $w^{\Delta^*}(\gamma)$ is even. Then there exists a unique quadratic form $q_{\Delta^*}:\Ho_1(S\setminus P,\mathbb Z_2)\to \mathbb Z_2$ satisfying:
\begin{itemize}
\item for all $x$ and $y$ in $\Ho_1(S\setminus P,\mathbb Z_2)$, $$q_{\Delta^*}(x+y)=q_{\Delta^*}(x)+q_{\Delta_*}(y)+(x,y)$$ where $(-,-)$ is the intersection form on $\Ho_1(S\setminus P,\mathbb Z_2)$ and 
\item for any simple closed curve $\gamma$ in $S\setminus P$
$$q_{\Delta^*}([\gamma])=\frac{1}{2}w^{\Delta^*}(\gamma)+1.$$
\end{itemize}
If moreover for all $i=1,\ldots, b+p$ we have $w^{\Delta^*}(c_i)=2\ \mod 4$, the form $q_{\Delta^*}$ descends to a quadratic form $\overline{q}_{\Delta^*}$ on $H_1(\overline{S},\mathbb Z_2)$. Its Arf invariant is given by the formula $${\rm Arf}(\overline{q}_{\Delta^*})=\sum_{i=1}^g \overline{q}_{\Delta^*}([\bar{a_i}])\overline{q}_{\Delta^*}([\bar{b_i}])$$ for any $(\bar{a_1},\bar{b_1},\ldots,\bar{a_g},\bar{b_g})$ symplectic geometric basis of $H_1(\overline{S},\mathbb Z_2)$.

\end{lemma}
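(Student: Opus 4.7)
The plan is to construct the quadratic form $q_{\Delta^*}$ directly, in the spirit of the Johnson--Arf correspondence between line fields with even winding numbers and quadratic refinements of the $\mathbb Z_2$-intersection form. First I would extend the evenness hypothesis to all closed curves: by Proposition \ref{propLF}(4) the parity of $w^{\Delta^*}$ factors through $\Ho_1(S\setminus P,\mathbb Z_2)$, and $\cG\cup\cB$ generates this group, so $w^{\Delta^*}(\gamma)$ is even for every closed curve $\gamma$. Hence $\tfrac12 w^{\Delta^*}(\gamma)+1 \in \mathbb Z_2$ makes sense for every simple closed curve. I would then fix a basis of $\Ho_1(S\setminus P,\mathbb Z_2)$ extracted from $\cG\cup\cB$ (modulo the boundary relation $\sum_j [c_j]=0$), \emph{define} $q_{\Delta^*}$ on that basis by the prescribed formula, and extend it to the whole group by iterating the refinement identity. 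Uniqueness is then automatic; the substance of the statement is that the extension still satisfies $q_{\Delta^*}([\gamma])=\tfrac12 w^{\Delta^*}(\gamma)+1$ for \emph{every} simple closed curve, not only the chosen basis ones.

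The main obstacle is precisely this last point, and it reduces to a local handle-slide identity: for any two simple closed curves $\alpha,\beta$ meeting transversely at a single point, the band sum $\alpha\#\beta$ is a simple closed curve representing $[\alpha]+[\beta]$, and
$$\tfrac12 w^{\Delta^*}(\alpha\#\beta) \;\equiv\; \tfrac12 w^{\Delta^*}(\alpha)+\tfrac12 w^{\Delta^*}(\beta) \pmod 2,$$
which rearranges to $q_{\Delta^*}([\alpha]+[\beta])=q_{\Delta^*}([\alpha])+q_{\Delta^*}([\beta])+(\alpha,\beta)$, since $(\alpha,\beta)=1$. This identity is verified by a direct computation with the line field $\eta(\Delta^*)$ near the intersection point, using Proposition \ref{prop::compose winding} and Proposition \ref{prop::degreesOfArcs}(3)--(4). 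Because every simple closed curve can be reached from the chosen basis by a finite sequence of such band sums (the standard handle-slide moves in surface topology), the formula propagates from the basis to all simple closed curves, which completes the construction.

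For the second half of the lemma, under the extra hypothesis $w^{\Delta^*}(c_i)\equiv 2\pmod 4$ we get $q_{\Delta^*}([c_i])=0$; moreover $c_i$ is homotopic to a boundary or puncture loop, so $([c_i],x)=0$ for every $x\in \Ho_1(S\setminus P,\mathbb Z_2)$, and the refinement identity gives $q_{\Delta^*}(x+[c_i])=q_{\Delta^*}(x)$. Since the kernel of the surjection $\Ho_1(S\setminus P,\mathbb Z_2) \twoheadrightarrow H_1(\bar S,\mathbb Z_2)$ is spanned by the $[c_i]$, the form $q_{\Delta^*}$ descends to a well-defined quadratic form $\bar q_{\Delta^*}$ on $H_1(\bar S,\mathbb Z_2)$. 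The asserted Arf-invariant formula is then the standard expression for the Arf invariant of a quadratic refinement of a symplectic form over $\mathbb F_2$ evaluated in any symplectic basis, whose independence of the basis is classical (see, e.g., \cite{LekiliPolishchuk}).
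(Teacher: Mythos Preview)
Your argument is correct in outline and amounts to a self-contained sketch of Johnson's construction, which the paper obtains instead by citation. The paper's proof is very brief: after noting (as you do) that evenness on $\cG\cup\cB$ propagates to all closed curves via Proposition~\ref{propLF}(3)--(4), it simply invokes \cite[Theorem~1.A]{Johnson} and its adaptation to surfaces with boundary and punctures in \cite[Proposition~1.2.2]{LekiliPolishchuk} for the existence and well-definedness of~$q_{\Delta^*}$; it then observes that $w^{\Delta^*}(c_i)\equiv 2\pmod 4$ is equivalent to $q_{\Delta^*}([c_i])=0$, so the descent to $H_1(\overline S,\mathbb Z_2)$ is immediate, and cites \cite{Arf} for the basis-independence of the Arf-invariant formula. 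Your band-sum identity for curves meeting once is precisely the local heart of Johnson's argument, and your assertion that ``every simple closed curve can be reached from the chosen basis by a finite sequence of such band sums'' is exactly the transitivity statement that Johnson establishes. So you trade brevity for self-containment; both routes are valid, but the step you label ``standard handle-slide moves in surface topology'' is the genuinely non-trivial input and deserves a reference to \cite{Johnson} in its own right rather than being asserted.
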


\begin{proof}

First note that by Proposition \ref{propLF} (4) and (3), if all winding number of the basis are even, then the winding number of any curve is even, and the line field comes from a vector field. 

The unicity of the map is clear.
The fact that $q_{\Delta^*}$ is well-defined comes from \cite{Johnson} [Theorem 1.A] adpated to the case where the surface has boundary components and punctures (see \cite{LekiliPolishchuk}[Proposition 1.2.2]). 

The equality $w^{\Delta^*}(c_i)=2\  \mod\  4$ is equivalent to $q_{\Delta^*}(c_i)=0$. Hence the second statement is clear.

The computation of the Arf invariant goes back to \cite{Arf}.

\end{proof}

\begin{proof}[Proof of Theorem \ref{thm::LP+main}]

We first show that all numbers associated to the surface $S'$ involved in the statement can be replaced by the image of the curves $\cG$ and $\cB$ through a homeomorphism $S\to S'$.

The item $(1)$ together with the fact that there exists a permutation $\sigma$ with $n(\sigma(j))=n(j)$ is equivalent to the existence of a orientation-preserving homeomorphism $\Phi:S\to S'$ sending $M$ on $M'$ and $P$ to $P'$. 
Moreover for any homeomorphism for any $i=1,\ldots, b+p$, $\Phi(c_i)$ is a curve isotopic to $c'_j$ where $n'(j)=n(i)$, so up to renumbering the boundary components and the punctures of $S'$, we can assume that $\Phi(c_i)=c'_i$ for any $i=1\ldots, b+p$.

Now if the genus $g'$ is $1$, it is proven in \cite{Kawazumi}, that 
$$\gcd (w^{\Delta'^*}(\gamma'),w^{\Delta^*}(c')+2,\gamma\in \cG', c\in \cB'\}=\gcd\{w^{\Delta^*}(\gamma), \gamma \textrm{ non separating closed curve}\}.$$ Therefore this number is independent of the choice of the set $\cG$ and we have $$\gcd (w^{\Delta'^*}(\gamma'),w^{\Delta^*}(c')+2,\gamma\in \cG', c\in \cB'\}=\gcd (w^{\Delta'^*}(\Phi(\gamma)),w^{\Delta'^*}(\Phi(c))+2,\gamma\in \cG, c\in \cB\}.$$

The first item for genus $g'\geq 2$ is equivalent to the fact that the line field $\eta(\Delta'^*)$ does not come from a vector field (see Proposition \ref{propLF} (3) and (4)). Therefore it is equivalent to the fact that there exists a $\gamma\in \cG$ with  $w^{\Delta'^*}(\Phi(\gamma))$ odd.

The second item is also clearly independent of the choice of $\cG$. 

In the case of the item (iii), we have $$\sum_{i=1}^g(\frac{1}{2}w^{\Delta'^*}(\alpha'_i)+1)(\frac{1}{2}w^{\Delta'^*}(\beta'_i)+1)=\sum_{i=1}^g(\frac{1}{2}w^{\Delta'^*}(\Phi(\alpha_i))+1)(\frac{1}{2}w^{\Delta'^*}(\Phi(\beta_i))+1) \quad \mod 2$$ using Lemma \ref{lemma::quadratic}.

\medskip
  
We prove the statement for $g=1$, the proof is similar for $g=0$ and $g\geq 2$.

Suppose that $A$ and $A'$ are derived equivalent. Then by Theorem \ref{theo::derivedInvariants} there exists an orientation preserving homeomorphism $\Phi:S\to S'$ inducing bijections from~$M$ to~$M'$ and from~$P$ to~$P'$ and such that for any $\gamma\in \pi_{1}^{\rm free}(S\backslash P)$,  we have $w^{\Delta^*}(\gamma)=w^{\Delta'^*}(\Phi_*(\gamma))$. 

We can assume that $\Phi$ is a diffeomorphism. Indeed since $S$ and $S'$ are homeomorphic, they are also diffeomorphic. Let $\Psi:S\to S'$ be a diffeomorphism. Then $\Psi^{-1}\circ \Phi$ is a homeomorphism of $S$, and there exists $\bar{\Psi}$ a diffeomorphism of $S$  which is isotopic to $\Psi^{-1}\circ \Phi$. Therefore $\bar{\Psi}$ and $\Psi^{-1}\circ \Phi$ have the same action on the fundamental group. Hence $\Psi\circ\bar{\Psi}$ is a diffeomorphism from $S$ to $S'$ which have the same action as $\Phi$ on the fundamental groups of $S$ and $S'$.

Denote by $\eta$ (resp. $\eta'$) the line fields corresponding to $\Delta$ (resp. $\Delta'$) as defined in Lemma  \ref{lemma Delta eta}. So we have that, for any simple closed curve $\gamma$, 
$$
  w_{\eta}(\gamma)=w^{\Delta^*}(\gamma)=w^{\Delta'^*}(\Phi_*(\gamma))=w_{\eta'}(\Phi_*(\gamma))=w_{\Phi^*(\eta')}(\gamma).
$$ 
Hence the line fields $\eta$ and $\Phi^*(\eta')$ are homotopic by Proposition \ref{propLF}. Moreover, the above equality implies that all the conditions of the theorem are satisfied.

\medskip

Assume now that there exists a homeomorphism~$\Phi:S\to S'$ (which can be again assumed to be a diffeomorphism) satisfying the conditions of the theorem.  As above, denote by~$\eta$ and~$\eta'$ the line fields corresponding to~$\Delta$ and~$\Delta'$, respectively, as defined in Lemma~\ref{lemma Delta eta}.  Then the line field~$\Phi^*(\eta')$ is such that for all~$c\in \cB$, 
\[
 w_{\Phi^*(\eta')}(c) = w^{\Delta'^*}(\Phi_*(c)) = w^{\Delta^*}(c) = w_{\eta}(c).
\]
Moreover, we have that
$$\gcd (w^{\Delta'^*}(\gamma'),w^{\Delta^*}(c')+2,\gamma\in \cG', c\in \cB'\}=\gcd (w^{\Delta'^*}(\Phi(\gamma)),w^{\Delta'^*}(\Phi(c))+2,\gamma\in \cG, c\in \cB\}$$
 and the hypotheses of the theorem translate to the conditions of Theorem~\ref{thmLP} applied to~$\eta$ and~$\Phi^*(\eta)$.  Therefore, by Theorem~\ref{thmLP}, the line fields~$\eta$ and~$\Phi^*(\eta)$ are in the same~$\MCG(S\backslash P)$-orbit.

Let~$\varphi$ a diffeomorphism of $S\setminus P$ be such that~$\varphi^*\Phi^*(\eta')$ is homotopic to~$\eta$.  Then for all~$\gamma \in \pi_1^{\rm free}(S\backslash P)$, we have that
\[
 w^{\Delta^*}(\gamma) = w_{\eta}(\gamma) = w_{\varphi^*\Phi^*(\eta')}(\gamma) = w_{\eta'}(\Phi_*\varphi_*\gamma) = w^{\Delta'^*}(\Phi_*\varphi_*\gamma).
\]
Therefore, the homeomorphism~$\Phi\circ \varphi$ satisfies the hypotheses of Theorem~\ref{theo::derivedInvariants}.  Thus~$A$ and~$A'$ are derived equivalent.
\end{proof}

\begin{remark}
 If one starts with a gentle algebras~$A$, the marked surface as constructed in \cite{OpperPlamondonSchroll} or \cite{PaluPilaudPlamondon2} is given either by a thickening of a ribbon graph or by glueing polygons. Finding the genus and number of boundary components and marked points of the surface present no difficulty (see for instance~\cite[Remark 4.11]{PaluPilaudPlamondon2}), and neither does finding the curves $c_i$ (they are constructed in~\cite{AvellaAlaminosGeiss}), but finding a geometric symplectic basis $\cG$ may be much more complicated in high genus. 
\end{remark}

\section{Reproving known results on gentle algebras}\label{sect::reprovingKnownResults}

\subsection{The class of gentle algebras is stable under derived equivalences}

The following result was first proved by J.~Schr\"oer and A.~Zimmermann in \cite{SchroerZimmermann}.
Their proof relies on the embedding of the bounded derived category of an algebra into the stable module category of its repetitive algebra (see \cite[Section II.2]{Happel}).
We provide a new proof using the geometric model of the bounded derived category of gentle algebras.
\begin{theorem}[\cite{SchroerZimmermann}]\label{theo::gentle-is-derived-closed}
 Let~$A$ and~$B$ be two finite-dimensional~$k$-algebras which are derived-equivalent.
 If~$A$ is gentle, then so is~$B$.
\end{theorem}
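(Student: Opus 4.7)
The plan is to reduce to the geometric description of silting objects already established in Section~\ref{sect::siltingObjects}. By Rickard's theorem, since $A$ and $B$ are derived equivalent, there exists a tilting object $T$ in $D^b(\MOD A)$ whose endomorphism algebra is isomorphic to $B^{op}$ (or $B$, depending on convention; since the opposite of a gentle algebra is clearly gentle by reversing all arrows, either convention will do). So the task reduces to showing that the endomorphism algebra of any tilting object in $D^b(\MOD A)$ is gentle whenever $A$ is.

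Since $A$ is gentle, by Theorem~\ref{theo::bijectionDissectionsGentleAlgebras} there exists a marked surface $(S,M,P)$ with $P_{\gpoint}=\emptyset$ and an admissible $\gpoint$-dissection $\Delta$ such that $A\cong A(\Delta)$. The tilting object $T$ is in particular a basic silting object (after passing to a basic representative), so by Theorem~\ref{theo::siltingObjects}, we can write
\[
 T \cong \bigoplus_{i=1}^n \P_{(\gamma_i,f_i)},
\]
where $\Delta_T := \{\gamma_1,\ldots,\gamma_n\}$ is an admissible $\gpoint$-dissection of $(S,M,P)$. Note that $(S,M,P)$ still has no $\gpoint$-punctures.

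Next I would argue that the endomorphism algebra $\End{D^b(A)}(T)$ is isomorphic to $A(\Delta_T)$. This is the point already used in the proof of Theorem~\ref{theo::derivedInvariants}: the description of morphisms between string objects recalled in Section~\ref{sect::geometricModel} (and proved in~\cite[Theorem 3.3]{OpperPlamondonSchroll}) implies that the graded intersections of the $\gamma_i$ at $\gpoint$-marked points precisely give the arrows of the quiver $Q(\Delta_T)$, and that the relations between consecutive such morphisms forced by the local geometry at each $\gpoint$ correspond to the generating relations of $I(\Delta_T)$ in Definition~\ref{defi::locallyGentleOfADissection}. Since $T$ is tilting, there are no negative self-extensions, which ensures that the gradings $(f_i)$ are compatible in the sense of Corollary~\ref{coro::tilting-silting}(1), so that no additional non-degree-zero morphisms appear and $\End{D^b(A)}(T)$ is concentrated in degree zero and identified with $A(\Delta_T)$.

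Finally, since $P_{\gpoint}=\emptyset$, Theorem~\ref{theo::bijectionDissectionsGentleAlgebras} tells us that $A(\Delta_T)$ is a (finite-dimensional) gentle algebra, hence so is $B$. The main subtlety in this argument, and the step I would expect to need the most care, is the identification $\End{D^b(A)}(T)\cong A(\Delta_T)$: one has to check that the counting of arrows and relations at each $\gpoint$-marked point in Definition~\ref{defi::locallyGentleOfADissection} matches precisely the non-zero morphism spaces between the summands $\P_{(\gamma_i,f_i)}$ produced by the geometric model. Everything else is a direct application of the structural results already established in the paper.
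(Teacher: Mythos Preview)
Your proposal is correct and follows essentially the same route as the paper's proof: use Rickard's theorem to produce a tilting object~$T$, invoke Theorem~\ref{theo::siltingObjects} to realize~$T$ by an admissible $\gpoint$-dissection~$\Delta_T$, and then appeal to the description of morphisms in~\cite[Theorem~3.3]{OpperPlamondonSchroll} to conclude that~$\End{D^b(A)}(T)$ is gentle. The paper's version is more terse (it simply says the endomorphism algebra ``has to be gentle'' rather than identifying it with~$A(\Delta_T)$, though that identification is spelled out later in the proof of Theorem~\ref{theo::derivedInvariants}), and it does not invoke Corollary~\ref{coro::tilting-silting}(1); your extra remark about the gradings being compatible is plausible but note that Corollary~\ref{coro::tilting-silting}(1) only states one implication, so you are tacitly using its converse.
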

\begin{proof} 
 Since~$A$ is gentle, Theorem~\ref{theo::bijectionDissectionsGentleAlgebras} implies that there exists a marked surface~$(S,M,P)$
 and an admissible~$\gpoint$-dissection~$\Delta$ such that~$A \cong A(\Delta)$.
 By a theorem of J.~Rickard \cite{Rickard}, there exists a tilting object~$T$ in~$D^b(\MOD A)$
 whose endomorphism algebra is isomorphic to~$B$.
 By Theorem~\ref{theo::siltingObjects}, the tilting object~$T$ is isomorphic to a direct sum~$\bigoplus_{i=1}^n \P_{(\gamma_i, f_i)}$,
 where~$\{\gamma_1, \ldots, \gamma_n\}$ is an admissible~$\gpoint$-dissection of~$(S,M,P)$.
 Using \cite[Theorem 3.3]{OpperPlamondonSchroll}, one sees that the endomorphism algebra of~$T$ has to be gentle.
 Thus~$B$ is gentle.
\end{proof}

\begin{remark}
 While it is assumed in \cite{SchroerZimmermann} that the field~$k$ is algebraically closed, the proofs in that paper seems to be valid over any field.
 The above argument also works for any field.
\end{remark}

\subsection{Gentle algebras are Gorenstein}

The following result was first proved by~C.Geiss and I.~Reiten \cite{GeissReiten}. 
Recall that a finite-dimensional algebra is~\emph{Iwanaga--Gorenstein} if the projective dimensions of its injective modules
and the injective dimensions of its projective modules are bounded.

We provide a new proof of the following result.
\begin{theorem}[\cite{GeissReiten}]\label{theo::gentle-is-Gorenstein}
 Gentle algebras are Iwanaga--Gorenstein.
\end{theorem}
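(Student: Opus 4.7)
The plan is to reduce to showing that $DA$ lies in $\perf(A) = K^b(\proj A)$. This is classically equivalent to $\mathrm{inj.dim}({}_AA) < \infty$. Since the opposite algebra $A^{\mathrm{op}}$ is again gentle (reverse all arrows and relations), applying the same argument on the other side yields $\mathrm{inj.dim}(A_A) < \infty$, giving the Iwanaga--Gorenstein property.

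First I would decompose $DA = \bigoplus_{i=1}^n I_i$ into indecomposable injective summands indexed by the vertices of $Q(\Delta)$. Each $I_i$ is a finite-dimensional indecomposable $A$-module, hence a string module. By \cite[Theorem 3.3]{OpperPlamondonSchroll}, $I_i$ is isomorphic to an object of the form $\P_{(\epsilon_i, h_i)}$ for some graded curve on $(S,M,P)$; since $I_i$ is a string module rather than a band module, the underlying curve $\epsilon_i$ is an arc.

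The central step is to establish that $\epsilon_i$ is a $\gpoint$-arc in the sense of Definition~\ref{defi::arcs}, i.e., its endpoints lie in $M_{\gpoint} \cup P_{\gpoint} = M_{\gpoint}$ (the last equality uses that $P_{\gpoint} = \emptyset$ for gentle algebras, by Theorem~\ref{theo::bijectionDissectionsGentleAlgebras}). Concretely, I expect $\epsilon_i$ to be obtained from $\gamma_i \in \Delta$ (the arc associated to $P_i$) by replacing each endpoint $x \in M_{\gpoint}$ of $\gamma_i$ with the next $\gpoint$-marked point encountered along the boundary component of $x$, with grading $h_i$ dictated by the local combinatorics at each endpoint. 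This identification can be verified by directly computing the minimal injective coresolution of $P_i$ from the definition of $A(\Delta)$, and comparing with the description of morphisms between objects $\P_{(\gamma, f)}$ recalled before Theorem~\ref{theo::siltingObjects}.

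Once the key claim is established, Lemma~\ref{lemm::onlyFiniteArcsInSilting} (invoking \cite[Theorem 2.12]{OpperPlamondonSchroll}) gives $I_i = \P_{(\epsilon_i, h_i)} \in K^b(\proj A)$ for each $i$; hence $DA \in \perf(A)$, as required. The main obstacle is the precise geometric identification of $\epsilon_i$: one must carefully track the rotation around each $\gpoint$-marked point, including the degenerate cases in which the rotated curve becomes homotopic to a boundary loop or to $\gamma_i$ itself, and verify compatibility with the grading so that $\P_{(\epsilon_i, h_i)}$ is indeed concentrated in degree zero and isomorphic to the injective module $I_i$.
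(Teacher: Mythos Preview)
Your proposal is correct and follows essentially the same route as the paper. The paper streamlines your ``central step'' by writing $I_i = \nu P_i$ via the Nakayama functor and invoking \cite[Theorem~5.1]{OpperPlamondonSchroll}, which says that $\nu$ acts on $\P_{(\gamma,f)}$ precisely by rotating both endpoints of $\gamma$ along their boundary components; this immediately identifies $\epsilon_i$ as a finite $\gpoint$-arc and bypasses the direct verification you outline.
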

\begin{proof}
 Since the opposite algebra of a gentle algebra is gentle,
 it suffices to show that the projective dimensions of its injective modules are bounded;
 the fact that the injective dimensions of its projective modules are bounded is then obtained by applying duality.
 
 Let~$A$ be a gentle algebra.
 By Theorem~\ref{theo::bijectionDissectionsGentleAlgebras}, there exists a marked surface~$(S,M,P)$
 and an admissible~$\gpoint$-dissection~$\Delta$ such that~$A \cong A(\Delta)$.
 
 Let~$I$ be an indecomposable injective module.
 Then~$I = \nu P$ for an indecomposable projective module~$P$, where~$\nu$ is the Nakayama functor.
 Note that~$P = \P_{(\gamma,f)}$, where~$\gamma$ is one of the arcs of~$\Delta$.
 By \cite[Theorem 5.1]{OpperPlamondonSchroll}, the Nakayama functor~$\nu$ applied to~$\P_{(\gamma,f)}$ yields an object~$\P_{(\gamma',f')}$,
 where~$\gamma'$ is obtained from~$\gamma$ by moving both its endpoints along their respective boundary components.
 Thus~$\gamma'$ is a finite~$\gpoint$-arc, so $I=\nu P = \P_{(\gamma',f')}$ is in~$K^b(\proj A)$.
 This is equivalent to the statement that~$I$ has finite projective dimension.
\end{proof}

\section{Examples}\label{sect::examples}

Consider the following two gentle algebras given by the following quivers and where dotted arcs indicate zero relations:

\[\scalebox{0.9}{
\begin{tikzpicture}[>=stealth,scale=0.6]
\node (A1) at (0,0) {$\bullet$};
\node (A2) at (2,0) {$\bullet$};
\node (A3) at (4,0) {$\bullet$};

\draw[thick,->] (0.2,0.1)--(1.8,0.1);
\draw[thick,->] (0.2,-0.1)--(1.8,-0.1);
\draw[thick,->] (2.2,0.1)--(3.8,0.1);
\draw[thick,->] (2.2,-0.1)--(3.8,-0.1);

\draw[dotted, thick] (1.5,0.2) arc (180:0:0.5);
\draw[dotted, thick] (1.5,-0.2) arc (180:360:0.5);

\node at (2,-1.5) {$\Lambda_1$};

\begin{scope}[xshift=5cm]
\node (A1) at (0,0) {$1$};
\node (A2) at (2,2) {$2$};
\node (A3) at (4,0) {$3$};

\draw[thick, ->] (A1)--(A2);
\draw[thick, ->] (A2)--(A3);
\draw[thick,<-] (0.2,0.1)--(3.8,0.1);
\draw[thick,<-] (0.2,-0.1)--(3.8,-0.1);

\draw[thick, dotted] (0.8,0.2) arc (0:30:0.8);
\draw[thick, dotted] (1.5,1.5) arc (230:310:0.8);
\draw[thick, dotted] (3.2,-0.2) arc (210:480:0.6);

\node at (2,-1.5) {$\Lambda_2$};
\end{scope}

\end{tikzpicture}}\]

Both corresponds to dissections of a torus with one boundary components with two $\gpoint$ and two $\rpoint$ marked points. 

Here is the corresponding $\gpoint$ dissection $\Delta_1$ for $\Lambda_1$ (where opposite sides of the dotted square are identified), together with its dual $\rpoint$ dissection $\Delta_1^*$.

\[\scalebox{1}{
\begin{tikzpicture}[>=stealth,scale=1]

\draw[dotted] (0,0)--(2,0)--(2,2)--(0,2)--(0,0);

\draw (0,0) circle (0.3);
\draw (2,0) circle (0.3);
\draw (0,2) circle (0.3);
\draw (2,2) circle (0.3);

\draw[red,fill=red] (0,0.3) circle (0.05);
\draw[red,fill=red] (0,-0.3) circle (0.05);
\draw[red,fill=red] (2,0.3) circle (0.05);
\draw[red,fill=red] (2,-0.3) circle (0.05);
\draw[red,fill=red] (2,1.7) circle (0.05);
\draw[red,fill=red] (2,2.3) circle (0.05);
\draw[red,fill=red] (0,1.7) circle (0.05);
\draw[red,fill=red] (0,2.3) circle (0.05);

\draw[dark-green] (0.3,0)--(1.7,0);
\draw[dark-green] (0.3,2)--(1.7,2);
\draw[dark-green] (0.3,0)--(1.7,2);
\draw[dark-green] (0.3,0)--(1.7,0);
\draw[dark-green] (0.3,0).. controls (0.3,0.5) and (-0.3,1.5)..(-0.3,2);
\draw[dark-green] (2.3,0).. controls (2.3,0.5) and (1.7,1.5)..(1.7,2);

\draw[dark-green, fill=white] (0.3,0) circle (0.05);
\draw[dark-green, fill=white] (-0.3,0) circle (0.05);
\draw[dark-green, fill=white] (0.3,2) circle (0.05);
\draw[dark-green, fill=white] (-0.3,2) circle (0.05);
\draw[dark-green, fill=white] (2.3,0) circle (0.05);
\draw[dark-green, fill=white] (1.7,0) circle (0.05);
\draw[dark-green, fill=white] (2.3,2) circle (0.05);
\draw[dark-green, fill=white] (1.7,2) circle (0.05);

\begin{scope}[xshift=5cm]

\draw (0,0) circle (0.3);
\draw (2,0) circle (0.3);
\draw (0,2) circle (0.3);
\draw (2,2) circle (0.3);

\draw[red,fill=red] (0,0.3) circle (0.05);
\draw[red,fill=red] (0,-0.3) circle (0.05);
\draw[red,fill=red] (2,0.3) circle (0.05);
\draw[red,fill=red] (2,-0.3) circle (0.05);
\draw[red,fill=red] (2,1.7) circle (0.05);
\draw[red,fill=red] (2,2.3) circle (0.05);
\draw[red,fill=red] (0,1.7) circle (0.05);
\draw[red,fill=red] (0,2.3) circle (0.05);

\draw[red] (0,0.3)--(0,1.7);
\draw[red] (2,0.3)--(2,1.7);
\draw[red] (2,0.3)--(0,1.7);
\draw[red] (0,-0.3).. controls (0.5,-0.3) and (1.5,0.3)..(2,0.3);
\draw[red] (0,1.7).. controls (0.5,1.7) and (1.5,2.3)..(2,2.3);

\draw[dark-green, fill=white] (0.3,0) circle (0.05);
\draw[dark-green, fill=white] (-0.3,0) circle (0.05);
\draw[dark-green, fill=white] (0.3,2) circle (0.05);
\draw[dark-green, fill=white] (-0.3,2) circle (0.05);
\draw[dark-green, fill=white] (2.3,0) circle (0.05);
\draw[dark-green, fill=white] (1.7,0) circle (0.05);
\draw[dark-green, fill=white] (2.3,2) circle (0.05);
\draw[dark-green, fill=white] (1.7,2) circle (0.05);

\draw[blue, ->] (-0.5,0.8)--(2.5,0.8);
\draw[blue, ->] (0.8,-0.5)--(0.8,2.5);

\draw[blue,<-] (0,0.5) arc (90:-20:0.5);
\draw[blue,->] (2,0.5) arc (90:160:0.5);
\draw[blue,<-] (2,1.5) arc (270:160:0.5);
\draw[blue,->] (0,1.5) arc (270:340:0.5);

\node[blue] at (0.5,0.5) {$c$};
\node[blue] at (2.2,1) {$\alpha$};
\node[blue] at (1,2.2) {$\beta$};

\end{scope}

\end{tikzpicture}}\]

One easily computes the winding numbers of $w^{\Delta^*_1}(c)=-2$, $w^{\Delta^*_1}(\alpha)=0$ and $w^{\Delta^*_1}(\beta)=0$. Hence the greatest common divisor of Theorem \ref{thm::LP+main} is $0$.

For $\Lambda_2$ one gets the following picture for $\Delta_2$ and $\Delta_2^*$.

\[\scalebox{1}{
\begin{tikzpicture}[>=stealth,scale=1]

\draw (0,0) circle (0.3);
\draw (2,0) circle (0.3);
\draw (0,2) circle (0.3);
\draw (2,2) circle (0.3);

\draw[red,fill=red] (0,0.3) circle (0.05);
\draw[red,fill=red] (0,-0.3) circle (0.05);
\draw[red,fill=red] (2,0.3) circle (0.05);
\draw[red,fill=red] (2,-0.3) circle (0.05);
\draw[red,fill=red] (2,1.7) circle (0.05);
\draw[red,fill=red] (2,2.3) circle (0.05);
\draw[red,fill=red] (0,1.7) circle (0.05);
\draw[red,fill=red] (0,2.3) circle (0.05);

\draw[red] (0,0.3).. controls (0,0.3) and (1,2.3)..(2,2.3);
\draw[red] (2,2.3).. controls (1.2,2.3) and (2,0.5)..(2,0.3);
\draw[red] (0,2.3).. controls (-0.8,2.3) and (0,0.5)..(0,0.3);
\draw[red] (0,-0.3).. controls (0.5,-0.3) and (1.5,0.3)..(2,0.3);
\draw[red] (0,1.7).. controls (0.5,1.7) and (1.5,2.3)..(2,2.3);

\draw[dark-green] (0.3,0)--(1.7,2);
\draw[dark-green] (-0.3,2).. controls (-0.3,1) and (0.3,1).. (0.3,2);
\draw[dark-green] (0.3,0).. controls (0.3,1) and (-0.3,1)..(-0.3,2);
\draw[dark-green] (2.3,0).. controls (2.3,1) and (1.7,1)..(1.7,2);

\draw[dark-green, fill=white] (0.3,0) circle (0.05);
\draw[dark-green, fill=white] (-0.3,0) circle (0.05);
\draw[dark-green, fill=white] (0.3,2) circle (0.05);
\draw[dark-green, fill=white] (-0.3,2) circle (0.05);
\draw[dark-green, fill=white] (2.3,0) circle (0.05);
\draw[dark-green, fill=white] (1.7,0) circle (0.05);
\draw[dark-green, fill=white] (2.3,2) circle (0.05);
\draw[dark-green, fill=white] (1.7,2) circle (0.05);

\end{tikzpicture}}\]
 One easily computes that $w^{\Delta^*_2}(c)=-2=w^{\Delta_1^*}$, hence the algebras have the same AG-invariant. However, we have $w^{\Delta^*_2}(\alpha)=0$ and $w^{\Delta^*_2}(\beta)=2$ hence the greatest common divisor is $2\neq 0$ and the algebras $\Lambda_1$ and $\Lambda_2$ are not derived equivalent.
Note that this was already shown in \cite{Kalck}.

 Consider now the following two algebras:
 \[\scalebox{0.8}{
\begin{tikzpicture}[>=stealth,scale=1.2]

\node (A3) at (0,2) {$\bullet$};
\node (A5) at (1,1) {$\bullet$};
\node (A1) at (2,0) {$\bullet$};
\node (A2) at (3,1) {$\bullet$};
\node (A4) at (2,2) {$\bullet$};
\node (A6) at (4,2) {$\bullet$};

\draw[thick, ->] (A5)--(A1);
\draw[thick, ->] (A2)--(A5);
\draw[thick, ->] (A6)--(A2);
\draw[thick, ->] (A6)--(A4);
\draw[thick, ->] (A4)--(A3);
\draw[thick, ->] (A5)--(A4);
\draw[thick, ->] (A3)--(A5);
\draw[thick, ->] (3,0.6)--(2.4,0);
\draw[thick, ->] (2.2,0.2)--(2.8,0.8);

\draw[thick, dotted] (1.3,1.3) arc (45:135:0.4);
\draw[thick, dotted] (0.5,2) arc (0:-45:0.5);
\draw[thick, dotted] (1.5,2) arc (180:225:0.5);
\draw[thick, dotted] (1.5,1) arc (0:-45:0.5);
\draw[thick, dotted] (2.2,0.3) arc (55:125:0.45);
\draw[dotted, thick] (2.8,0.5)--(2.65,0.65);
\draw[dotted, thick] (2.5,1) arc (180:45:0.5);

\node at (2,-1) {$\Lambda_1$};

\begin{scope}[xshift=5cm]

\node (A6) at (0,0) {$\bullet$};
\node (A1) at (0,3) {$\bullet$};
\node (A4) at (1,1) {$\bullet$};
\node (A3) at (2,0) {$\bullet$};
\node (A2) at (2,3) {$\bullet$};
\node (A5) at (1,2) {$\bullet$};

\draw[thick, ->] (A1)--(A6);
\draw[thick, ->] (A2)--(A1);
\draw[thick, ->] (A5)--(A2);
\draw[thick, ->] (A5)--(A4);
\draw[thick, ->] (A2)--(A3);
\draw[thick, ->] (A4)--(A6);
\draw[thick, ->] (A3)--(A4);
\draw[thick, ->] (A6)--(A3);
\draw[thick, ->] (A1)--(A5);

\draw[thick, dotted] (0.5,0) arc (0:45:0.5);
\draw[thick, dotted] (1.5,0) arc (180:135:0.5);
\draw[thick, dotted] (1,1.5) arc (90:225:0.5);
\draw[thick, dotted] (1.3,2.3) arc (45:135:0.4);
\draw[thick, dotted] (0.5,3) arc (0:-45:0.5);
\draw[thick, dotted] (1.5,3) arc (180:225:0.5);

\node at (1,-1) {$\Lambda_2$};
\end{scope}

\end{tikzpicture}}\]

One can check that these two algebras both come from a dissection of a torus with two boundary components, 3 $\gpoint$ marked points (hence 3 $\rpoint$ marked points), and one $\rpoint$ puncture. 

The corresponding dual dissections $\Delta_1^*$ and $\Delta_2^*$ are as follows:

\[\scalebox{0.8}{
\begin{tikzpicture}[>=stealth,scale=1.2]

\draw[dotted] (0,0)--(2,0)--(2,2)--(0,2)--(0,0);

\draw (0,0) circle (0.3);
\draw (2,0) circle (0.3);
\draw (0,2) circle (0.3);
\draw (2,2) circle (0.3);

\draw[red,fill=red] (0,0.3) circle (0.05);
\draw[red,fill=red] (0,-0.3) circle (0.05);
\draw[red,fill=red] (2,0.3) circle (0.05);
\draw[red,fill=red] (2,-0.3) circle (0.05);
\draw[red,fill=red] (2,1.7) circle (0.05);
\draw[red,fill=red] (2,2.3) circle (0.05);
\draw[red,fill=red] (0,1.7) circle (0.05);
\draw[red,fill=red] (0,2.3) circle (0.05);

\draw[red, fill=red] (0.5,1) circle (0.05);
\draw (1,0.5) circle (0.2);
\draw[red, fill=red] (1,0.7) circle (0.05);

\draw[red] (0,-0.3)--(2,-0.3);
\draw[red] (0,1.7)--(2,1.7);
\draw[red] (0,-0.3).. controls (-1,-0.3) and (0,1.7)..(0,1.7);
\draw[red] (2,-0.3).. controls (1,-0.3) and (2,1.7)..(2,1.7);
\draw[red] (1,0.7)..controls (0.5,0.7) and (1,-0.3)..(0,-0.3);

\draw[red] (0,0.3)--(0.5,1);
\draw[red] (0,1.7)--(0.5,1);
\draw[red] (1,0.7)--(0.5,1);

\draw[dark-green, fill=white] (0.3,0) circle (0.05);
\draw[dark-green, fill=white] (-0.3,0) circle (0.05);
\draw[dark-green, fill=white] (0.3,2) circle (0.05);
\draw[dark-green, fill=white] (-0.3,2) circle (0.05);
\draw[dark-green, fill=white] (2.3,0) circle (0.05);
\draw[dark-green, fill=white] (1.7,0) circle (0.05);
\draw[dark-green, fill=white] (2.3,2) circle (0.05);
\draw[dark-green, fill=white] (1.7,2) circle (0.05);
\draw[dark-green, fill=white] (1,0.3) circle (0.05);

\draw[blue,->] (-0.5,1.25)--(2.5,1.25);
\draw[blue, ->] (1.5,-0.5)--(1.5,2.5);

\node[blue] at (2.2,1.4) {$\alpha$};
\node[blue] at (1.3,2.2) {$\beta$};

\node at (1,-1) {$\Delta_1^*$};

\begin{scope}[xshift=5cm]

\draw[dotted] (0,0)--(2,0)--(2,2)--(0,2)--(0,0);

\draw (0,0) circle (0.3);
\draw (2,0) circle (0.3);
\draw (0,2) circle (0.3);
\draw (2,2) circle (0.3);

\draw[red,fill=red] (0,0.3) circle (0.05);
\draw[red,fill=red] (0,-0.3) circle (0.05);
\draw[red,fill=red] (2,0.3) circle (0.05);
\draw[red,fill=red] (2,-0.3) circle (0.05);
\draw[red,fill=red] (2,1.7) circle (0.05);
\draw[red,fill=red] (2,2.3) circle (0.05);
\draw[red,fill=red] (0,1.7) circle (0.05);
\draw[red,fill=red] (0,2.3) circle (0.05);

\draw[red, fill=red] (0,1) circle (0.05);
\draw[red, fill=red] (2,1) circle (0.05);

\draw (1,0.8) circle (0.2);
\draw[red, fill=red] (1,1) circle (0.05);

\draw[red] (0,-0.3)--(2,-0.3);
\draw[red] (0,1.7)--(2,1.7);

\draw[red] (0,0.3)--(0,1);
\draw[red] (1,1)--(0,1);

\draw[red] (0,0.3)..controls (1,0.3) and (2,1)..(2,1.7);
\draw[red] (0,-0.3)..controls (1,-0.3) and (1.5,1)..(2,1);
\draw[red] (1,1)--(2,1.7);
\draw[red] (2,1)--(2,0.3);

\draw[dark-green, fill=white] (0.3,0) circle (0.05);
\draw[dark-green, fill=white] (-0.3,0) circle (0.05);
\draw[dark-green, fill=white] (0.3,2) circle (0.05);
\draw[dark-green, fill=white] (-0.3,2) circle (0.05);
\draw[dark-green, fill=white] (2.3,0) circle (0.05);
\draw[dark-green, fill=white] (1.7,0) circle (0.05);
\draw[dark-green, fill=white] (2.3,2) circle (0.05);
\draw[dark-green, fill=white] (1.7,2) circle (0.05);
\draw[dark-green, fill=white] (1,0.6) circle (0.05);

\draw[blue,->] (-0.5,1.25)--(2.5,1.25);
\draw[blue, ->] (1.5,-0.5)--(1.5,2.5);

\node[blue] at (2.2,1.4) {$\alpha$};
\node[blue] at (1.3,2.2) {$\beta$};

\node at (1,-1) {$\Delta_1^*$};

\end{scope}

\end{tikzpicture}}\]

One computes that for both algebras the winding numbers of the curves $c_1$, $c_2$ and $c_3$ are respectively $-3$, $0$ and $-3$. Therefore the algebras $\Lambda_1$ and $\Lambda_2$ have the same AG-invariant. Moreover, since $-3+2=1$, both gcd are $1$  and so the algebras are derived equivalent.

\section{Complete derived invariant for surface cut algebras}\label{sect::surface-cut-algebras}

A theorem similar to Theorem \ref{theo::derivedInvariants} has been shown in \cite{AmiotGrimeland} for  \emph{surface cut algebras}. These algebras are gentle  and appear in the theory of cluster categories \cite{DavidRoeslerSchiffler, AmiotGrimeland}. In this context, they naturally arise with a slightly different model from the one in \cite{OpperPlamondonSchroll}.  The purpose of this Section is to give a reformulation  of Theorem \ref{thm::LP+main} in terms of this geometric model. 

Let $\Sigma$ be an oriented connected compact surface of genus $g$, with $b$ boundary components and $\cM$ be a finite set of marked points on the boundary of $\Sigma$, such that there is at least one marked point on each component of the boundary.  By an \emph{arc}, we mean the isotopy class of a simple curve with endpoints being in $\cM$ that does not cut out a monogon or a digon of the surface. Two arcs are called \emph{non intersecting} if there exist representatives in their respective homotopy class that do not intersect. A \emph{triangulation} of $(\Sigma,\cM)$ is a maximal collection of non intersecting arcs.

To each triangulation $\Delta$ it is possible to associate a quiver $Q^\Delta$ as follows (see \cite{FominShapiroThurston}): the set of vertices $Q^\Delta_0$ of $Q^\Delta$ is the set of internal arcs of $\Delta$, and the set of arrows $Q_1^\Delta$ is in bijection with the set of oriented internal angles between two arcs of $\Delta$. We also denote by $Q^\Delta_2$ the set of oriented internal triangles of $\Delta$, that are the triangles whose three sides are internal arcs of the triangulation. The Jacobian algebra associated to $\Delta$ has been introduced in \cite{AssemBrustleCharbonneauJodoinPlamondon}. It is defined as the quotient $$ {\rm Jac} (\Delta):= kQ^\Delta/ \langle \alpha\beta,\ \beta\gamma,\ \gamma\alpha,\  \forall \alpha\beta\gamma\in Q_2\rangle.$$

\begin{definition}\cite[Def. 2.8]{AmiotGrimeland}\label{def admissible cut}
An \emph{admissible cut} of $\Delta$ is map $d:Q_1\to \{0,1\}$ such that 
\begin{itemize}
\item for each internal triangle $\alpha\beta\gamma$ in $Q_2$, we have $d(\alpha)+d(\beta)+d(\gamma)=1$ 
\item for each angle $\alpha$ in a triangle which is not internal, we have $d(\alpha)=0$ . 
\end{itemize}

An admissible cut defines a positive $\mathbb Z$-grading on the Jacobian algebra ${\rm Jac}(\Delta)$. The \emph{surface cut algebra} (first introduced in \cite{DavidRoeslerSchiffler}) $\Lambda:=(\Delta,d)$  associated to $\Delta$ and $d$, is the degree zero subalgebra of ${\rm Jac}(\Delta,d)$. In other words $\Lambda:=\Lambda(\Delta,d)$ is the path algebra of degree zero arrows of $Q^\Delta$ divided by the ideal generated by the relation $\alpha\beta=0$ as soon as the arrows $\alpha$ and $\beta$ correspond to angles of the same triangle.  
 
\end{definition}

\begin{remark}
One can show that the surface cut algebras are exactly the gentle algebras of global dimension $\leq 2$ such that the functor $-\otimes_{\Lambda} \Ext{2}{\Lambda}(D\Lambda,\Lambda)$ is nilpotent (that is $\tau_2$-finite). For example, the algebra $\Lambda_1$ given in the first example of Section \ref{sect::examples} has global dimension $2$, but the associated Jacobian algebra is an infinite dimensional algebra. It comes from a triangulation of a surface with empty boundary, so it is not a surface cut algebra in the sense of \cite{AmiotGrimeland}. 
\end{remark}
An admissible cut $d$ of a triangulation $\Delta$ defines a map $d:\pi_1^{\rm free}(\Sigma)\to \mathbb Z$ that counts with signs the number of times the loop $\gamma$ intersects an angle of degree $1$ of $\Delta$ (see \cite[section 2]{Amiot} for details). 

In fact this map $d$ has a geometric interpretation.

\begin{lemma}\label{lemma d eta}
Let $(\Sigma,\cM)$ be a smooth orientable surface with marked points, and let $(\Delta,d)$ be a triangulation of $(\Sigma,\cM)$ together with an admissible cut.  Then there exists a unique element in $\LF(\Sigma)$ such that for any element $\gamma\in \pi_1^{\rm free}(\Sigma)$, we have the equality $w_{\eta}(\gamma)=d(\gamma)$.

\end{lemma}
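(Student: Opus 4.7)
Plan. The argument splits naturally into two parts. Uniqueness of~$\eta$ in~$\LF(\Sigma)$ is immediate from Proposition~\ref{propLF}(2): if two line fields have the same winding-number function on~$\pi_1^{\rm free}(\Sigma)$, then they are homotopic, so at most one element of~$\LF(\Sigma)$ can realize~$d$.

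For existence, my plan is to transport the line field already constructed in Section~\ref{sect::section1} via the correspondence between surface cut algebras and gentle algebras. Since the surface cut algebra~$\Lambda(\Delta,d)$ is gentle, Theorem~\ref{theo::bijectionDissectionsGentleAlgebras} associates to it a marked surface~$(S,M,P)$ with admissible~$\gpoint$-dissection~$\Delta'$, to which Lemma~\ref{lemma Delta eta} attaches the line field~$\eta(\Delta'^*)$. The first step is to exhibit a natural orientation-preserving homeomorphism~$\Phi\colon \Sigma\to S$ sending the arcs of the triangulation~$\Delta$ to the~$\gpoint$-arcs of~$\Delta'$ and the triangles of~$\Delta$ to the polygons cut out by~$\Delta'$, in such a way that the unique degree-$1$ corner of each internal triangle~$T$ is sent to the corner containing the single~$\gpoint$ of the corresponding polygon. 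The admissibility condition~$d(\alpha)+d(\beta)+d(\gamma)=1$ on each internal triangle is exactly what matches the OPS requirement that each polygon contain exactly one~$\gpoint$, while the condition that angles in non-internal triangles have degree~$0$ matches the placement of the~$\gpoint$'s of~$M$ on the boundary of~$\Sigma$.

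With~$\Phi$ in hand, I set~$\eta:=\Phi^*\eta(\Delta'^*)$, which by Lemma~\ref{lemma w} has~$w_\eta(\gamma)=w^{\Delta'^*}(\Phi\gamma)$ for every free homotopy class~$\gamma$. It then suffices to verify that $w^{\Delta'^*}(\Phi\gamma)=d(\gamma)$. I would put~$\gamma$ in minimal transverse position with~$\Delta$, so that~$\Phi\gamma$ is in minimal transverse position with~$\Delta'^*$ after a small perturbation, and decompose it into segments~$\gamma_i$ crossing individual triangles. Lemma~\ref{lemm::computingWindingNumber} asserts that each~$\gamma_i$ contributes~$\pm 1$ to~$w^{\Delta'^*}(\Phi\gamma)$ according to whether the~$\gpoint$ of that polygon lies on the left or on the right of~$\Phi\gamma_i$; by the matching from the previous paragraph, a segment picks up a nonzero contribution precisely when it sweeps across the degree-$1$ corner of its triangle, with the same sign as in the combinatorial definition of~$d$ recalled in \cite{Amiot}. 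Summing over segments gives the desired equality~$w_\eta(\gamma)=d(\gamma)$.

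The main obstacle I anticipate is setting up the homeomorphism~$\Phi$ cleanly: one must carefully identify the~$\gpoint$/$\rpoint$ marked points and the (possibly punctured) polygons on the OPS side with the corners and triangles on the triangulated side, and check the sign conventions so that the local contributions in Lemma~\ref{lemm::computingWindingNumber} match those in the degree count of~\cite{Amiot}. Once this dictionary is established, both existence and the equality~$w_\eta=d$ follow formally.
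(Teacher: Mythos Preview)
Your uniqueness argument via Proposition~\ref{propLF}(2) is fine and matches the paper. For existence, however, the paper takes a much more direct route than you do: it constructs the line field on~$\Sigma$ itself, triangle by triangle, prescribing an explicit foliation on each triangle (with three different local pictures according to whether the triangle has zero, one, or two boundary sides) so that~$\eta$ is tangent to every internal arc. One then checks, for each triangle, that the winding number across an angle equals its degree, and concludes exactly as in Lemma~\ref{lemm::computingWindingNumber}. No other surface or model enters.

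Your route through the OPS surface~$(S,M,P,\Delta')$ is genuinely different, and as written it has gaps. First, the two models are not the same --- the paper says as much at the start of Section~\ref{sect::surface-cut-algebras} --- and you never actually produce the homeomorphism~$\Phi\colon\Sigma\to S$; you acknowledge this is the main obstacle but leave it open. Second, your dictionary is internally inconsistent: you send triangles of~$\Delta$ to polygons cut out by~$\Delta'$, but by Proposition~\ref{prop::cellularDecomposition} those polygons each contain a single~$\rpoint$, not a single~$\gpoint$; the polygons with a single~$\gpoint$ are the ones cut out by the dual~$\Delta'^*$. So the phrase ``the corner containing the single~$\gpoint$ of the corresponding polygon'' does not match the map you set up. Third, your assertion that ``a segment picks up a nonzero contribution precisely when it sweeps across the degree-$1$ corner'' is in tension with Lemma~\ref{lemm::computingWindingNumber}, where \emph{every} segment in a polygon of~$\Delta'^*$ contributes~$\pm 1$; by contrast, in the definition of~$d(\gamma)$ from~\cite{Amiot} only segments crossing degree-$1$ angles count. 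Reconciling these two counts would require an additional cancellation argument that you have not supplied.

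In short, the indirect approach might be made to work with substantially more effort on the dictionary between the two models, but the paper's local construction on~$\Sigma$ bypasses all of this and is both shorter and self-contained.
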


\begin{proof}

Choose a smooth representatives of each arc of $\Delta$. We define $\eta$ on each triangle of $\Delta$ so that it is the tangent field along each internal arc of $\Delta$. The corresponding foliation is drawn in the following picture depending on wether there is one, two or three sides of the triangle that are internal arcs. This will define a line field on $\Sigma\backslash \cM$.

\[\scalebox{0.9}{
\begin{tikzpicture}[>=stealth,scale=0.6]

\draw[gray,fill=gray] (-2.4,0)--(-2,0)--(0,4)--(2,0)--(2.4,0)--(0,4.6)--(-2.4,0);

\draw (-2,0)--(2,0)--(0,4)--(-2,0);
\draw[blue!10, fill=blue!10] (-2,-0.6)--(-2,0)--(0,4)--(2,0)--(2,-0.6)--(-2,-0.6);
\draw[thick] (-2,0)--(2,0);
\node at (-2,0) {$\bullet$};
\node at (0,4) {$\bullet$};
\node at (2,0) {$\bullet$};

\draw[blue] (-1.75,0.5)--(1.75,0.5);
\draw[blue] (-1.5,1)--(1.5,1);
\draw[blue] (-1.25,1.5)--(1.25,1.5);
\draw[blue] (-1,2)--(1,2);
\draw[blue] (-0.75,2.5)--(0.75,2.5);
\draw[blue] (-0.5,3)--(0.5,3);
\draw[blue] (-0.25,3.5)--(0.25,3.5);

\node at (0,-2) { two boundary sides};

\begin{scope}[xshift=8cm]

\draw[blue!10,fill=blue!10] (-2.4,0)--(2.4,0)--(0.4,4)--(-0.4,4)--(-2.4,0);
\draw[gray,fill=gray] (-0.4,4)--(0,4.6)--(0.4,4)--(-0.4,4);

\draw (-2,0)--(2,0)--node (B1){} (0,4)--node (B2){}(-2,0);
\draw[gray, fill=gray] (-2.4,-0.6)--(-2.4,0)--(2.4,0)--(2.4,-0.6)--(-2.4,-0.6);
\draw[thick] (-2,0)--(0,4)--(2,0);
\node at (-2,0) {$\bullet$};
\node at (0,4) {$\bullet$};
\node at (2,0) {$\bullet$};

\draw[blue] (0,4)--(-1.75,0);
\draw[blue] (0,4)--(-1.25,0);
\draw[blue] (0,4)--(-0.75,0);
\draw[blue] (0,4)--(-0.25,0);
\draw[blue] (0,4)--(0.25,0);
\draw[blue] (0,4)--(0.75,0);
\draw[blue] (0,4)--(1.25,0);
\draw[blue] (0,4)--(1.75,0);

\draw[->,thick](B1)--(B2);

\node at (0,-2) {one boundary side};

\end{scope}

\begin{scope}[xshift=16cm]

\draw[blue!10,fill=blue!10] (-2.75,-0.5)--(0,5)--(2.75,-0.5)--(-2.75,-0.5);

\draw[gray, fill=gray] (-2.75,-0.5)--(-1.75,-0.5)--(-2.25,0.5)--(-2.75,-0.5);
\draw[gray, fill=gray] (2.75,-0.5)--(1.75,-0.5)--(2.25,0.5)--(2.75,-0.5);
\draw[gray, fill=gray] (-0.5,4)--(0.5,4)--(0,5)--(-0.5,4);

\draw (-2,0)--node (A1){}(2,0)--node (A2){}(0,4)--node (A3){}(-2,0);

\draw[thick] (-2,0)--(0,4)--(2,0);
\node at (-2,0) {$\bullet$};
\node at (0,4) {$\bullet$};
\node at (2,0) {$\bullet$};

\draw[blue] (-2,0)..controls (-1,0.5) and (1,0.5)..(2,0);
\draw[blue] (-2,0)..controls (-1,1) and (1,1)..(2,0);
\draw[blue] (-2,0)..controls (-1,1.5) and (1,1.5)..(2,0);
\draw[blue] (-2,0)..controls (-1,2) and (1,2)..(2,0);
\draw[blue] (-2,0)..controls (-0.5,3) and (0.5,3)..(2,0);
\draw[blue] (-2,0)..controls (0,4) and (0,4)..(2,0);

\node at (0,-2) { no boundary side and $d(\alpha)=1$};

\draw[->,thick] (A2)--node [fill=blue!10, inner sep=0pt]{$\alpha$}(A3);
\draw[->,thick] (A3)--(A1);
\draw[->,thick] (A1)--(A2);

\end{scope}

\end{tikzpicture}}\]
It is then clear that the degree of any arrow of $Q^\Delta$ is equal to the winding number of the corresponding curve in the triangle. The proof is then similar to the one of Lemma \ref{lemm::computingWindingNumber}. 

\end{proof}

The following result  is the main theorem in \cite{AmiotGrimeland}.

\begin{theorem}\label{thmAG}\cite[Thm 3.13]{AmiotGrimeland}
Let $\Lambda=(\Sigma,\cM,\Delta,d)$ and $\Lambda'=(\Sigma',\cM',\Delta',d')$ be surface cut algebras.The the following are equivalent
\begin{enumerate}
\item $\Lambda$ and $\Lambda'$ are derived equivalent;
\item There exists an orientation-preserving homeomorphism $\Phi:\Sigma\to\Sigma'$ with $\Phi(\cM)=\cM'$ such that for any loop $\gamma\in \pi_1^{\rm free}(\Sigma)$ we have $d(\gamma)=d'(\Phi\circ\gamma)$.
\end{enumerate}
\end{theorem}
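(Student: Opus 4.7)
The plan is to reduce Theorem~\ref{thmAG} to Theorem~\ref{theo::derivedInvariants} via the line-field interpretation of the admissible cut provided by Lemma~\ref{lemma d eta}. By that lemma, the map $d:\pi_1^{\rm free}(\Sigma)\to\bZ$ is realized as the winding number of a (unique up to homotopy) line field $\eta_d\in\LF(\Sigma)$, and similarly $d'=w_{\eta_{d'}}$ on $\Sigma'$. Combined with Lemma~\ref{lemma w} and Proposition~\ref{propLF}(2), condition~(2) is then equivalent to the existence of an orientation-preserving homeomorphism $\Phi:(\Sigma,\cM)\to(\Sigma',\cM')$ such that $\Phi^*(\eta_{d'})$ is homotopic to $\eta_d$.

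The key intermediate step is to bridge the Amiot--Grimeland model with the Opper--Plamondon--Schroll model used in Theorem~\ref{theo::derivedInvariants}. Namely, for the surface cut algebra $\Lambda=\Lambda(\Delta,d)$ viewed as a gentle algebra, let $(S,M,P,\Delta_{OPS})$ denote its OPS data as in Theorem~\ref{theo::bijectionDissectionsGentleAlgebras}. I would construct an orientation-preserving homeomorphism $\Psi:\Sigma\to S$ restricting to a bijection between $\cM$ and the appropriate $\rpoint$-marked points, and such that $\Psi^*(\eta(\Delta_{OPS}^*))$ is homotopic to $\eta_d$. The construction proceeds triangle-by-triangle: each internal triangle of $\Delta$ (together with its distinguished degree-$1$ angle) corresponds, under the dictionary between the two models, to a $\gpoint$-vertex of $S$ at which the three corresponding $\gpoint$-arcs meet and form a zero-relation in $\Lambda$. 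A direct local comparison between the foliations described in the proofs of Lemma~\ref{lemma d eta} and Lemma~\ref{lemma Delta eta} shows that the two line fields agree on each triangle up to homotopy, and gluing along shared internal arcs yields the global homotopy.

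Once this bridge is in place, $(1)\Leftrightarrow(2)$ follows: condition~(2), reformulated as the homotopy $\Phi^*(\eta_{d'})\simeq\eta_d$, transports via $\Psi$ and $\Psi'$ into a homeomorphism $(S,M,P)\to(S',M',P')$ preserving winding numbers of all simple closed curves with respect to $\eta(\Delta_{OPS}^*)$ and $\eta(\Delta'^*_{OPS})$, which is precisely the criterion of Theorem~\ref{theo::derivedInvariants}; conversely, any such OPS-homeomorphism pulls back to a homeomorphism of AG data having the required property. The main obstacle is the construction of $\Psi$: although the two geometric models describe the same gentle algebra, they are built from quite different cellular data (a triangulation with admissible cut versus a dissection with its dual), so establishing a precise triangle-by-polygon dictionary and checking that the foliations of Lemma~\ref{lemma d eta} and Lemma~\ref{lemma Delta eta} match on each cell---including compatibility with orientations and with the degree-$0$/degree-$1$ distinction at each angle---is the technically delicate part of the argument.
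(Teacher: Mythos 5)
The first thing to note is that the paper does not prove Theorem~\ref{thmAG} at all: it is quoted from \cite{AmiotGrimeland}, and the text immediately following the statement stresses that the proof given there ``uses completely different tools'', namely graded cluster mutations and cluster categorification. The paper's own logic runs in the opposite direction from yours: it takes Theorem~\ref{thmAG} as an input and combines it with Lemma~\ref{lemma d eta} and Theorem~\ref{thmLP} to deduce the numerical criterion for surface cut algebras; it never derives Theorem~\ref{thmAG} from Theorem~\ref{theo::derivedInvariants}. So your strategy is genuinely different from both the paper's treatment (a citation) and the original argument. The first part of your reduction is fine: by Lemma~\ref{lemma d eta}, Lemma~\ref{lemma w} and Proposition~\ref{propLF}(2), condition~(2) of Theorem~\ref{thmAG} is indeed equivalent to $\Phi^*(\eta_{d'})$ being homotopic to $\eta_d$.

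The genuine gap is the ``bridge'' $\Psi:\Sigma\to S$, which you correctly identify as the delicate point but then treat as a local, triangle-by-triangle verification. It is not. The two models live on \emph{a priori different} surfaces: $(\Sigma,\cM)$ carries a triangulation whose internal arcs are the vertices of the quiver, while $(S,M,P)$ is reconstructed from the ribbon structure of that quiver via Theorem~\ref{theo::bijectionDissectionsGentleAlgebras}. Before any local comparison of foliations can be glued, you must prove that $\Sigma$ and $S\setminus P$ are homeomorphic open surfaces (equal genus and equal number of ends, i.e.\ $b_\Sigma=b_S+|P|$) and that the homeomorphism can be chosen compatibly with the cell structures; this is a global combinatorial statement about how the triangulation of $\Sigma$ determines the ribbon graph of $Q^\Delta$, and nothing in your sketch addresses it. Your proposed dictionary also cannot be literal: $\Psi$ cannot ``restrict to a bijection between $\cM$ and the $\rpoint$-marked points'', since already for a triangulated pentagon with no internal triangle one has $|\cM|=5$ while the OPS disc of $kA_2$ has three $\rpoint$-points; and the internal-triangle-to-$\gpoint$-vertex correspondence says nothing about triangles with boundary sides, which is where the two boundary structures (and hence the behaviour of $\eta_d$ versus $\eta(\Delta^*)$ near the ends, which governs the winding numbers of the curves in $\cB$) must be matched. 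Finally, even granting the bridge, you still need to reconcile the two homeomorphism conditions ($\Phi(\cM)=\cM'$ versus $\Phi$ being a homeomorphism of marked surfaces $(S,M,P)\to(S',M',P')$), which again requires control of the marked-point dictionary. I believe the statement you need is true, but it is a substantial comparison-of-models theorem in its own right, not a routine gluing; as written, the proposal assumes precisely the content that would make it work.
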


Combining this theorem  with Lemma \ref{lemma d eta}, we obtain a result analogous to Theorem \ref{theo::derivedInvariants}. However, it is interesting to note that the proof of Theorem \ref{thmAG} uses completely different tools. The main ingredient in the proof of the above result are (graded) cluster mutations and cluster categorification. 

Using Theorem \ref{thmLP}, it is possible to translate this result into a numerical criterion to check wether two surface cut algebras are derived equivalent.

Denote by $\cG=\{\alpha_1,\beta_1,\ldots,\alpha_g,\beta_g\}$ and $\cB=\{c_1,\ldots,c_b\}$ a basis of the fundamental group as in section \ref{sect::MCG}. For each $j=1,\ldots, b$, we denote by $n(j)=\sharp (\cM\cap \partial_j\Sigma)$ the number of marked points on $\partial_j\Sigma$.

\begin{theorem}
Let $\Lambda=(\Sigma,\cM,\Delta,d)$ and $\Lambda'=(\Sigma',\cM',\Delta',d')$ be surface cut algebras. Let $\cB$ and $\cG$ (resp. $\cB'$ and $\cG'$) be sets of simple closed curves on $\Sigma$ (resp. $\Sigma'$).
Then $\Lambda$ and $\Lambda'$ are derived equivalent if and only if 
the following numbers coincide

\begin{enumerate}
\item $g=g'$, $b=b'$, $\sharp \cM=\sharp \cM'$;
\item there exists a permutation $\sigma\in \mathfrak{S}_b$ such that for any $i=1,\ldots,b$ we have  $n(\sigma(i))=n'(i)$ and $d(c_{\sigma(i)})=d'(c'_{i})$;
\item and if the genus~$g(\Sigma)$ is $\geq 1$, one of the following holds:
\begin{enumerate} 

\item for $g(\Sigma)=1$, we have 

$$\gcd\{d(\gamma),d(c)+2,\gamma\in \cG, c\in \cB\}=\gcd\{d'(\gamma'),d'(c')+2,\gamma'\in \cG', c'\in \cB'\}$$
\item for $g(\Sigma)\geq 2$,  one the following occurs

\begin{enumerate}
\item there exist $\gamma$  in $\cG\cup \cB$ and $\gamma'$ in $\cG'\cup \cB'$ such that $d(\gamma)$ and $d'(\gamma')$ are odd, or 
\item for any $\gamma$ in $\cG\cup \cB$, for any $\gamma'$ in $\cG'\cup \cB$, the numbers $d(\gamma)$ and $d'(\Phi(\gamma))$ are even and there exists an $i$ with $d(c_i)=0\  \mod 4$, or
\item for any $\gamma$ in $\cG\cup \cB$ and $\gamma\in \cG\cup \cB$, the numbers $d(\gamma)$ and $d'(\gamma')$ are even, for any $i=1,\ldots ,b$ we have $d(c_i)=2\ \mod 4$ and 
$$\sum_{i=1}^g\frac{1}{2}(d(\alpha_i)+1)(d(\beta_i)+1)=\sum_{i=1}^g\frac{1}{2}(d'(\alpha'_i)+1)(d'(\beta'_i)+1)\ \mod 2.$$ 
\end{enumerate}

\end{enumerate}
\end{enumerate}
\end{theorem}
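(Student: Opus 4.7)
The plan is to combine Theorem~\ref{thmAG} of Amiot--Grimeland with Lemma~\ref{lemma d eta} and the classification of line fields up to mapping class group action in Theorem~\ref{thmLP}, in exact parallel with the proof of Theorem~\ref{thm::LP+main}. First I would apply Lemma~\ref{lemma d eta} to produce unique homotopy classes of line fields $\eta_d \in \LF(\Sigma)$ and $\eta_{d'} \in \LF(\Sigma')$ satisfying $w_{\eta_d} = d$ and $w_{\eta_{d'}} = d'$ on the free loop spaces. Theorem~\ref{thmAG} then reads: $\Lambda$ and $\Lambda'$ are derived equivalent if and only if there is an orientation-preserving homeomorphism $\Phi \colon \Sigma \to \Sigma'$ with $\Phi(\cM) = \cM'$ satisfying $d = d' \circ \Phi_*$ on $\pi_1^{\rm free}(\Sigma)$. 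Replacing $\Phi$ by an isotopic diffeomorphism exactly as in the proof of Theorem~\ref{thm::LP+main}, and combining Lemma~\ref{lemma w} with Proposition~\ref{propLF}(2), this condition becomes the statement that $\eta_d$ and $\Phi^*\eta_{d'}$ are homotopic line fields; equivalently, that $\eta_d$ and $\eta_{d'}$ lie in the same $\MCG$-orbit once the marked surfaces are identified via~$\Phi$.

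Next, the existence of such a~$\Phi$ at the level of marked surfaces is classically equivalent to condition~(1) together with the existence of the permutation~$\sigma \in \mathfrak{S}_b$ matching the marked point counts $n(\sigma(j)) = n'(j)$ from~(2). After fixing such a~$\Phi$ and relabelling boundary components so that $\Phi_*$ carries each reference curve $c_i$ to $c'_i$, the remaining question is whether the two line fields on $\Sigma$ are~$\MCG$-equivalent; note that $P = \emptyset$ here since surface cut algebras correspond to ideal triangulations of unpunctured surfaces. I would then apply Theorem~\ref{thmLP} directly: using $w_{\Phi^*\eta_{d'}}(\gamma) = w_{\eta_{d'}}(\Phi_*\gamma) = d'(\Phi_*\gamma)$, the conditions of that theorem translate verbatim into items~(2) and~(3) of the present statement, with the equality $w_{\eta_d}(c_i) = d(c_i)$ matching $w_{\Phi^*\eta_{d'}}(c_i) = d'(c'_i)$.

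The real content of the proof is to check that the conditions in~(3) are independent of the particular choice of symplectic bases~$\cG$ and~$\cG'$, since Theorem~\ref{thmAG} supplies only some~$\Phi$, not one matching prescribed bases. For genus one I would invoke Kawazumi's reformulation (already used in the proof of Theorem~\ref{thm::LP+main}) of the gcd as one taken over \emph{all} non-separating simple closed curves together with the boundary curves shifted by~$2$, making it intrinsic. For genus at least two, (3)(b)(i) and~(3)(b)(ii) depend only on parities of winding numbers and on $d(c_i) \bmod 4$, which by Proposition~\ref{propLF}(3)--(4) are homotopy invariants of~$\eta_d$; and in case~(3)(b)(iii) the sum $\sum_{i=1}^g \bigl(\tfrac12 d(\alpha_i)+1\bigr)\bigl(\tfrac12 d(\beta_i)+1\bigr) \bmod 2$ is, by Lemma~\ref{lemma::quadratic} applied with $\eta_d$ in place of $\eta(\Delta^*)$, equal to the Arf invariant of the induced quadratic form $\overline q$ on $H_1(\overline{\Sigma}, \bZ/2)$, and hence basis-independent by the classical theory of Arf.

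The main obstacle is thus not any new geometric argument but the careful verification of this basis-independence and of the parity/modular reductions; once this is in place, the theorem is a bookkeeping translation of Theorem~\ref{thmAG}, Lemma~\ref{lemma d eta}, and Theorem~\ref{thmLP}, parallel to the argument used for Theorem~\ref{thm::LP+main}.
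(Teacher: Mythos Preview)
Your proposal is correct and follows exactly the approach the paper intends: the paper does not give an explicit proof of this theorem, but indicates that it is obtained by combining Theorem~\ref{thmAG} with Lemma~\ref{lemma d eta} and Theorem~\ref{thmLP}, in parallel with the proof of Theorem~\ref{thm::LP+main}. Your write-up is in fact more detailed than what the paper provides, and the basis-independence verification you spell out is precisely the one carried out in the proof of Theorem~\ref{thm::LP+main}.
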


\begin{remark}The case $g=0$ was already treated in \cite{AmiotGrimeland}, and the case $g=1$ and $b=1$ was stated in \cite{Amiot}. In this case the degree of the unique curve $c \in \cB$ is always $-2$ (see Proposition 2.9 in \cite{AmiotGrimeland}), and so $\gcd\{d(\alpha),d(\beta), d(c)+2\}=\gcd (d(\alpha), d(\beta))$ and we recover Theorem 3.1 in \cite{Amiot}. 
\end{remark}

We end this section with  an example.

Let $(\Sigma,\cM)$ be a surface of genus $2$ with  one boundary component and one marked point. A symplectic basis is drawn in blue.

\[\scalebox{0.8}{
\begin{tikzpicture}[>=stealth,scale=0.5]

\draw[fill=blue!10] (0,0)--node [rotate=90]{$>$} (0,3)--node [rotate=45]{$>\!\!>$}(2,5)--node {$<$}(5,5)--node[rotate=135]{$>\!\!>$}(7,3)--node {$\triangle$}(7,0)--node [rotate=45]{$>\!\!\! |$}(5,-2)--node [rotate=90]{$\triangle$}(2,-2)--node [rotate=135]{$>\!\!\! |$}(0,0);

\node at (0,3) {$\bullet$};
\node at (2,5) {$\bullet$};\node at (5,5) {$\bullet$};
\node at (7,3) {$\bullet$};\node at (7,0) {$\bullet$};
\node at (5,-2) {$\bullet$};\node at (2,-2) {$\bullet$};

\draw[fill=white] (0,0)..controls (2,2) and (2,-1)..(0,0);
\node at (0,0) {$\bullet$};

\draw[blue] (0,2)--node [rotate=45]{$>$}(3,5);
\draw[blue] (6,4)--node {$<$}(1,4);

\draw[blue] (1,-1)-- node {$>$}(6,-1);
\draw[blue] (7,1)--node[rotate=45]{$<$}(4,-2);

\draw[magenta] (0,0.5)..controls (2,2) and (3,-0.5)..node [rotate=-47]{$>$}(0.5,-0.5);

\draw[magenta] (0,2.5) arc (-90:45:0.5);
\draw[magenta] (2.5,5) arc (0:-135:0.5);
\draw[magenta] (4.5,5) arc (-180:-45:0.5);
\draw[magenta] (7,2.5) arc (-90:-225:0.5);
\draw[magenta] (7,0.5) arc (90:225:0.5);
\draw[magenta] (4.5,-2) arc (180:45:0.5);
\draw[magenta] (2.5,-2) arc (0:135:0.5);

\end{tikzpicture}}\]

Consider the following triangulation with cuts, where the cuts are indicated in blue. The quiver of the corresponding algebra is given in the picture to the right. The ideal of relations is generated by the composition of two arrows belonging to the same triangle.

\[\scalebox{1}{
\begin{tikzpicture}[>=stealth,scale=0.5]

\draw[fill=blue!10] (0,0)--(0,3)--(2,5)--(5,5)--(7,3)--(7,0)--(5,-2)--(2,-2)--(0,0);

\node at (0,3) {$\bullet$};
\node at (2,5) {$\bullet$};\node at (5,5) {$\bullet$};
\node at (7,3) {$\bullet$};\node at (7,0) {$\bullet$};
\node at (5,-2) {$\bullet$};\node at (2,-2) {$\bullet$};

\draw[fill=white] (0,0)..controls (2,2) and (2,-1)..(0,0);
\node at (0,0) {$\bullet$};

\draw (0,0)--(2,5);
\draw (0,0)--(5,5);
\draw (0,0)..controls (2,2) and (7,3)..(7,3);

\draw (0,0)..controls (2,-1.5) and (7,3)..(7,3);
\draw (2,-2)--(7,3);
\draw (5,-2)--(7,3);

\draw[blue] (0,1.5) arc (90:70:1.5);
\draw[blue] (2.5,5) arc (0:-110:0.5);
\draw[blue] (6,4) arc (135:195:1.4);
\draw[blue] (3,-2) arc (0:135:1);
\draw[blue] (6,-1) arc (45:70:1.4);

\node at (3.5,-3) {$\Lambda_0= (\Delta_0,d_0)$};

\begin{scope}[xshift=10cm]

\draw[dotted] (0,0)--node (A1)[fill=white, inner sep=0pt] {$1$} (0,3)--node (A2)[fill=white, inner sep=0pt] {$2$} (2,5)--node (B1) [fill=white, inner sep=0pt] {$1$} (5,5)--node (B2) [fill=white, inner sep=0pt] {$2$} (7,3)--node (A3)[fill=white, inner sep=0pt] {$3$} (7,0)--node (A4)[fill=white, inner sep=0pt] {$4$} (5,-2)--node (B3) [fill=white, inner sep=0pt] {$3$} (2,-2)--node (B4)[fill=white, inner sep=0pt] {$4$} (0,0);

\draw[dotted] (0,0)--node (A5)[fill=white, inner sep=0pt] {$5$}(2,5);
\draw[dotted] (0,0)--node (A6)[fill=white, inner sep=0pt] {$6$}(5,5);
\draw[dotted] (0,0)..controls (2,2) and (7,3)..node (A7)[fill=white, inner sep=0pt] {$7$}(7,3);

\draw[dotted] (0,0)..controls (2,-1.5) and (7,3)..node (A8)[xshift=-1cm,yshift=-0.5cm,fill=white, inner sep=0pt] {$8$}(7,3);
\draw[dotted] (2,-2)--node (A9)[fill=white, inner sep=0pt] {$9$}(7,3);
\draw[dotted] (5,-2)--node (A10)[fill=white, inner sep=0pt] {$10$}(7,3);

\draw[thick,->] (A1)--(A2);
\draw[thick,->] (A2)--(A5);
\draw[thick,->] (B1)--(A6);
\draw[thick,->] (A6)--(A5);
\draw[thick,->] (A7)--(A6);
\draw[thick,->] (A6)--(B2);
\draw[thick,->] (A7)--(A8);
\draw[thick,->] (B4)--(A8);
\draw[thick,->] (A8)--(A9);
\draw[thick,->] (A9)--(A10);
\draw[thick,->] (A10)--(B3);
\draw[thick,->] (A10)--(A3);
\draw[thick,->] (A3)--(A4);

\node at (3.5,-3) {$\Lambda_0$};

\end{scope}

\end{tikzpicture}}\]

One then easily computes the degree of each generator of the fundamental group and gets:

\[d_0(c)=-6,\ d_0(\alpha_1)=0,\ d_0(\beta_1)=0, \ d_0(\alpha_2)=2\ \textrm{and }d_0(\beta_2)=0.\]

All the numbers are even, and $d_0(c)=2 \ \mod 4$ so we are in the case (3)(c) of the theorem. The derived equivalence class of this algebra is determined by the Arf invariant, which is $\rm{Arf}(d_0)=1$ in this case.

Now take the algebras given by the following two triangulations with cuts:
\[\scalebox{1}{
\begin{tikzpicture}[>=stealth,scale=0.5]

\draw[fill=blue!10] (0,0)--(0,3)--(2,5)--(5,5)--(7,3)--(7,0)--(5,-2)--(2,-2)--(0,0);

\node at (0,3) {$\bullet$};
\node at (2,5) {$\bullet$};\node at (5,5) {$\bullet$};
\node at (7,3) {$\bullet$};\node at (7,0) {$\bullet$};
\node at (5,-2) {$\bullet$};\node at (2,-2) {$\bullet$};

\draw (0,0)--(2,5);
\draw (0,0)--(5,5);
\draw (0,0)..controls (2,2) and (7,3)..(7,3);

\draw (0,0)..controls (2,-1.5) and (7,3)..(7,3);
\draw (2,-2)--(7,3);
\draw (5,-2)--(7,3);

\draw[fill=white] (0,0)..controls (2,2) and (2,-1)..(0,0);
\node at (0,0) {$\bullet$};

\draw[blue] (0,1.5) arc (90:70:1.5);
\draw[blue] (2.5,5) arc (0:-110:0.5);
\draw[blue] (6,4) arc (135:195:1.4);
\draw[blue] (1,-1) arc (-45:-10:1.4);
\draw[blue] (4,-2) arc (180:70:1);
\draw[blue] (7,1.5) arc (-90:-110:1.5);

\node at (3.5,-3) {$\Lambda_1=(\Delta_0,d_1)$};

\begin{scope}[xshift=10cm]

\draw[fill=blue!10] (0,0)--(0,3)--(2,5)--(5,5)--(7,3)--(7,0)--(5,-2)--(2,-2)--(0,0);

\node at (0,3) {$\bullet$};
\node at (2,5) {$\bullet$};\node at (5,5) {$\bullet$};
\node at (7,3) {$\bullet$};\node at (7,0) {$\bullet$};
\node at (5,-2) {$\bullet$};\node at (2,-2) {$\bullet$};

\draw (0,0)--(2,5);
\draw (0,0)--(5,5);
\draw (0,0)..controls (2,2) and (7,3)..(7,3);

\draw (0,0)..controls (2,-1.5) and (7,3)..(7,3);
\draw (2,-2)--(7,3);
\draw (2,-2)--(7,0);

\draw[fill=white] (0,0)..controls (2,2) and (2,-1)..(0,0);
\node at (0,0) {$\bullet$};

\draw[blue] (4.5,-2) arc (180:45:0.5);
\draw[blue] (1.5,-1.5) arc (135:35:1);

\draw[blue] (1,4) arc (-135:-110:1.4);
\draw[blue] (4,5) arc (-180:-45:1);

\node at (3.5,-3) {$\Lambda_2=(\Delta_2,d_2) $};

\end{scope}

\end{tikzpicture}}\]

In these two cases,  we are again in case $(c)$. One computes that ${\rm Arf}(d_1)=0$ and ${\rm Arf}(d_2)=0$. Hence $\Lambda_1$ and $\Lambda_2$ are derived equivalent (even if they do not come from the same triangulation), while $\Lambda_0$ and $\Lambda_1$ are not (even if they  come from the same triangulation).

\section*{Acknowledgements}
The first author would like to thank Pierre Dehornoy for interesting discussions on winding numbers and for pointing us to reference \cite{Chillingworth}.
We thank Bernhard Keller and Henning Krause for interesting discussions.
We also thank Gustavo Jasso and Sondre Kvamme for pointing our the necessity of proving Proposition~\ref{prop::maximal-presilting-are-silting}.  Some results in this paper were presented at the ARTA 7 in M\'exico in September 2018 and at the workshop \emph{Stability conditions and representation theory of finite-dimensional algebras} in Oaxaca in October--November 2018.  We would like to thank the organizers of both events.


\bibliographystyle{alpha}
\bibliography{gentleSilting}

\end{document}